\newcommand{\numQ}[1]{\num[
  fixed-exponent = 3,
  round-precision = 12,
  round-mode=figures,
  group-minimum-digits = 4,
  group-separator = {\,},
  drop-zero-decimal = true,
  table-format = 5.0]{#1}}
\newcolumntype{N}{>{\collectcell\numQ}r<{\endcollectcell}}
\newcommand{\ca}[1]{\multicolumn{1}{r}{{\color{green!60!black}\underline{\numQ{#1}}}}}
\newcommand{\caK}[1]{\multicolumn{1}{r@{~K~~}}{{\color{green!60!black}\underline{\numQ{#1}}}}}
\numberwithin{equation}{section}
\pgfplotsset{
  compat=1.15,
  cycle list/Set1-5,
  PPk/.style={mark=square*},
  TTk/.style={mark=*},
  TTkw/.style={mark=triangle*},
  TTkemb/.style={mark=diamond*},
}
\newcommand{{\graphicspath{{./plots/}} \input{plots/}}}[1]{{\graphicspath{{./plots/}} \input{plots/#1}}}
\newcommand{\Th}{{\mathcal{T}_h}}
\newcommand{\Fh}{{\mathcal{F}_h}}
\newcommand{\Fhi}{{\mathcal{F}_h^{\text{i}}}}
\newcommand{\TT}{\mathbb{T}}
\newcommand{\X}{\mathbb{X}}
\providecommand{\HH}{\mathbb{H}}
\providecommand{\LL}{\mathbb{L}}
\newcommand{\RR}{\mathbb{R}}
\newcommand{\PP}{\mathbb{P}}
\newcommand{\BDM}{\mathbb{B}\mathbb{D}\mathbb{M}}
\newcommand{\cP}{\mathcal{P}}
\newcommand{\dof}{\texttt{dof}}
\newcommand{\dofs}{\texttt{dofs}}
\newcommand{\ndofs}{\texttt{ndofs}}
\newcommand{\dom}{{\Omega}}
\newcommand{\Kh}{K_h}
\newcommand{\dx}{\mathop{\mathrm{d} x}}
\DeclareMathOperator{\id}{id}
\DeclareMathOperator{\Div}{\mathrm{div}}
\DeclareMathOperator{\Grad}{\nabla}
\DeclareMathOperator{\Curl}{\mathrm{curl}}
\newcommand\restr[2]{{\left.\kern-\nulldelimiterspace #1 \right|_{#2} }}
\newcommand{\avg}[1]{\{\!\!\{ #1\}\!\!\}}
\newcommand{\jmp}[1]{[\![#1]\!]}
\newcommand{\jump}[1]{[\![#1]\!]}
\newcommand{\nrm}[1]{\Vert #1 \Vert}
\newcommand{\snrm}[1]{\vert #1 \vert}
\newcommand{\IP}{\mathbb{P}}
\newcommand{\IR}{\mathbb{R}}
\newcommand{\IT}{\mathbb{T}}
\newcommand{\calH}{\mathcal{H}}
\newcommand{\calL}{\mathcal{L}}
\newcommand{\calP}{\mathcal{P}}
\newcommand{\bl}{\mathbf{l}}
\newcommand{\bK}{\mathbf{K}}
\newcommand{\bT}{\mathbf{T}}
\newcommand{\bW}{\mathbf{W}}
\renewcommand{\part}{{\textrm{par}}}
\renewcommand{\hom}{{\textrm{hom}}}
\pgfplotsset{
    discard if not/.style 2 args={
        x filter/.append code={
            \edef\tempa{\thisrow{#1}}
            \edef\tempb{#2}
            \ifx\tempa\tempb
            \else
                
            \fi
        }
    }
}
\renewcommand{\hat}[1]{\widehat{#1}}
\theoremstyle{remark}%
\newtheorem{theorem}{Theorem}
\newtheorem{lemma}[theorem]{Lemma}
\newtheorem{cor}[theorem]{Corollary}
\newtheorem{remark}{Remark}%
\theoremstyle{definition}%
\begin{document}
\title[Trefftz-DG for Stokes]{Trefftz Discontinuous Galerkin discretization for the Stokes problem}


\author[1]{\fnm{Philip L.} \sur{Lederer}}\email{p.l.lederer@utwente.nl}

\author[2]{\fnm{Christoph} \sur{Lehrenfeld}}\email{lehrenfeld@math.uni-goettingen.de}

\author[2]{\fnm{Paul} \sur{Stocker}}\email{p.stocker@math.uni-goettingen.de}

\affil[1]{\orgdiv{Department of Applied Mathematics}, \orgname{University of Twente}, \orgaddress{\street{Hallenweg 19}, \city{Enschede}, \postcode{7522NH}, \country{The Netherlands}}}

\affil[2]{\orgdiv{Institute for Numerical and Applied Mathematics}, \orgname{University of Göttingen}, \orgaddress{\street{Lotzestr. 16-18}, \city{Göttingen}, \postcode{37083}, \state{State}, \country{Germany}}}


\abstract{We introduce a new discretization based on a polynomial Trefftz-DG
    method for solving the Stokes equations. Discrete solutions of 
    this method fulfill the Stokes equations pointwise within
    each element and yield element-wise divergence-free solutions.
    Compared to standard DG methods, a strong reduction of the degrees
    of freedom is achieved, especially for higher polynomial
    degrees. In addition, in contrast to many other Trefftz-DG
    methods, our approach allows us to easily incorporate inhomogeneous
    right-hand sides (driving forces) by using the concept of the
    embedded Trefftz-DG method. On top of a detailed a priori error
    analysis, we further
    compare our approach to other (hybrid) discontinuous Galerkin
    Stokes discretizations and present numerical examples. 
    }

\keywords{Trefftz, Discontinuous Galerkin, Stokes equations} 


\pacs[MSC Classification]{76D07, 76M10, 65N12, 65N22, 65N30}

\maketitle
\section{Introduction}
Trefftz methods, originating from the work of E. Trefftz
\cite{trefftz1926}, use approximation spaces that lie in the kernel of
the target differential operator. 
Compared to standard approximation spaces, e.g. polynomial spaces, the corresponding Trefftz approximation spaces are constructed in such a way that they still have similar approximation properties as the original space, but, due to the kernel property, have a significantly reduced number of degrees of freedom.
Trefftz methods provide a natural way to reduce
degrees of freedom in discontinuous Galerkin (DG) methods by replacing
the element-wise basis functions of the DG method by Trefftz basis
functions. 

Trefftz-DG methods have been derived and analyzed for several partial
differential equations (PDEs), usually under the common restrictions of
a zero source term and piecewise constant coefficients, see for
example the Laplace equation treated in \cite{HMPS14},
as
for Maxwell's equation see e.g.\cite{EKSW15, Huttunen}, Schr\"odinger
equation \cite{2306.09571,21M1426079}, the acoustic wave equation
\cite{mope18,bgl2016}, elasto-acoustics~\cite{bcds20}, versions
related to the `ultra-weak variational formulation' see also
\cite{SpaceTimeTDG,mope18,bcds20,KSTW2014,EKSW15,tpwave}.
For Trefftz schemes for Helmholtz we refer to the survey
\cite{TrefftzSurvey} and the references therein.

So far, the Trefftz-DG methodology has been restricted to a small set
of particular PDEs due to the need to explicitly construct basis
functions for the corresponding PDE-dependent Trefftz-DG spaces and
the restrictions mentioned above.  
Recently, a simple way to circumvent the explicit construction of
Trefftz spaces for the Trefftz-DG method has been introduced with the
embedded Trefftz-DG method in \cite{2201.07041}. This approach allows
an easy construction of Trefftz-DG spaces for a broad class of PDEs,
e.g. problems with differential operators that have not been
considered in the Trefftz-DG literature before or PDEs with varying
coefficients. Further, the approach allows for a generic construction
of particular solutions to handle inhomogeneous source terms.

Trefftz methods for the Stokes equations that can be found in the
literature are spectral-type methods. Explicit Trefftz basis functions
have been constructed in \cite{POITOU2000561,Bouberbachene}. In
\cite{LifitsQTSM} a so-called `quasi-Trefftz spectral-method' is
presented, that solves an eigenvalue problem on an encompassing domain
to construct Trefftz-like functions and can also treat the Stokes
problem with a source term. In \cite{21710} the Stokes problem is
considered in two dimensions and singular solutions are introduced
into a collocation Trefftz methods to deal with corner singularities.

While there is, to the best of our knowledge, no Trefftz-DG method for
the Stokes equations in the literature, standard DG discretizations
for the Stokes equations have been well-established for decades. As one
way to reduce the computational costs of DG methods, hybridization,
see \cite{cockburn2009unified}, has become very popular leading to the
class of hybrid DG methods; with several hybrid DG methods developed
also for the Stokes equations. However, in the search for efficient
discretizations based on DG formulations, Trefftz-DG and hybrid DG
methods are incompatible. By \emph{incompatible} we don't mean that a
combination of both methods is impossible, but that a combination of
both does not give any significant improvement over one of the two
basic methods. 
That being said, instead of reducing the number of basis functions, 
non-polynomial Trefftz type functions have been used successfully 
in a hybrid setting to enrich the element-wise basis in \cite{farhat2001}.
Since we focus on Trefftz-DG methods in this work we
skip the large amount of literature on Stokes discretizations based on
hybrid DG formulations in this introduction. Note, however, that
several (hybrid) DG formulations are discussed in
Section~\ref{sec:numbercrunching}.

\paragraph*{Main contributions and structure of the article}
In this work, we extend the (embedded) Trefftz-DG methodology to the
Stokes problem. We introduce a DG method with local basis functions
that are polynomials and solve the Stokes problem pointwise. This
extends the approach introduced in
\cite{montlaur2010discontinuous,montlaur08} where a DG method using
element-wise divergence-free polynomials are presented for the Stokes
and Navier-Stokes equations. In view of the Trefftz methodology the
approach in \cite{montlaur2010discontinuous,montlaur08} can be seen as
a Trefftz-DG method with respect to the mass conservation equation,
while the Trefftz-DG discretization considered in this paper includes
also the momentum equation in the construction of the Trefftz-DG
space. 

The analysis of the discretization reveals that higher-order pressure
functions are locally determined by the velocity and only the
piecewise constant pressures appear as Lagrange multipliers in the
Stokes saddle point problem. We prove the stability of the saddle point
problem and derive a priori error bounds in the energy as well as the
$L^2$-norm of the velocity. In contrast to most other works on
Trefftz-DG methods we allow inhomogeneous right-hand side source terms
in the method and its analysis. While the method for treating
inhomogeneous right-hand sides have already been introduced in
\cite{2201.07041}, to the best of our knowledge, this is the first
time a rigorous error analysis is provided for this generic approach
in dealing with inhomogeneities.

The article proceeds as follows: We start with some preliminaries in
\cref{sec:notation} and then introduce a DG and the corresponding
Trefftz-DG method for the Stokes problem in \cref{sec:tdgstokes}. The
analysis is carried out in \cref{sec:analysis}. Numerical results are
presented in \cref{sec:num}. Final comments are given in \cref{sec13}.

\section{Preliminaries}\label{sec:notation} Consider an open bounded
Lipschitz domain $\Omega \subset \mathbb{R}^d$ with $d=2, 3$. The
Stokes equations determine a velocity $u$ and a pressure $p$ such that
\begin{subequations}\label{eq:basicpde}
\begin{alignat}{2} 
  -\nu  \Delta u + \nabla p & = f && \quad \textrm{in } \Omega, \label{eq::stokes-a} \\
  -\Div u &=g && \quad \textrm{in } \Omega, \label{eq::stokes-b}\\
  u &= 0 &&\quad \textrm{on } \partial \Omega, \label{eq::stokes-c}
\end{alignat}
\end{subequations}
where $f, g$ are external body forces and $\nu > 0$ is the dynamic
viscosity. 
For the ease of presentation we only consider homogeneous Dirichlet boundary conditions. 
The method (and its analysis) can be extended to more general boundary conditions using standard DG techniques, as demonstrated in the numerical examples. 
The weak formulation of the problem \eqref{eq:basicpde} is
then given by: Find $(u,p)\in [H^1_0(\Omega)]^d \times L^2_0(\Omega)$
such that
\begin{equation}\label{eq:weakbasicpde}
\begin{aligned}
\int_\Omega \nu\!~\nabla u\!:\!\nabla v \dx - \int_\Omega \Div v\!~p \dx &= \int_\Omega f\cdot v \dx && \forall v\in [H^1_0(\Omega)]^d,  \\
-\int_\Omega  \Div u\!~q \dx&= \int_\Omega g q \dx  && \forall q\in L^2_0(\Omega),
\end{aligned}
\end{equation}
where $L^2_0(\Omega)$ is the space of square-integrable functions with
a zero mean value. The (Trefftz-) DG formulation introduced in this
work is set on a sequence of shape regular simplicial triangulations
$\Th$ of a polygonal domain $\dom$. We denote by $\Fh$ the set of
facets in the mesh $\Th$ (edges/faces in 2d/3d, respectively) and define $\Fhi$ as the subset of interior
facets.
With respect to a triangulation $\Th$ we denote by $h$ the piecewise constant field of local mesh sizes with $h|_T = h_T = \operatorname{diam}(T)$ for $T \in \Th$. 
With abuse of notation, when $h$ is evaluated on facets $F \in \Fh$ we set $h|_F = h_F = \operatorname{diam}(F)$,
and also denote $h = \max_{T\in\Th} h_T$ as the
maximum mesh size when $h$ appears as a scalar quantity.

We denote by $\cP^k(S)$ the space of polynomials up to degree $k$ on
an domain $S$ and set $\cP^k = \cP^k(\RR^d)$. 
Further, for $k < 0$ we set $\cP^k = \{0\}$.
With $\IP^k(\Th)$ and
$\IP^k(\Fh)$ we denote the broken, i.e. element- or facet-wise
polynomial space, such that for instance for $v\in\IP^k(\Th)$ it holds
$\restr{v}{T}\in\cP^k(T),$ for all $T\in\Th$. We use an according
notation for other broken function spaces as well, e.g. $H^2(\Th)$
denotes the space of functions with a local element-wise $H^2$
regularity. Due to its frequent appearance, we abbreviate the broken
spaces on the mesh $\Th$ by setting $\IP^k = \IP^k(\Th)$ (and similarly for
other broken function spaces) when the mesh is clear from the context.

The $L^2(S)$ inner product over a domain $S$ is denoted by
$(\cdot,\cdot)_S$. Specifically, we introduce the following notation for certain inner products:
\begin{align*}
(\cdot,\cdot)_\Th & \quad\text{sum of element-wise $L^2$-inner products over all mesh elements,}  \\
(\cdot,\cdot)_{\partial \Th} & \quad\text{sum of facet-wise $L^2$-inner products over the boundary of mesh elements,}   \\
(\cdot,\cdot)_\Fh &\quad\text{sum of facet-wise $L^2$-inner products over the facets of the mesh.}
\end{align*}
The unit outer normal is denoted by $n$. Correspondingly we use
$\nrm{\cdot}_S$ for the $L^2$-norm on a domain or set of
elements/facets $S$ and define $\nrm{\cdot}_0 \coloneqq
\nrm{\cdot}_\Th$.  

By $\Pi^k_S: L^2(S) \to \PP^k(S)$ we denote the $L^2$ projection into
the scalar-valued polynomial space of order $k$ on $S$. With abuse of
notation we use the same symbol for vectorial $L^2$-projections and
denote by $\Pi^k$ the element-wise $L^2$-projection on $\PP^k(\Th)$ or
$[\PP^k(\Th)]^d$, respectively.

Let $T$ and $T'$ be two neighboring elements sharing a common facet $F
\in \Fhi$ where $T$ and $T'$ are uniquely ordered. On $F$ the
functions $u_{T}$ and $u_{T'}$ denote the two limits of a discrete
function from the different sides of the element interface. The
vectors $n_T$ and $n_{T'}$ are the unit outer normals to $T$ and $T'$.
For $F \in \Fhi$ we then define the jump and the mean value by
\begin{equation*}
    \jump{v} := v_{T} - v_{T'}, \qquad \text{and} \qquad
    \avg{v} := \frac12 v_{T} + \frac12 v_{T'},
\end{equation*}
while on facets on the domain boundary we set $\jump{v} = \avg{v} =
v$.

Finally, we use the notation $A \simeq B$ when there are constants
$c,C > 0$ independent of $h$ and $\nu$ such that $A \le C B$ and $ B
\le c A$. Similarly, we also use the symbols $\lesssim$ and $\gtrsim$.

\section{Trefftz-DG for Stokes}\label{sec:tdgstokes} In this section
we derive the Trefftz-DG formulation for the Stokes problem. For that,
we first introduce a standard DG method and then include the Trefftz
spaces in the formulation. In \cref{sec:impl} we show a possible way
to implement the Trefftz spaces and construct local particular
solutions to treat the inhomogeneous problem using the embedded
Trefftz-DG method, see \cite{2201.07041}.

\subsection{The underlying DG Stokes discretization} \label{sec:underlyingDG}
As a basis for the following, we consider the established symmetric interior penalty DG discretization of the Stokes problem \eqref{eq:basicpde}, cf. e.g. \cite[Section 6.1.5]{di2011mathematical}: \\
Find $(u_h, p_h) \in [\PP^k]^d \times \PP^{k-1}/\,\RR$, s.t.
\begin{subequations}\label{eq:dgstokes}
\begin{align}
a_h(u_h,v_h) &+ b_h(v_h,p_h) &\hspace*{-1cm}&= (f,v_h)_{\Th} && \forall ~ v_h \in [\PP^k]^d,\\
  b_h(u_h,q_h) &  &\hspace*{-1cm}&= (g,q_h)_{\Th} && \forall ~ q_h \in \PP^{k-1} /\, \RR,
  \end{align}
\end{subequations}
with the bilinear forms
\begin{align*}
  a_h(u_h,v_h) &\coloneqq (\nu\! \Grad\! u_h,\Grad\! v_h)_\Th\! - (\avg{ \nu \partial_n u_h },\jmp{v_h})_\Fh\! - (\avg{\nu \partial_n v_h},\jmp{u_h})_\Fh  \\ & \qquad \!+ \frac{\alpha \nu }{h} (\jmp{u_h},\jmp{v_h})_\Fh, \\ 
b_h(v_h,p_h) &\coloneqq -(\Div v_h, p_h)_{\Th} + (\jmp{v_h \cdot n}, \avg{p_h})_{\Fh},
\end{align*}
where the interior penalty parameter $\alpha = \mathcal{O}(k^2)$ is
chosen sufficiently large and we used the notation $\partial_n w \coloneqq \nabla w \cdot
n$.

In this work, we want to introduce a Trefftz-DG method by reducing the
finite element space to a subspace, the Trefftz-DG space. For the
underlying DG space, we introduce the notation $\X^k_h(\Th) =
[\PP^k(\Th)]^d \times \PP^{k-1}(\Th) /\, \RR$ (in short:
$\X^k_h \coloneqq [\PP^k]^d \times \PP^{k-1}/\,\RR$). We define $\X^k_h(T) \coloneqq [\PP^k(T)]^d \times
\PP^{k-1}(T)$ as a local version\footnote{Note that we do not factor out the constants for the local version.}. 
For simplicity, we omit the superscript $k$ and write
$\X^k_h = \X_h$ and $\X^k_h(T) = \X_h(T)$. Further, we introduce the
bilinear form $K_h$ on the product space $\X_h$ by
\begin{equation} \label{eq:K}
  \Kh((u_h,p_h),(v_h,q_h)) := a_h(u_h,v_h) + b_h(u_h,q_h) + b_h(v_h,p_h).
\end{equation}
Then \eqref{eq:dgstokes} also reads as: Find $(u_h,p_h) \in \X_h$ such that
\begin{align} \label{eq:DGK}
  \Kh((u_h,p_h),(v_h,q_h)) = (f,v_h)_{\Th} + (g,q_h)_{\Th} \quad \forall (v_h,q_h) \in \X_h.
\end{align}

\subsection{The Trefftz-DG Stokes discretization}
The main idea of the Trefftz-DG method is to select a proper (affinely
shifted) lower-dimensional subspace of $\X_h$ that allows to reduce
the computational costs of the underlying DG discretization without
harming the approximation quality too much. To this end, we pick the
affine subspace of $\X_h$ that fulfills the Stokes equations
\eqref{eq:basicpde} pointwise \textbf{inside each element} up to data
approximation:
\begin{equation}\label{eq:Trefftzconditions}
  \TT^k_{f,g}(\Th) \!:=\! \{ (u_h,p_h) \!\in\! \X^k_h(\Th) \!\mid\!   -\Delta u_{h} + \nabla p_{h}\!=\!\Pi^{k-2} f, \!-\Div u_{h} \!= \Pi^{k-1}\! g \text{ on } \Th\}. 
\end{equation}
Again, when the relation to the mesh $\Th$ is clear from the context, we will abbreviate $\TT^k_{f,g} = \TT^k_{f,g}(\Th)$. The definition of a local Trefftz space $\TT^k_{f,g}(T)$ is correspondingly based on $\X^k_h(T)$.

Note that we allow for the case $k=1$ for which the first constraint $-\Delta u_{h} + \nabla p_{h} = \Pi^{k-2} f$ is automatically fulfilled as $u_h$ is piecewise linear and $p_h$ is piecewise constant while the r.h.s. projection maps to zero. Only the constraint $- \Div u_h = \Pi^0 g$ remains non-trivial in this case.

To work with linear spaces we decompose $\TT^k_{f,g}$ into a (non-unique) particular solution to the \emph{Trefftz constraints}
$(u_{h}^\part,p_{h}^\part) \in \TT^k_{f,g}$ and a linear space
$\TT^k=\TT^k(\Th)=\TT^k_{0,0}(\Th)$, typically denoted as \emph{the
Trefftz-DG space}. Again we omit the superscript $k$ and simply write
$\TT^k_{f,g} = \TT_{f,g}$ and $\TT^k=\TT=\TT(\Th)=\TT_{0,0}(\Th)$.
We prove below in \cref{sec:dofs} that a particular solution always exists, cf. \cref{lem:surj}, and can be
constructed in a straight-forward manner, cf. \cref{sec:impl}.

This yields the \emph{Trefftz problem}: Find $(u_h,p_h) \in \TT_{f,g}(\Th)
=  \TT(\Th) + (u_{h}^\part,p_{h}^\part)$ such that
\begin{align} \label{eq:discreteTrefftz}
  \Kh((u_h,p_h),(v_h,q_h)) = (f,v_h)_{\Th} + (g,q_h)_{\Th}\quad \forall (v_h,q_h) \in \TT(\Th).
\end{align}
Equivalently, to highlight the homogenization, we can also write:
Find $(u_{h}^\hom,p_{h}^\hom)~\in~\TT(\Th)$ so that for all $(v_h,q_h) \in \TT(\Th)$ there holds 
\begin{align} \label{eq:discreteTrefftz0}
  \Kh((u_{h}^\hom,p_{h}^\hom),(v_{h},q_{h})) = (f,v_h)_{\Th} + (g,q_h)_{\Th} - \Kh((u_{h}^\part,p_{h}^\part),(v_{h},q_{h})).
\end{align}

\subsection{Trefftz constraints and the dimension of
\texorpdfstring{$\TT$}{T}} \label{sec:dofs} In this subsection we want
to discuss some properties of the Trefftz spaces $\TT_{f,g}$ and
$\TT$. First, as we will typically have $g=0$, we note that the
velocities in $\TT_{f,0}$ (and $\TT$) will be divergence-free on each
element with the additional constraint of $-\Delta u_h+\Grad p_h=
\Pi^{k-2} f$, making $\TT_{f,0}$ (and $\TT$) a subspace of the
solenoidal vector fields considered in~\cite{BJK90,montlaur2010discontinuous}.

Next, we prove that a particular solution to the Trefftz constraints
always exist. Furthermore, we state the dimension of the local Trefftz
space.
\begin{lemma} \label{lem:surj} The pointwise Stokes operator $\calL :
 [\cP^{k}(T)]^d \times \cP^{k-1}(T) \to [\cP^{k-2}(T)]^d \times
 \cP^{k-1}(T)$, $(v,q) \mapsto (-\Delta v + \nabla p, - \Div v)$ is
 surjective and the local Trefftz space on an element $T\in\Th$ has
 the dimension\footnote{
  The following expression is also valid for $k=1$ 
  if we set
   $\left(\begin{array}{c} \ell \\ m \end{array} \right) = 0$ for $\ell < m$. 
 }
\begin{align} \label{eq:dimTT}
\dim( \TT(T)) & = 
\dim(\X_h(T)) \!-\! \dim([\cP^{k-2}]^d) \!-\! \dim(\cP^{k-1})
\\ & =
d \Big(  \Big( \begin{array}{c} k + d \\ d \end{array} \Big) - \Big( \begin{array}{c} k- 2 + d \\ d \end{array} \Big)  \Big). \nonumber
\end{align}
\end{lemma}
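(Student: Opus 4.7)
The plan is to first establish surjectivity of $\calL$ and then deduce the dimension formula from the rank--nullity theorem, observing that $\TT(T) = \ker \calL$.

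For the surjectivity, given $(F,G) \in [\cP^{k-2}(T)]^d \times \cP^{k-1}(T)$, I would proceed constructively in three stages. First, the divergence map $\Div : [\cP^{k}(T)]^d \to \cP^{k-1}(T)$ is surjective; an explicit right inverse is $G \mapsto v_0 := (V_0, 0, \ldots, 0)$ with $V_0(x) = -\int_0^{x_1} G(s, x_2, \ldots, x_d)\, ds$, which is a polynomial of total degree $\le k$. Subtracting $v_0$ reduces the problem to the case $G = 0$ at the price of modifying $F$. Second, taking the divergence of $-\Delta v + \Grad q = F$ yields $\Delta q = \Div F$, which is solvable in $\cP^{k-1}(T)$ since $\Delta : \cP^{k-1}(T) \to \cP^{k-3}(T)$ is surjective; the residual $F' := F - \Grad q \in [\cP^{k-2}(T)]^d$ is then divergence-free. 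Third, we must find a divergence-free $v \in [\cP^{k}(T)]^d$ with $-\Delta v = F'$. Picking any $v_1 \in [\cP^{k}(T)]^d$ with $-\Delta v_1 = F'$ by componentwise surjectivity of the Laplacian, the scalar $s := \Div v_1 \in \cP^{k-1}(T)$ is automatically harmonic, since $-\Delta s = \Div F' = 0$; it then suffices to produce a harmonic $v_2 \in [\cP^{k}(T)]^d$ with $\Div v_2 = s$ and set $v = v_1 - v_2$.

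The main obstacle is this last sub-claim: that the divergence maps harmonic vector-valued polynomials of degree $\leq k$ onto harmonic scalars of degree $\leq k-1$. I would prove it by linearity after decomposing $s$ into its homogeneous components. For each harmonic component $s_m$ of degree $m \geq 1$, I make the ansatz
\begin{equation*}
  v_2 = \alpha\, x\, s_m + \beta\, |x|^2\, \Grad s_m.
\end{equation*}
Using Euler's identity for the homogeneous $s_m$ and $\Delta s_m = 0$, a direct computation gives
\begin{equation*}
  \Div v_2 = \bigl[\alpha(d+m) + 2\beta m\bigr]\, s_m,
  \qquad
  \Delta v_2 = 2\bigl[\alpha + \beta(d+2m-2)\bigr]\, \Grad s_m.
\end{equation*}
Imposing $\Div v_2 = s_m$ and $\Delta v_2 = 0$ yields a $2 \times 2$ linear system whose determinant factors as $(2m+d)(m+d-2)$, which is strictly positive for all $d \geq 2$ and $m \geq 1$, so $\alpha, \beta$ exist and are uniquely determined. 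The special case $m = 0$ (constant $s$) is settled by $v_2 = \tfrac{1}{d}\, x\, s$, which is manifestly harmonic with divergence $s$.

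With surjectivity of $\calL$ in hand, the rank--nullity theorem gives
\begin{equation*}
  \dim \TT(T) = \dim \X_h(T) - \dim\bigl([\cP^{k-2}(T)]^d\bigr) - \dim \cP^{k-1}(T) = d\binom{k+d}{d} - d\binom{k-2+d}{d},
\end{equation*}
after cancellation of $\dim \cP^{k-1}(T) = \binom{k-1+d}{d}$, which matches the value claimed in \eqref{eq:dimTT}.
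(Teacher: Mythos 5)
Your proof is correct, but it follows a genuinely different route from the paper's. The paper (for $d=3$) splits the target space via the vector identity $\Delta = \Grad\Div - \Curl\Curl$, writing $[\cP^{k-2}]^d = \Curl[\cP^{k-1}]^d + \Grad\cP^{k-1}$, matches the gradient part with the pressure, and handles the curl part using the exactness of the polynomial de Rham sequence (every divergence-free polynomial field is a curl); the two-dimensional case is only asserted to be analogous. You instead work dimension-independently: reduce to $G=0$ by an explicit antiderivative, determine $q$ from $\Delta q = \Div F$ so that the residual force is divergence-free, and then correct a particular Laplace preimage by a \emph{harmonic} vector field with prescribed harmonic divergence, constructed explicitly via the ansatz $\alpha\, x\, s_m + \beta\,|x|^2\Grad s_m$ on homogeneous harmonic components. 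I checked your computations: $\Div(x s_m)=(d+m)s_m$, $\Div(|x|^2\Grad s_m)=2m\,s_m$, $\Delta(x s_m)=2\Grad s_m$, $\Delta(|x|^2\Grad s_m)=2(d+2m-2)\Grad s_m$, and the determinant $(2m+d)(m+d-2)$ is indeed positive for $d\ge 2$, $m\ge 1$, with $m=0$ handled separately; the degree bookkeeping ($\deg v_2 = m+1\le k$) also works out. What your approach buys is a uniform treatment of all $d\ge 2$ without invoking curl potentials or the exact sequence; what it costs is the extra sublemma on harmonic liftings of harmonic divergences, and, like the paper, it still takes the surjectivity of the scalar Laplacian on polynomial spaces as a known fact. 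The concluding rank--nullity step and the cancellation of $\dim\cP^{k-1}(T)$ agree with \eqref{eq:dimTT}.
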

\begin{proof}
  We give the proof for $d=3$. The proof for $d=2$ follows similar lines. 
  To prove surjectivity, first note that $\Div [\cP^{k}]^d = \cP^{k-1}$ and hence it remains only to show that $\{-\Delta u_h + \nabla p_h \mid u_h\in[\cP^k]^d, p_h\in\cP^{k-1}, \Div u_h=0\}=[\cP^{k-2}]^d$. The task is hence to find $u_h \in [\cP^k]^d$ with $\Div u_h = 0$ and $p_h \in \cP^{k-1}$ to every $v_h\in [\cP^{k-2}]^d$ so that $-\Delta u_h + \nabla p_h = v_h$. We prove this in two steps.
\begin{enumerate}
    \item
    First we note that
    $[\cP^{k-2}]^d=\Delta [\cP^{k}]^d=\Curl\Curl[\cP^{k}]^d + \Grad\Div[\cP^{k}]^d,$
    where for the first equality we have used the surjectivity of the Laplace operator, see \cref{cor:lapsurj} in the appendix.
    Now using that  $\Curl[\cP^{k}]^d \subseteq [\cP^{k-1}]^d$ and $\Div[\cP^{k}]^d = \cP^{k-1}$ we get 
    \begin{equation*}
      [\cP^{k-2}]^d \subseteq \Curl[\cP^{k-1}]^d + \Grad\cP^{k-1}.
    \end{equation*}
    The other direction, $\Curl[\cP^{k-1}]^d + \Grad\cP^{k-1}
    \subseteq [\cP^{k-2}]^d$ is obvious and hence equality holds. We
    used the notation $V+W=\{v+w\mid v\in V,\ w\in W\}$ for the sum of
    two spaces. Note that this is not an orthogonal decomposition, and
    that the spaces considered here have a non-trivial intersection.
    In total this gives that for every $v_h\in [\cP^{k-2}]^d$ there is
    $v_1 \in [\cP^{k-1}]^d$ and $v_2 \in \cP^{k-1}$ so that $v_h =
    \Curl v_1 + \Grad v_2$.
    \item To match $\Grad v_2$ we can set $p_h = v_2$ and it remains to find $u_h \in [\cP^k]^d$ with $\Div u_h = 0$ so that $-\Delta u_h = \Curl v_1$. 
    To this end note that 
    \begin{equation*}
      \Curl [\cP^{k-1}]^d= \Curl \Delta [\cP^{k+1}]^d=  \Delta \Curl [\cP^{k+1}]^d = \{\Delta u_h\mid u_h\in[\cP^{k}]^d, \Div u_h=0\},
    \end{equation*}
    where we used that to every $u\in[\cP^{k}]^d$ with $\Div u=0$ we can find a $w\in[\cP^{k+1}]^d$ such that $u=\Curl w$ (and vice versa).

\end{enumerate}
\end{proof}

\begin{table}[!ht]
  \small
      \begin{tabular}{@{}r@{~}|@{~}r@{~}|@{~~}r@{~~}r@{~~}r@{~~}r@{~~}r@{~~}@{}|@{~}r@{~}|@{~~}r@{~~}r@{~~}r@{~~}r@{~~}r@{~~}r@{~~}r@{~~}r@{}}
      \toprule
        \multicolumn{7}{c}{$d=2$} & \multicolumn{6}{c}{$d=3$} \\
      \midrule
$k $   && $1$
    & $2$
    & $3$
    & $4$
    & $5$ &
    & $1$
    & $2$
    & $3$
    & $4$
    & $5$ \\
    $\dim \X_h(T)$ & $\frac32 k^2\!+\!\frac72 k\!+\!2$ & 7 & 15 & 26 & 40 & 57 
    & $\frac23 k^3\!+\!\frac72 k^2\!+\!\frac{35}{6} k\!+\!3$ & 13 & 34 & 70 & 125 & 203 \\
      $\dim \TT_{\hphantom{h}}(T)$ & $4 k \!+\! 2$& 6 & 10 & 14 & 18 & 22 
    & $3 k^2 + 6 k + 3 $&  12 & 27 & 48 & 75 & 108 \\
    \bottomrule
  \end{tabular}
  \caption{Dimensions of the local finite element spaces $\X_h(T)$ and $\TT(T)$.}\label{tab:localdims}
\end{table}

\begin{remark}
  We note that in the definition of the space $\X_h^k(\Th)$ -- which the Trefftz space $\TT^k(\Th)$ is a subspace of -- we factored out the (globally) constant pressure ($\mathbb{R}$). This however does not affect the local spaces $\X_h(T)$ and $\TT(T)$ which are defined without consideration of the global constant pressure. 
\end{remark}

~
\begin{remark}
  The formula derived in \eqref{eq:dimTT} shows the number of degrees
  of freedom per element (\ndofs) for the Trefftz-DG space. We observe
  that the \ndofs ~are exactly reduced by the number of scalar
  constraints that are imposed. The Stokes equations can be seen as
  the momentum equation formulated on the subspace of divergence-free
  functions where the pressure takes the role of the corresponding
  Lagrange multiplier. The costs of these additional unknowns are
  directly removed by the corresponding divergence-free constraint so
  that the \ndofs ~remaining coincide with those of harmonic vector
  polynomials up to degree $k$. Note however that the remaining system
  is still a saddle-point problem, cf. also \cref{rem:loworder}. Let
  us now take a look at the dimension reduction. While the dimension
  of $\X_h$ grows cubic and quadratic with respect to the polynomial
  degree $k$ for $d=3$ and $d=2$, respectively, the dimension of $\TT$
  grows only quadratic and linear for $d=3$ and $d=2$. In
  \cref{tab:localdims} we present concrete numbers for $k \in
  \{1,..,6\}$ to give an idea of the reduction. This dimension
  reduction gives the same complexity in $k$ that is also obtained in
  hybrid DG methods after static condensation.
\end{remark}

\begin{figure}[!ht]
    \centering
    \hspace*{-0.1\textwidth}
    \includegraphics[width=0.21\textwidth]{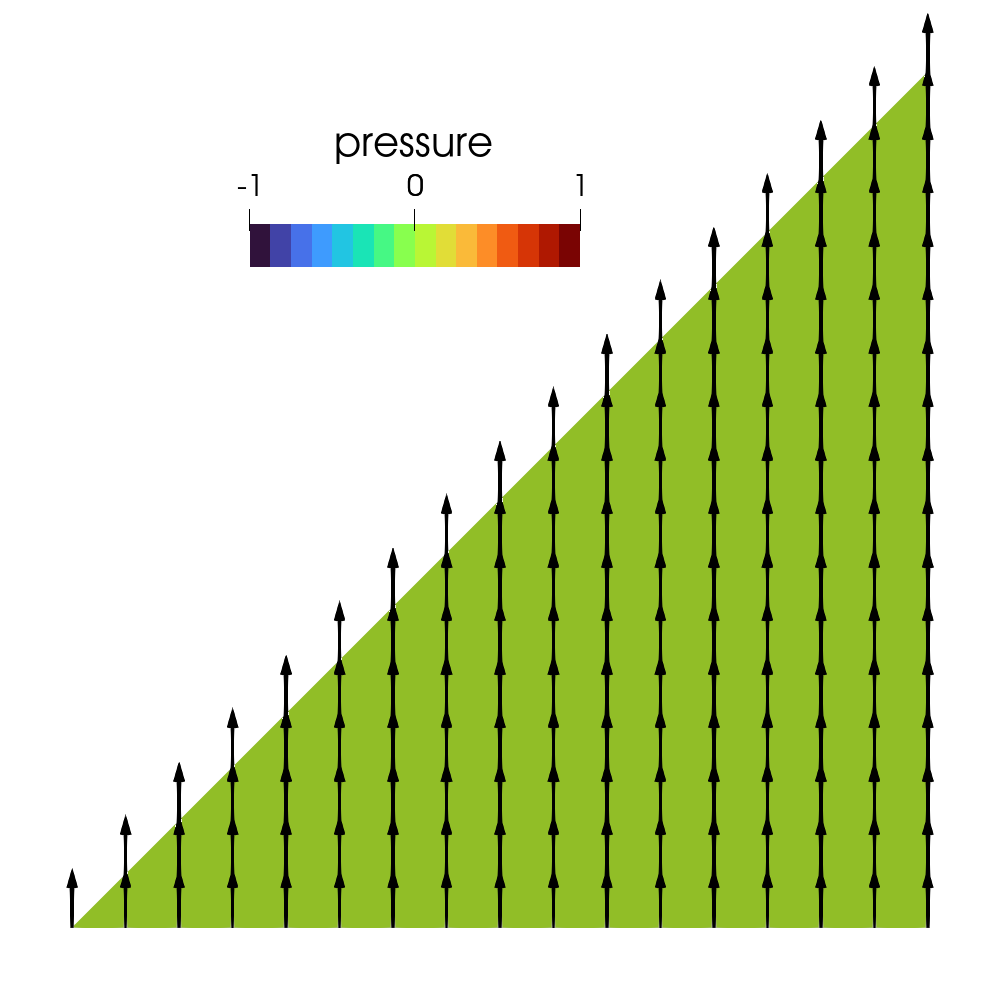} \hspace*{-0.03\textwidth}
    \includegraphics[width=0.21\textwidth]{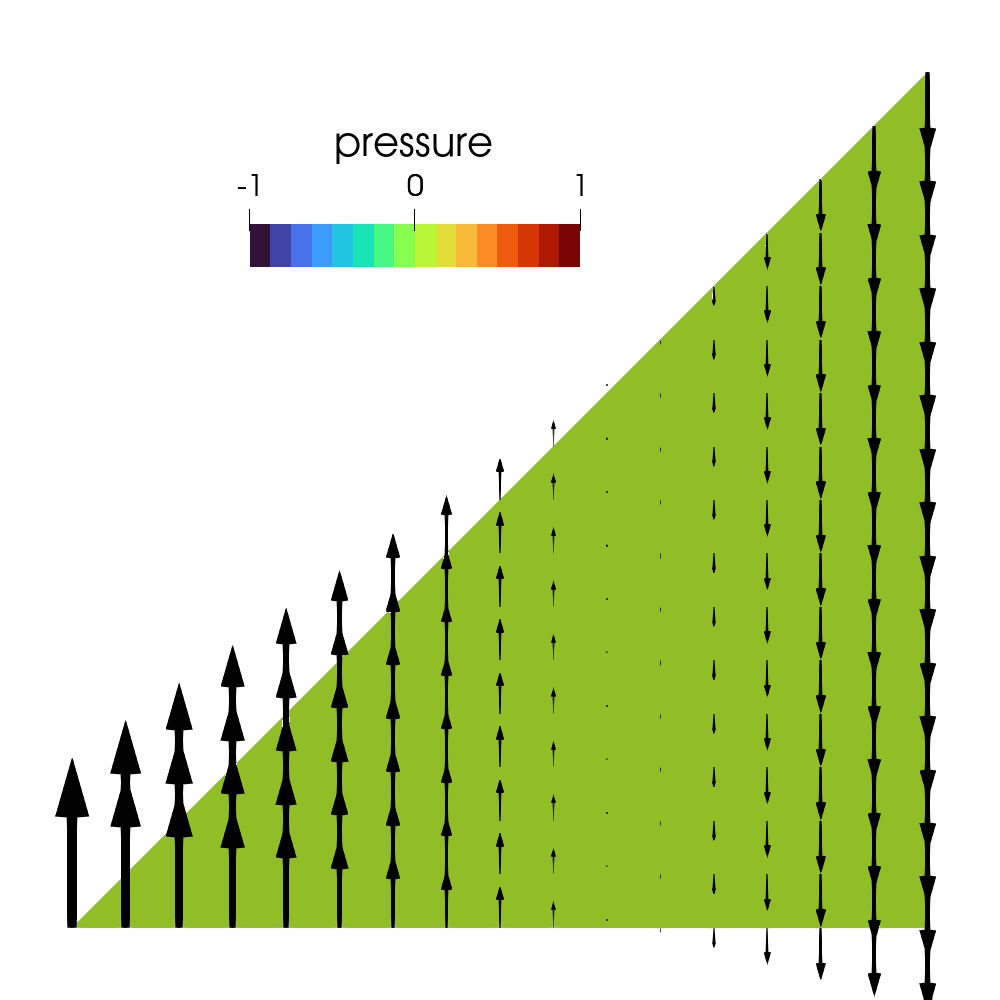} \hspace*{-0.03\textwidth}
    \includegraphics[width=0.21\textwidth]{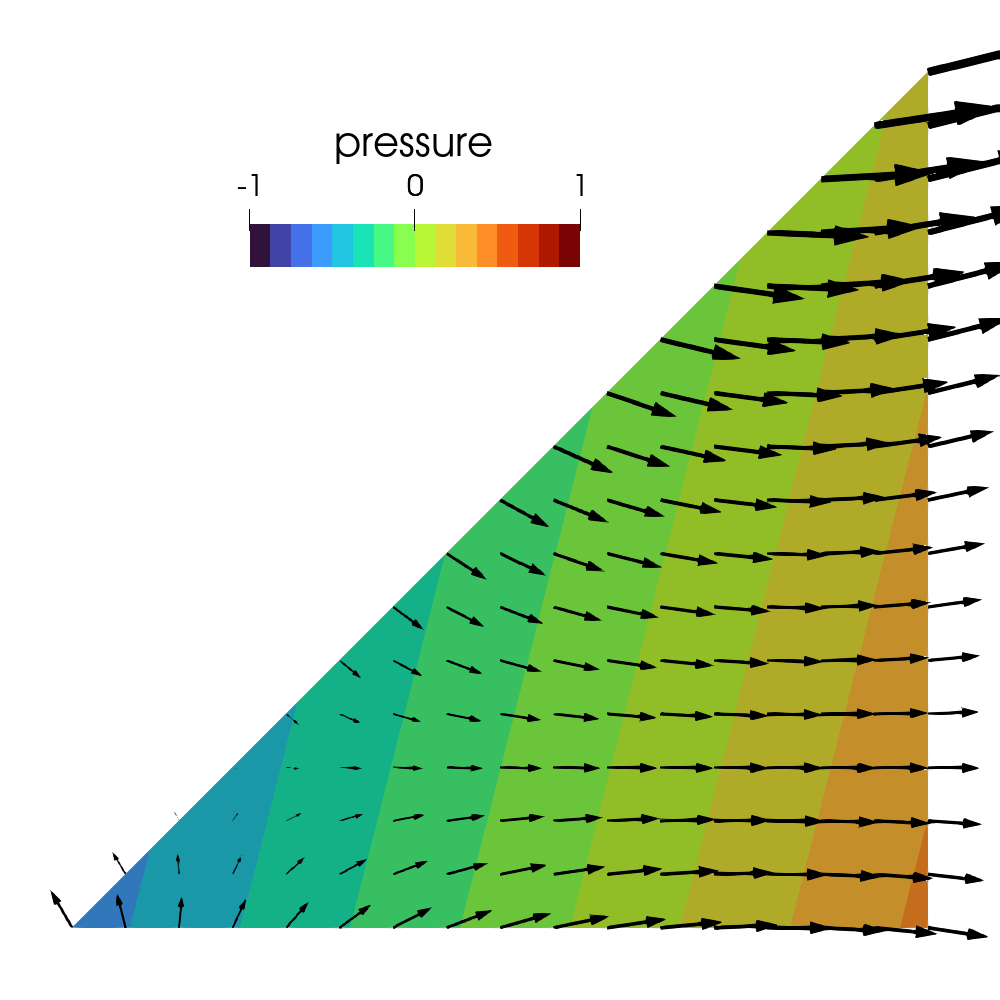} \hspace*{-0.03\textwidth}
    \includegraphics[width=0.21\textwidth]{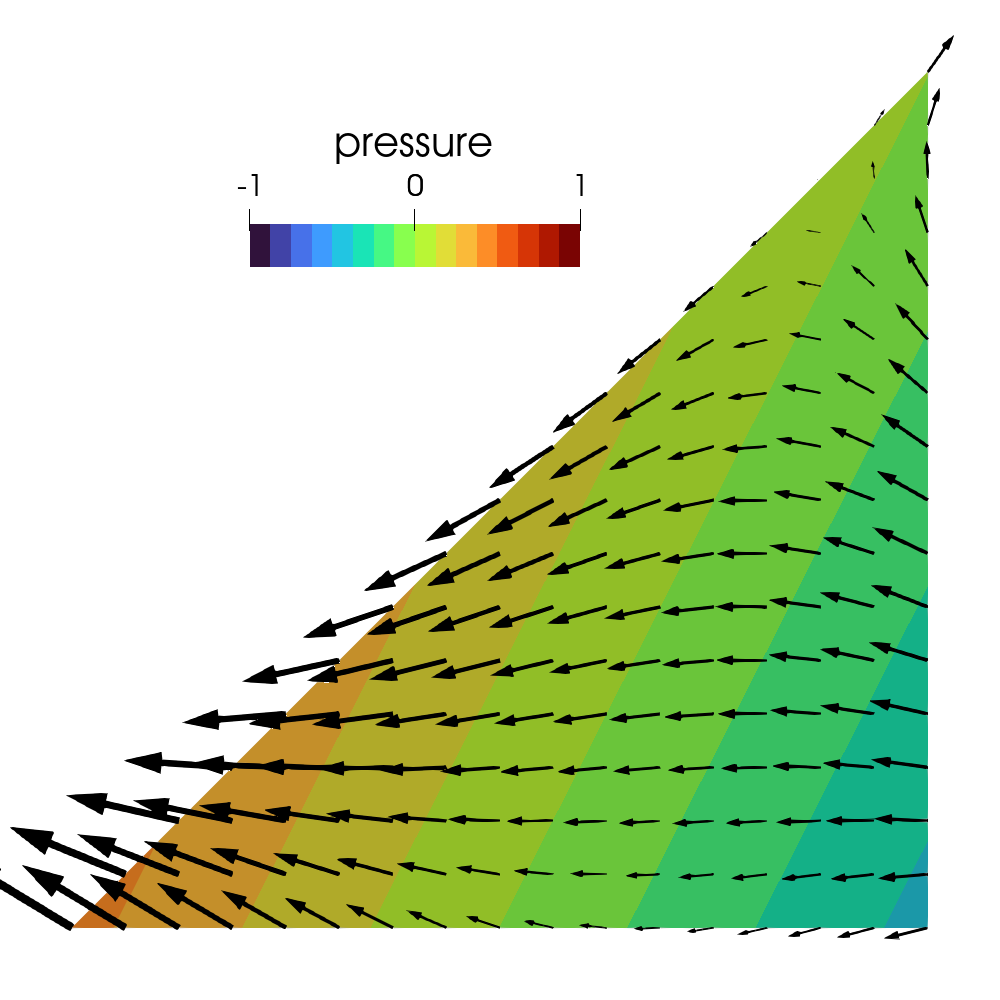} \hspace*{-0.03\textwidth}
    \includegraphics[width=0.21\textwidth]{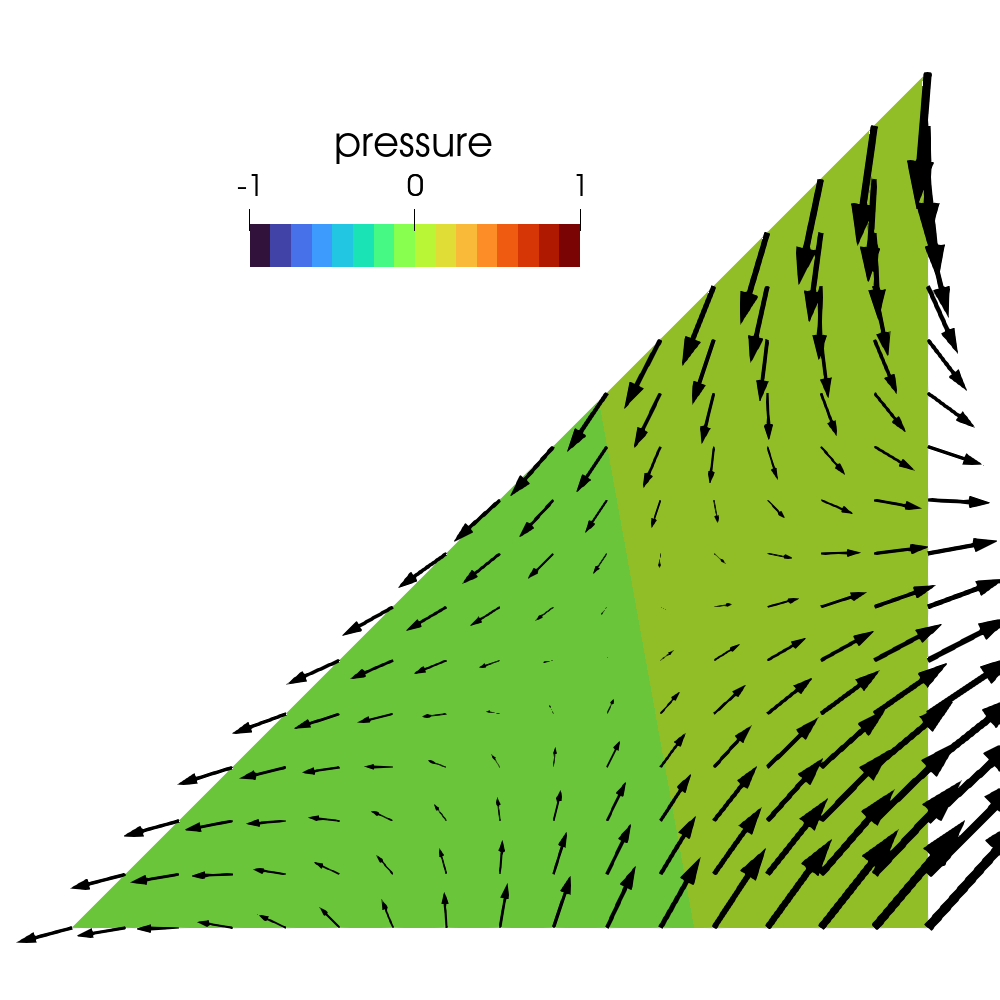} 
    \hspace*{-0.1\textwidth}
    \\
    \hspace*{-0.1\textwidth}
    \includegraphics[width=0.21\textwidth]{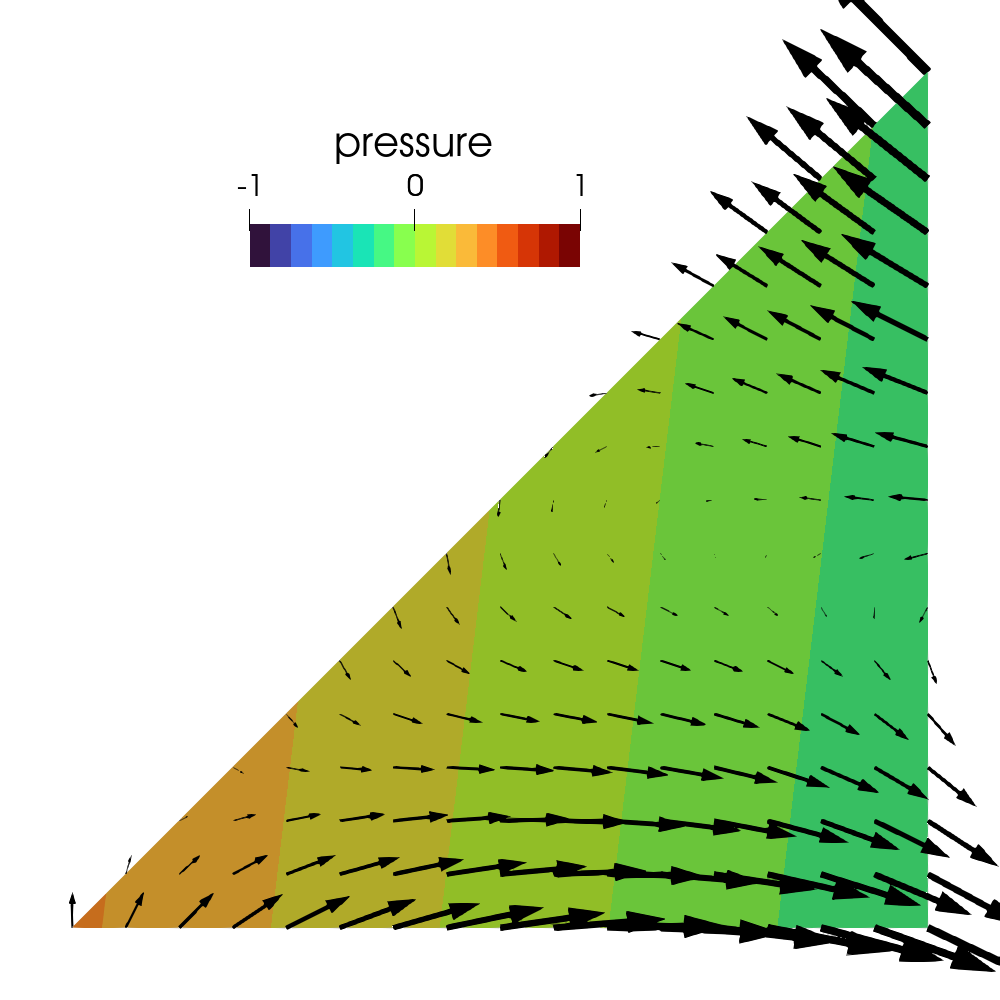} \hspace*{-0.03\textwidth}
    \includegraphics[width=0.21\textwidth]{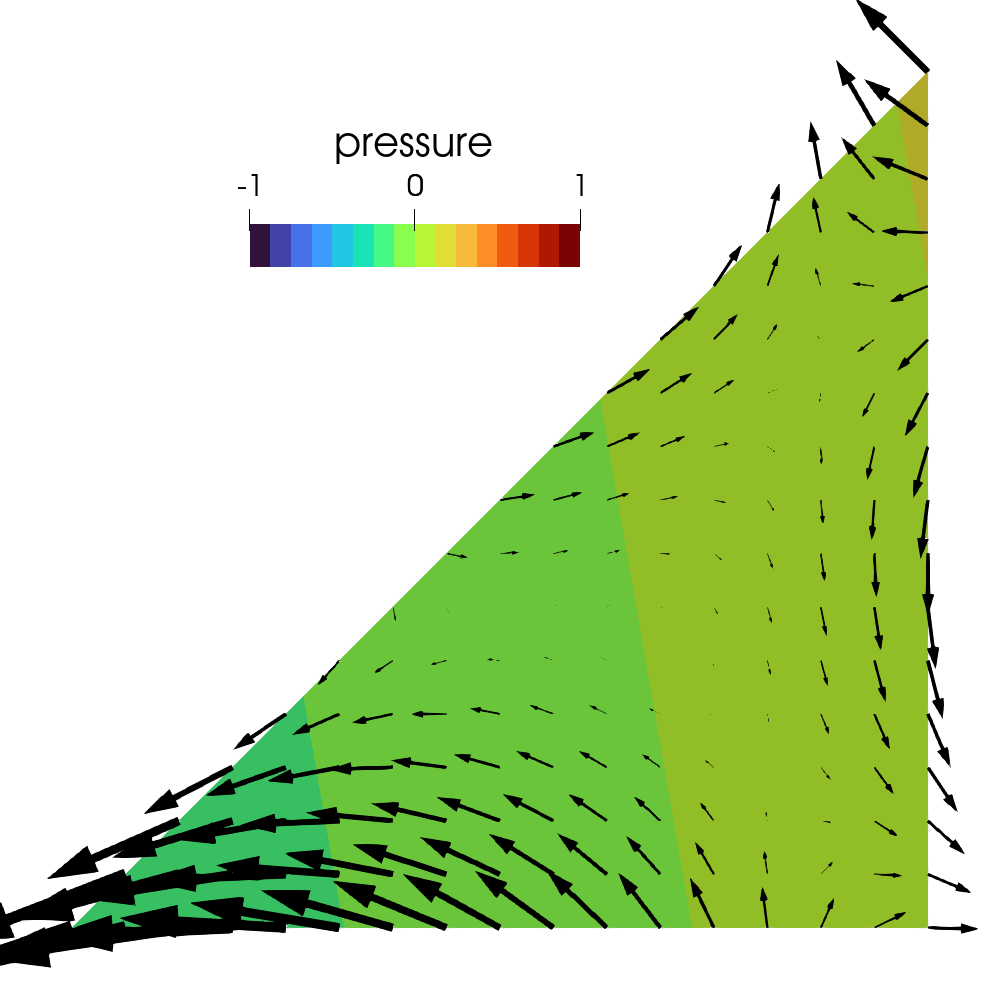} \hspace*{-0.03\textwidth}
    \includegraphics[width=0.21\textwidth]{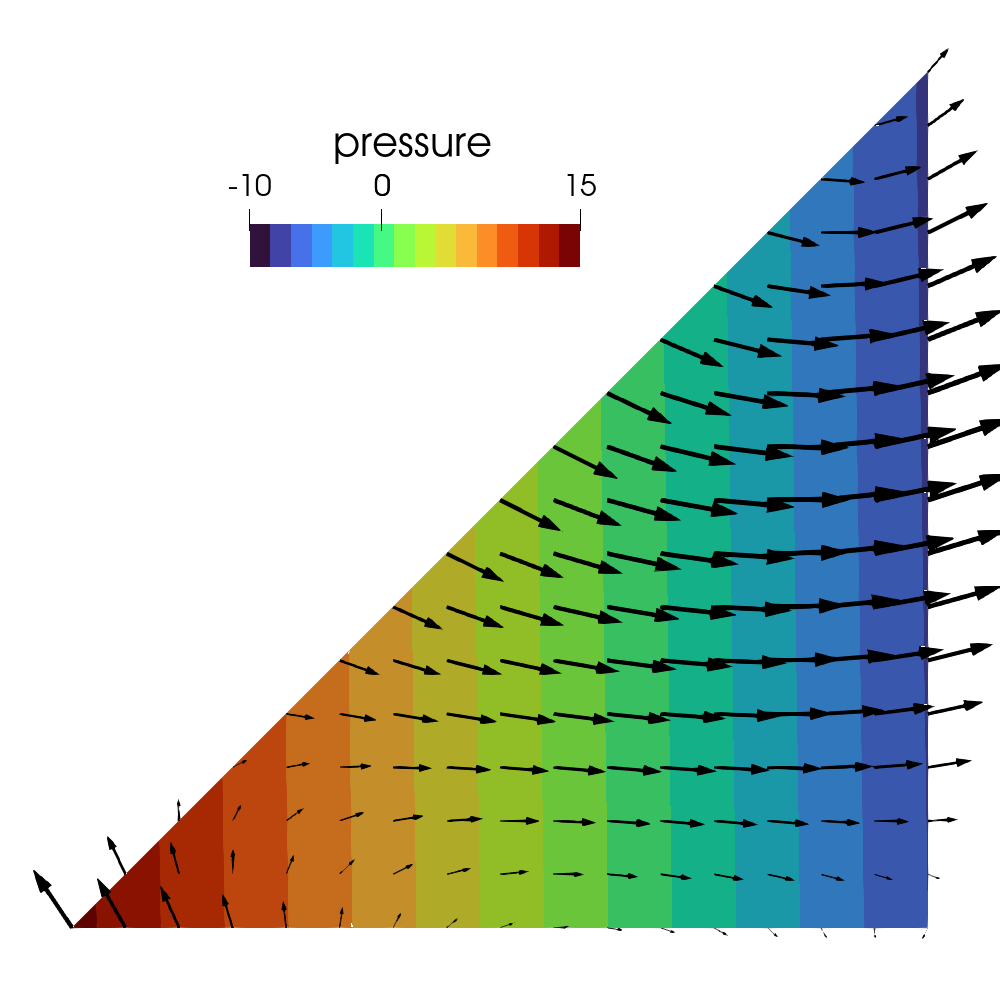} \hspace*{-0.03\textwidth}
    \includegraphics[width=0.21\textwidth]{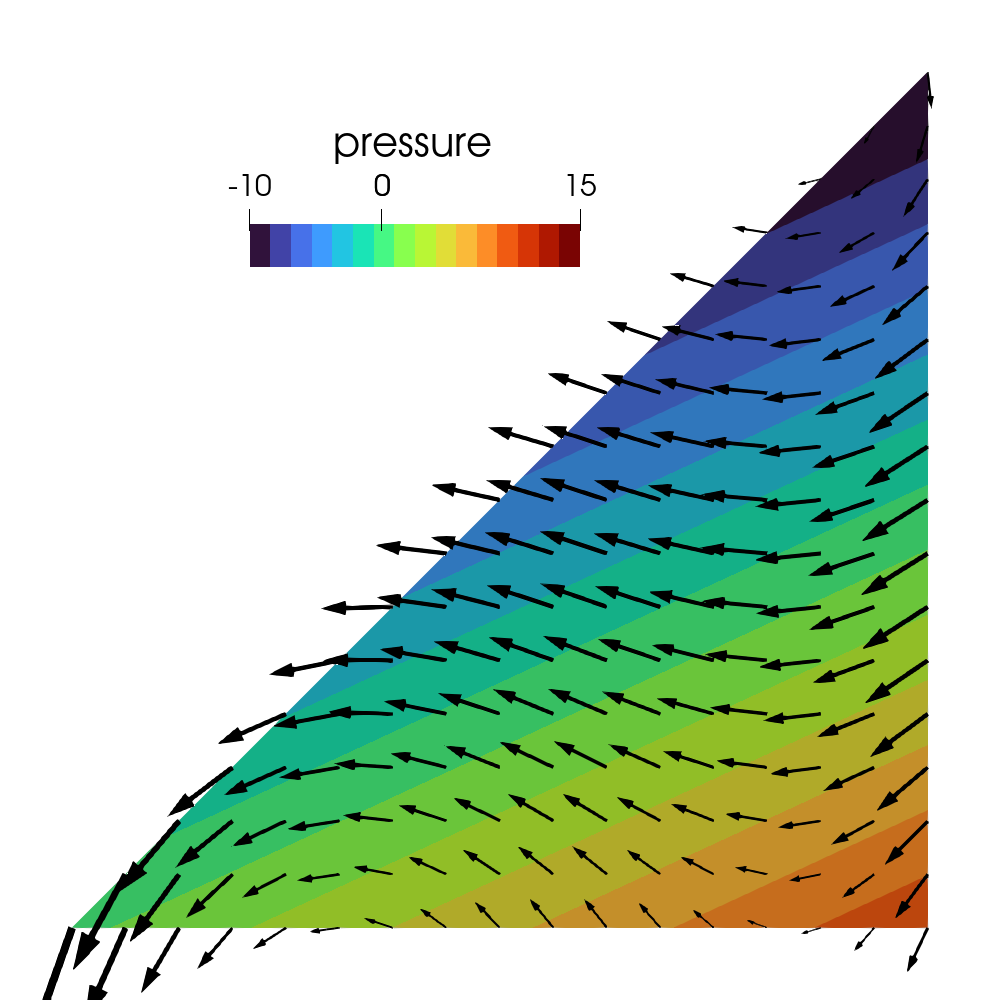} \hspace*{-0.03\textwidth}
    \includegraphics[width=0.21\textwidth]{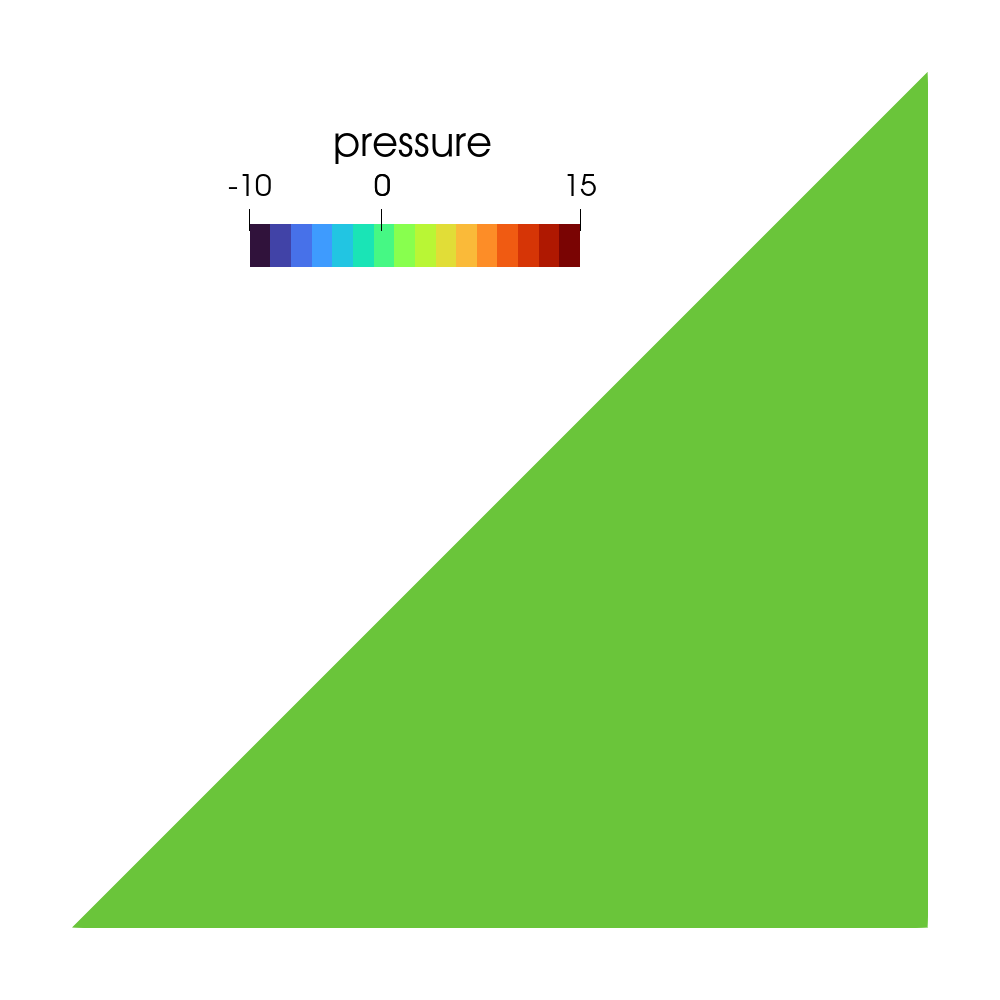} 
    \hspace*{-0.1\textwidth}
    \caption{Example basis functions of a Stokes-Trefftz space for
    $k=2$ on a triangle. The coloring corresponds to the pressure
    value while the arrows indicate the velocity. The pressure scaling
    is different between the first seven and the last three basis
    functions. Note, that these basis functions are obtained from the
    generic approach of the embedded Trefftz-DG method
    \cite{2201.07041}, cf. \cref{sec:impl}, and hence do not offer a
    complete insight into an available structure of the Trefftz space
    such as a clean decomposition into lower and higher order basis
    functions. Nevertheless, we observe that velocity and pressure
    functions are coupled for most basis functions except for the
    three lowest order Trefftz basis functions: The first two basis
    functions (in the upper row) have a zero pressure and are constant
    and linear, respectively, and divergence-free and the last (in the
    lower row) basis function corresponds to a zero velocity and a
    constant pressure.}
    \label{fig:trefftzbasis}
\end{figure}

~
\begin{remark}[Lowest order subspaces]\label{rem:loworder} Due to the
  Trefftz constraint, basis functions of the Trefftz-DG space can no
  longer be separated into velocity and pressure functions in general.
  See \cref{fig:trefftzbasis} for an example set of basis functions
  for $k=2$. However, for the lowest-order polynomial degrees, some
  degrees of freedom can be associated only with velocities and others
  only with pressure functions. The piecewise constant pressure
  functions $\PP^0(\Th)$ are not seen by the Trefftz constraints as
  the pressures only appear as gradients, i.e. these degrees of
  freedom remain in the Trefftz space. From the velocity space, the
  velocity-pressure coupling is removed (from the space) for all
  linear velocity functions, i.e. $[\PP^1(\Th)]^d$, as here the
  Laplacian vanishes. The element-wise divergence constraint removes
  exactly one \dof ~per element from this space ($\Div [\PP^1(\Th)]^d
  = \PP^0(\Th)$). Especially, piecewise constant functions
  $[\PP^0(\Th)]^d$ remain in the Trefftz-DG space. Finally, let us
  note that the problem formulated in the Trefftz-DG space is still of
  saddle-point form with the Lagrange multiplier space being only the
  space of piecewise constant functions. 
\end{remark}


\subsection{Implementation via embedded Trefftz method}\label{sec:impl}
So far we characterized the Trefftz-DG space only implicitly through the Trefftz
constraints. In principle a basis for the homogeneous Trefftz space $\TT_{0,0}$ could be
constructed, see e.g. \cite{POITOU2000561,Bouberbachene}. 
An alternative approach, based on the implicit characterization, is the use of the embedded Trefftz-DG method,
presented in \cite{2201.07041}.
The method allows to construct an embedding $T:
\IT(\Th) \to \X_h(\Th)$ based on the Trefftz constraint equations. Then, all
essential operations can be done by exploiting the underlying DG space. In this
approach it is also straightforward to find a generic element-wise particular
solution $(u_{h}^\part,p_{h}^\part)\in \X_h(\Th)$ required for the homogenization of the
Trefftz problem. In the following, we shortly recap the procedure of the embedded Trefftz method for the Stokes
setting, but refer to \cite{2201.07041} for more details. 

Let us define the matrix and vector associated to the discrete linear system \eqref{eq:dgstokes}
\begin{equation*}\label{def:matrices}
    (\bK)_{ij}=a_h(\phi_j,\phi_i)+b_h(\phi_j,\psi_i)+b_h(\phi_i,\psi_j),
     \qquad 
    (\bl)_i = (f,\phi_i)_\Th + (g,\psi_i)_\Th,
\end{equation*}
for $i,j = 1,\dots, N$ and $(\phi_i,\psi_i)\in \X_h(\Th)$ a set of basis functions with $N$ the number of degrees of freedom.
The Trefftz embedding can then be represented by the kernel of the matrix
\begin{equation*}
    (\bW)_{ij}=(-\Delta \phi_j + \Grad\psi_j, \tilde\phi_i)_\Th + (\Div\phi_j, \tilde\psi_i)_\Th,\qquad \bT=\ker\bW,
\end{equation*}
for a second set of test functions $(\tilde\phi_i, \tilde\psi_i)\in
[\PP^{k-2}(\Th)]^d \times \PP^{k-1}(\Th)$. The Trefftz embedding matrix $\bT$
then allows to characterize a Trefftz basis function as a linear
combination of polynomial DG basis functions. This allows the reduction of the
size of global and local finite element matrices reducing the overall costs
associated with the solution of the arising linear systems.

\section{A-priori error analysis} \label{sec:analysis} In this section
we derive a-priori error bounds for the Trefftz-DG method. To this end
we define the norms
\begin{align*}
  \norm{u}_{1,h}^2 := \nrm{\Grad u}_{\Th}^2 +   \nrm{ h^{-\frac12} \jmp{u}}^2_{\Fh}, \qquad 
  \norm{p}_{0,h}^2:= \nrm{ h \Grad p}_{\Th}^2 + \nrm{ h^\frac12 \jmp{\Pi^0 p}}_{\Fhi}^2.
\end{align*}
For $p_h \in \PP^{k-1}(\Th) /\, \RR$ there holds $\norm{p_h}_0 \simeq \norm{p_h}_{0,h}$.
For the overall problem we define the norm
\begin{align*}
  \nrm{(u,p)}_\TT^2 := \nu \norm{u}_{1,h}^2 + \nu^{-1} \norm{p}_{0,h}^2.
\end{align*}
In addition, as common in the analysis of DG methods, we introduce the stronger norms
\begin{align*}
  \norm{u}_{1,h,\ast}^2 := \norm{u}_{1,h}^2 + \nrm{ h^{\frac12} \ \partial_n u}^2_{\partial \Th}, \qquad 
  \nrm{(u,p)}_{\TT,\ast}^2 := \nu \norm{u}_{1,h,\ast}^2 + \nu^{-1} \norm{p}_{0,h}^2.
\end{align*}
We note that on $\TT(\Th)$  the norms $\nrm{(\cdot,\cdot)}_{\TT}$
and $\nrm{(\cdot,\cdot)}_{\TT,\ast}$ are equivalent with constants independent of $h$ and $\nu$.

For the analysis we will repeatedly rely on discrete trace and inverse inequalities, see e.g. \cite[Lemma 1.44-1.46]{di2011mathematical}, and optimality of the $\Pi^0$-projection on mesh elements and on faces, see e.g. \cite[Lemma 1.58 \& 1.59]{di2011mathematical}.
To show well-posedness of the Trefftz-DG method we will show a discrete inf-sup condition. For a brief recap on the topic see e.g. \cite[Section 1.3.2]{di2011mathematical} or \cite[Section 3.1 and 3.3]{zbMATH06625569}.

\subsection{Saddle-point structure and space decomposition}
In this section, we introduce some structures to analyse and exploit the saddle-point
structure of the variational problem. 
We first note that the space $\{0\} \times \PP^0(\Th) /\,\RR$, with $\PP^0(\Th)$ the space of piecewise
constant pressures form a subspace of $\TT(\Th)$. Accordingly, we can introduce the
decomposition into two subspaces: one that contains all velocity functions and
the high order ($\geq 1$) pressure functions and a second subspace only
consisting of the zero velocity and low-order pressures 
\begin{align} \begin{split}
    &\TT(\Th) = \HH(\Th) \oplus \LL(\Th) \\ 
    \text{with} \quad \LL(\Th) := \{0\} \times \PP^0(\Th) &\quad \text{and} \quad \HH(\Th) := \{(u_h,p_h) \in \TT(\Th) \mid \Pi^0 p_h=0\}.
\end{split}\end{align}
We recall $\TT = \TT(\Th)$, $\HH = \HH(\Th)$ and $\LL = \LL(\Th)$.
Corresponding projection operators for $\LL$ and $\HH$ are denoted by $\Pi^{\LL}$ and
$\Pi^{\HH} = \id -  \Pi^{\LL}$, respectively, 
so that for $(u_h,p_h) \in \TT$ we have $(u_h,p_h) = \Pi^{\HH}(u_h,p_h) + \Pi^{\LL}(u_h,p_h)$ with $  \Pi^{\LL}(u_h,p_h) = (0, \Pi^0 p_h) \in \LL$ and $ \Pi^{\HH} (u_h, p_h) = (u_h, (\id - \Pi^0) p_h)\in \HH$. 
Note that the decomposition effectively operates on the pressure only and is hence $L^2$-orthogonal on the pressure.

We can then introduce norms for the subspaces $\HH$ and $\LL$ by
\begin{align}
  \nrm{(u,p)}_{\HH}^2 := \nu \norm{u}_{1,h}^2 + \nu^{-1} \nrm{h \Grad p}_{\Th}^2,  \qquad
  \nrm{(u,p)}_{\LL}^2 := \nu^{-1} \nrm{h^\frac12 \jmp{\Pi^0 p}}_{\Fh}^2.
\end{align}
These norms are natural in the sense that $\nrm{(u_h,p_h)}_{\HH} =
\nrm{\Pi^\HH(u_h,p_h)}_{\TT}$,  $\nrm{(u_h,p_h)}_{\LL} =
\nrm{\Pi^\LL(u_h,p_h)}_{\TT}$
and 
$
\nrm{(u_h,p_h)}_\TT^2 = \nrm{(u_h,p_h)}_{\HH}^2 +   \nrm{(u_h,p_h)}_{\LL}^2
$
for $(u_h,p_h) \in \TT$.

Next, we observe that the higher-order pressures in $\HH$ are controlled by the velocity
due to the Trefftz constraint while on $\LL$ norms are completely determined by
the lowest order pressure contributions. 
This norm control is highlighted in the following lemma for general functions in $\TT$.
\begin{lemma}\label{lem:normeq}
  For $(u_h,p_h) \in \HH(\Th)$ there holds
  $
  \nrm{(u_h,p_h)}_\HH^2 \simeq \nu \nrm{u_h}_{1,h}^2
  $
  and for $(u_h,p_h) \in \LL(\Th)$ there holds
  $
  \nrm{(u_h,p_h)}_{\LL}^2 = \nu^{-1} \nrm{p_h}_{0,h}^2. 
  $
  Hence $\nrm{(u_h,p_h)}_{\TT}^2 \simeq \nu \nrm{u_h}_{1,h}^2 + \nu^{-1} \nrm{\Pi^0 p_h}_{0,h}^2$
  for $(u_h,p_h) \in \TT(\Th)$. 
\end{lemma}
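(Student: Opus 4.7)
The plan is to establish the three claims in order. The $\HH$-equivalence is the only step requiring the Trefftz structure in an essential way; the $\LL$-identity is a direct substitution, and the global equivalence on $\TT$ then follows from the orthogonal $\HH \oplus \LL$ decomposition that was just introduced.

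For the equivalence on $\HH$, the bound $\nu\,\nrm{u_h}_{1,h}^2 \le \nrm{(u_h,p_h)}_\HH^2$ is immediate from the definition. For the reverse inequality I would exploit that $\HH \subset \TT = \TT_{0,0}$: every such pair satisfies the homogeneous Trefftz constraint pointwise on each $T \in \Th$, which yields the element-wise identity relating $\Grad p_h$ to $\Delta u_h$ (with an appropriate $\nu$-factor from the momentum equation). Combined with the standard inverse estimate $\nrm{\Delta u_h}_T \lesssim h_T^{-1}\,\nrm{\Grad u_h}_T$, valid for $u_h \in [\cP^k(T)]^d$, and summation over $T \in \Th$, this produces $\nu^{-1}\,\nrm{h\,\Grad p_h}_\Th^2 \lesssim \nu\,\nrm{u_h}_{1,h}^2$, which together with the trivial lower bound gives the asserted equivalence.

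For $(u_h,p_h) \in \LL$ the claim is a direct substitution: $u_h = 0$ and $p_h \in \PP^0(\Th)$ imply $\Pi^0 p_h = p_h$ and $\Grad p_h \equiv 0$ on each element, so both $\nrm{(u_h,p_h)}_\LL^2$ and $\nu^{-1}\,\nrm{p_h}_{0,h}^2$ collapse to the same facet-jump contribution of $p_h$ (using the paper's convention for pressure jumps on boundary facets), yielding equality. The global equivalence on $\TT$ now follows by decomposing any $(u_h,p_h) \in \TT$ as $\Pi^\HH(u_h,p_h) + \Pi^\LL(u_h,p_h) = (u_h,(\id-\Pi^0)p_h) + (0,\Pi^0 p_h)$ and invoking the identity $\nrm{(u_h,p_h)}_\TT^2 = \nrm{(u_h,p_h)}_\HH^2 + \nrm{(u_h,p_h)}_\LL^2$ from the subspace-norm definitions. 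Applying the first claim to the $\HH$-part (noting $\Grad((\id-\Pi^0)p_h) = \Grad p_h$ on each element, so the $\HH$-norm is insensitive to the piecewise-constant component of $p_h$) produces $\nu\,\nrm{u_h}_{1,h}^2$, while applying the second to the $\LL$-part produces $\nu^{-1}\,\nrm{\Pi^0 p_h}_{0,h}^2$; summing yields the stated equivalence.

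The main obstacle is the $\HH$-equivalence: this is the only step where the Trefftz constraint is used essentially, turning the higher-order pressure unknowns into \emph{slaves} of the velocity via an element-wise inverse inequality. Everything else is bookkeeping on the $\HH \oplus \LL$ splitting together with a direct substitution on piecewise-constant pressures, and should go through with no further surprises.
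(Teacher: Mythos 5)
Your proposal is correct and follows essentially the same route as the paper's proof: the Trefftz constraint $\Grad p_h = \nu\Delta u_h$ plus a local inverse inequality to enslave the higher-order pressure on $\HH$, direct substitution on $\LL$, and the $\HH\oplus\LL$ norm splitting for the global statement. Your added remark that the $\HH$-norm is insensitive to the piecewise-constant part of $p_h$ is a detail the paper leaves implicit, but otherwise the arguments coincide.
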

\begin{proof}
  From the Trefftz constraint we have element-wise $\nabla p_h = \nu
  \Delta u_h$ for $(u_h,p_h) \in \TT$. Exploiting this, together with
  standard local inverse inequalities, yields
  \begin{align*}
    \nu \norm{u_h}_{1,h}^2 \leq \nrm{(u_h,p_h)}_\HH & = \nu \norm{u_h}_{1,h}^2 + \nu^{-1} \nrm{h \Grad p_h}_{\Th}^2 = \nu \norm{u_h}_{1,h}^2 + \nu \nrm{h \ \Delta u_h}_{\Th}^2 \\
    & \lesssim \nu \norm{u_h}_{1,h}^2 +  \nu \nrm{\Grad u_h}_{\Th}^2 \lesssim \nu \norm{u_h}_{1,h}^2. 
  \end{align*}
  Similarly we also have $\nrm{(u_h,p_h)}_{\LL}^2 = \nu^{-1} \nrm{p_h}_{0,h}^2$ which follows from
  $u_h=0$ and $\Grad p_h = 0$ for a function $(u_h,p_h) \in \LL$. Now, splitting $(u_h,p_h) \in \TT$ into its components from
  $\LL$  and $\HH$ and exploiting the previous statements we have
  \begin{align*}
    \nrm{(u_h,p_h)}_\TT^2
    & = \nrm{\Pi^\HH(u_h,p_h)}_{\TT}^2 +   \nrm{\Pi^{\LL}(u_h,p_h)}_{\TT}^2
    = \nrm{\Pi^\HH(u_h,p_h)}_{\HH}^2 +   \nrm{\Pi^{\LL}(u_h,p_h)}_{\LL}^2 \\
    & = \nrm{(u_h,(\id - \Pi^0)p_h)}_{\HH}^2 + \nrm{(u_h, \Pi^0 p_h)}_{\LL}^2
    \simeq \nu \nrm{u_h}_{1,h}^2 + \nu^{-1} \nrm{\Pi^0 p_h}_{0,h}^2.
  \end{align*}
\end{proof}
For the stability analysis, we will make use of the special saddle-point structure,  
that develops from the subspace splitting $\TT = \HH \oplus \LL$, to show that the bilinear form $\Kh(\cdot,\cdot)$ satisfies a discrete inf-sup condition.
To this end we can split the bilinear form $\Kh(\cdot,\cdot)$ into its
contributions that we obtain by restricting to the corresponding subspaces $\LL$
and $\HH$. This gives
\begin{align} \label{eq::K_on_TT}
  \Kh((u_h,p_h),&(v_h,q_h)) \nonumber \\ =
&\Kh(\Pi^\HH(u_h,p_h),\Pi^\HH(v_h,q_h)) + \Kh(\Pi^{\HH}(u_h,p_h),\Pi^\LL(v_h,q_h)) \nonumber \\
&+ \Kh(\Pi^{\HH}(v_h,q_h),\Pi^\LL(u_h,p_h)) + \Kh(\Pi^{\LL}(u_h,p_h),\Pi^\LL(v_h,q_h)) \nonumber \\ 
  =& \Kh(\Pi^\HH(u_h,p_h),\Pi^\HH(v_h,q_h)) + b_h(u_h,\Pi^0 q_h) + b_h(v_h,\Pi^0p_h),
\end{align}
where we used that $\Kh(\Pi^{\LL}(u_h,p_h),\Pi^\LL(v_h,q_h)) = 0$ and $\Pi^{\LL}(u_h,p_h) = (0,\Pi^0 p_h)$ so that the velocity contributions of $\Pi^{\LL}(u_h,p_h)$ vanish.

\subsection{Continuity and kernel-coercivity}
\begin{lemma}
  The bilinear form $\Kh(\cdot,\cdot)$ is continuous, i.e.
    we have for all $(u,p), (v,q) \in \TT(\Th) + [H^{2}(\Th)]^d \times H^1(\Th)$
    \begin{align} \label{eq:Khcontinuity}
        \Kh((u,p),(v,q)) \lesssim \nrm{(u,p)}_{\TT,\ast} \nrm{(v,q)}_{\TT, \ast}.
    \end{align}
\end{lemma}
\begin{proof}
    The bound for $ a_h(u,v)\leq \nrm{u}_{1,h,*} \nrm{v}_{1,h,*} $ follows by standard arguments, see e.g. \cite[Lemma 4.16]{di2011mathematical}.
    Where as for the mixed terms we compute
    \begin{align*}
        b_h&(u,q) \leq \nrm{\Div v}_{0} \nrm{p}_{0} + \nrm{h^{-\frac12}\jmp{v\cdot n}}_{\Fh} \nrm{h^{\frac12}\avg{p}}_{\Fh} 
        \leq \nrm{v}_{1,h} (\nrm{p}_{0} + \nrm{h^{\frac12}\avg{p}}_{\Fh} ).
    \end{align*}
    Using optimality of the $\Pi^0$-projection and a discrete trace inequality we get
    \begin{align*}
        \nrm{h^{\frac12}\avg{p}}_{\Fh} \leq \nrm{h^{\frac12}p}_{\partial \Th} \leq \nrm{h^{\frac12}(p-\Pi^0 p)}_{\partial \Th} + \nrm{h^{\frac12}\Pi^0p}_{\partial \Th} \leq \nrm{h\nabla p}_0+ \nrm{\Pi^0p}_{0},
    \end{align*}
    and by similar reasoning for the volume term $ \nrm{p}_0
    \leq \nrm{h \nabla p}_{0} + \nrm{\Pi^0 p}_0$.
    Finally, we use \cite[Remark 1.1]{brenner2003poincare}, with the fact that $\Pi^0 p \in \PP^0(\Th)/ \IR$, to obtain
    $\nrm{\Pi^0 p}_0 \leq \nrm{h^{\frac12}\jmp{\Pi^0 p}}_{\Fhi}$, finishing the proof.
\end{proof}

Next, we show coercivity of $\Kh|_{\HH \times \HH}$ (``the upper left
block''). 

\begin{lemma}\label{lem:coercivity} Let the stabilisation parameter
  $\alpha>0$ be sufficiently large, then for $(u_h,p_h) \in \HH(\Th)$ there
  holds
  \begin{equation}
    \Kh((u_h,p_h),(u_h,p_h)) \geq c_\HH \nrm{(u_h,p_h)}_{\HH}^2,
  \end{equation}
  for a constant independent of the mesh size $h$ and $\nu$.
\end{lemma}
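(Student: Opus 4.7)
The plan is to expand the diagonal test,
\[
  \Kh((u_h,p_h),(u_h,p_h)) = a_h(u_h,u_h) + 2\,b_h(u_h,p_h),
\]
combine the standard interior penalty DG coercivity for $a_h$ with a careful bound on the cross term $2\,b_h(u_h,p_h)$ that exploits both Trefftz constraints available on $\HH$, and conclude via \cref{lem:normeq}.

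For $\alpha$ large enough, classical IP-DG arguments give
$
  a_h(u_h,u_h) \geq c_1\,\nu\,\nrm{\Grad u_h}_\Th^2 + c_2(\alpha)\,\nu\,\nrm{h^{-1/2}\jmp{u_h}}_\Fh^2,
$
with $c_1>0$ fixed and $c_2(\alpha)$ arbitrarily large as $\alpha$ grows. For the cross term, the Trefftz condition $\Div u_h=0$ pointwise removes the volume part of $b_h$, leaving only the facet contribution $b_h(u_h,p_h) = (\jmp{u_h\cdot n},\avg{p_h})_\Fh$, and a Cauchy--Schwarz estimate bounds this by $\nrm{h^{-1/2}\jmp{u_h}}_\Fh\,\nrm{h^{1/2}\avg{p_h}}_\Fh$. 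The pressure factor is controlled by using that $(u_h,p_h)\in\HH$ implies $\Pi^0 p_h = 0$ on each element: a scaled Poincaré--Friedrichs inequality together with a discrete trace inequality gives $\nrm{h^{1/2}\avg{p_h}}_\Fh \lesssim \nrm{h\Grad p_h}_\Th$, and the momentum Trefftz equation combined with a polynomial inverse inequality (as in the proof of \cref{lem:normeq}) further yields $\nrm{h\Grad p_h}_\Th \lesssim \nu\,\nrm{\Grad u_h}_\Th$.

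A weighted Young's inequality then distributes $|2\,b_h(u_h,p_h)| \lesssim \nrm{h^{-1/2}\jmp{u_h}}_\Fh\cdot\nu\nrm{\Grad u_h}_\Th$ between the two positive contributions to $a_h$: the jump factor is absorbed into the penalty term (coefficient $c_2(\alpha)\,\nu$), while the $\nu\nrm{\Grad u_h}_\Th$ factor is absorbed into the bulk term (coefficient $c_1\,\nu$). A feasible Young parameter exists as long as $c_2(\alpha)$ exceeds a fixed multiple of $1/c_1$, a threshold independent of $\nu$ and $h$; this fixes the notion of ``sufficiently large'' for $\alpha$ and yields $\Kh((u_h,p_h),(u_h,p_h)) \gtrsim \nu\,\nrm{u_h}_{1,h}^2$, after which \cref{lem:normeq} converts the right-hand side to $c_\HH\,\nrm{(u_h,p_h)}_\HH^2$.

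The main obstacle is that, unlike the skew-symmetric choice $(u_h,-p_h)$ which would annihilate the $b_h$ contributions, the diagonal test retains the cross term $2\,b_h(u_h,p_h)$, and bounding it with the correct $\nu$-scaling forces the simultaneous use of both Trefftz constraints --- divergence-freeness to collapse $b_h$ to a facet-only integral, and the momentum equation to reintroduce $\nu\Delta u_h$ in place of $\Grad p_h$ --- together with the $\HH$-specific vanishing-mean property of pressures, which is essential for the Poincaré step.
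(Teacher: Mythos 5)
Your proposal is correct and follows essentially the same route as the paper's proof: expand the diagonal into $a_h(u_h,u_h)+2b_h(u_h,p_h)$, use pointwise divergence-freeness to reduce $b_h$ to the facet term, control $\nrm{h^{1/2}\avg{p_h}}_{\Fh}$ via the zero-mean property of $p_h$ on $\HH$ and the momentum Trefftz constraint $\Grad p_h=\nu\Delta u_h$ with inverse inequalities, and absorb the result by Young's inequality into the bulk and penalty parts of $a_h$ before invoking \cref{lem:normeq}. The only cosmetic difference is that you split Young's inequality between the two positive terms of $a_h$ while the paper fixes $\varepsilon$ against the bulk term and pushes the remaining $\varepsilon^{-1}$ onto the penalty via the ``sufficiently large $\alpha$'' condition; these are the same argument.
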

\begin{proof}
Using decomposition \eqref{eq::K_on_TT} we first have
  \begin{align*}
    \Kh((u_h,p_h),(u_h,p_h)) & = a_h(u_h,u_h) + 2 b_h(u_h,p_h).
    \end{align*}
    For the first term $a_h(u_h,u_h)$ it is well-known that there holds coercivity on $\PP^k(\Th)$ with respect to the discrete norm $\nu^{\frac12} \norm{\cdot}_{1,h}$ for $\alpha>0$ sufficiently large, see for example \cite{arnold2002unified} or \cite[Lemma 4.12]{di2011mathematical}. We denote by $c_\alpha$ a sufficiently lower bound such that for $\alpha\geq c_\alpha$ coercivity with corresponding coercivity constant $c_a$ is guaranteed.
Hence $a_h(u_h,v_h) = a_h^{\alpha=c_\alpha}(u_h,v_h) + (\alpha - c_\alpha) \nu (h^{-\frac12}\jmp{u_h},\jmp{v_h})_{\Fh}$, where $a_h^{\alpha=c_\alpha}(u_h,v_h)$ is the bilinear form with interior penalty stabilization parameter $c_\alpha$ for which there holds
$a_h^{\alpha=c_\alpha}(u_h,u_h) \geq c_a \nu \Vert u_h \Vert_{1,h}^2$.
As $u_h$ is pointwise divergence-free the volume contribution in $b_h(\cdot,\cdot)$ vanishes and we have
  \begin{align*}
    \Kh((u_h,p_h),(u_h,p_h)) & \geq c_{a} \nu \norm{u_h}_{1,h}^2 + \frac{\alpha - c_\alpha}{h} \nu \norm{\jmp{u_h}}_{\Fh}^2 + 2 (\jmp{u_h \cdot n}, \avg{p_h})_{\Fh}.
  \end{align*}
  Thus it remains to bound the latter term. Let $\varepsilon > 0$, then Cauchy-Schwarz
  and Young's inequality ($2 a b \leq \delta^{-1} a^2 + \delta
  b^2,~a,b,\delta \geq 0$) yield
  \begin{align*}
    2 (\jmp{u_h \cdot n}, \avg{p_h})_{\Fh} & \leq \nu \varepsilon^{-1} \Vert{h^{-\frac12}\jmp{u_h \cdot n}}\Vert_{\Fh}^2 + \varepsilon \nu^{-1} \nrm{h^{\frac12}\avg{p_h}}_{\Fh}^2.
  \end{align*}
  Using inverse inequalities for polynomials and recalling that $p_h =
  (\id-\Pi^0)p_h$ (as $(u_h,p_h)\in \HH$) we have with the
  element-wise Trefftz constraint $\Grad p_h = \nu \Delta u_h$ 
  \begin{align*}
    \varepsilon \nu^{-1} \nrm{h^{\frac12} \avg{p_h}}_{\Fh}^2  &\leq c_{\text{inv}} \varepsilon \nu^{-1} \nrm{ h \nabla p_h}_{\Th}^2 = c_{\text{inv}} \varepsilon \nu^{-1}  \nrm{ h \nu \Delta u_h}_{\Th}^2 \leq c_{\text{inv}}' \varepsilon \nu \nrm{\nabla u_h}_{\Th}^2,
  \end{align*} 
  for a constant $c_{\text{inv}}'$ that does not depend on $h$. Choosing $\varepsilon = \frac{c_a}{2 c'_{\text{inv}}}$ we conclude
  \begin{align*}
    \Kh((u_h,p_h),(u_h,p_h)) & \geq \frac12 c_a \nu  \norm{u_h}_{1,h}^2 + \frac{\alpha - c_\alpha - \varepsilon^{-1}}{h} \nu \norm{\jmp{u_h}}_{\Fh}^2,
  \end{align*}
  and hence, for $\alpha$ sufficiently large we obtain the claim with the norm equivalence of \cref{lem:normeq}.
\end{proof}

\subsection{LBB-stability}
We now aim to show the Ladyzhenskaya–Babuška–Brezzi (LBB)-condition for $\Kh|_{\HH \times \LL}$ (``the off-diagonal blocks'').

For the LBB-stability we use a stability result of an auxiliary (low order) discrete problem: Given $\tilde{p}_h \in \PP^1(\Fhi)$ find $(w_h,r_h,\hat{r}_h) \in [\PP^{1}(\Th)]^d \times \PP^0(\Th) /\, \RR \times \PP^1(\Fh) $, s.t.
\begin{subequations}\label{eq:aux}
\begin{align}
  a_{h}^{\nu = 1}(w_h,v_h) & \!+\! (r_h, \Div v_h)_{\Th} \!+\! (\hat{r}_h, \jmp{v_h \cdot n})_{\Fh}  &&
  \!\!\!\!\!\!= 0 &&\!\!\! \forall v_h \in [\PP^1(\Th)]^d, \label{eq:auxa}\\
  (s_h, \Div w_h)_{\Th} &&& \!\!\!\!\!\!= 0 &&\!\!\! \forall s_h \in \PP^0(\Th)  /\, \RR,\!\! \label{eq:auxb}\\
  (\hat{s}_h, \jmp{w_h \!\cdot\! n})_{\Fh} 
  &&& \!\!\!\!\!\! = (\tilde{p}_h, \hat{s}_h)_{\Fhi}   &&\!\!\! \forall \hat{s}_h \in  \PP^1(\Fh). \label{eq:auxc}
\end{align}
\end{subequations}
Here, the bilinear form $a_{h}^{\nu = 1}(\cdot,\cdot)$ is to be understood as the bilinear form that is obtained from $a_{h}(\cdot,\cdot)$ when setting $\nu$ to $1$, so that the auxiliary problem is independent of $\nu$. In this auxiliary problem, $r_h$ and $\hat{r}_h$ are the Lagrange multipliers for enforcing the pointwise divergence-constraint $\Div w_h=0$ and the normal jump condition $\jmp{w_h \cdot n} = \tilde{p}_h$, respectively.
For $\tilde{p}_h = 0$ the solution $w_h$ is normal-continuous, i.e. $H(\Div)$-conforming. More precisely we would have $w_h \in \BDM^{1}(\Th) = [\PP^1(\Th)]^d \cap H(\Div;\Omega)$ with $\BDM^{1}$ the Brezzi-Douglas-Marini space, cf. \cite[Section 2.3.1]{brezzi}. For general $\tilde{p}_h$ we instead have 
$w_h \in \BDM^{-1}(\Th) = [\PP^1(\Th)]^d$ with $\BDM^{-1}(\Th)$ the Brezzi-Douglas-Marini element with broken $H(\Div)$-continuity.

\begin{lemma}\label{lem:aux} The velocity solution $w_h \in
  [\PP^1(\Th)]^d$ of the auxiliary problem \eqref{eq:aux} is
  element-wise divergence-free $\Div w_h = 0$ and further there holds
  $\jmp{w_h \cdot n} = \tilde{p}_h$ on $\Fhi$ and $\nrm{w_h}_{1,h}
  \lesssim \nrm{h^{-\frac12}\tilde{p}_h}_{\Fhi}$.
\end{lemma}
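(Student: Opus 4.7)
The plan is to read off the two pointwise identities directly from the constraint equations \eqref{eq:auxb} and \eqref{eq:auxc}, and then extract the norm bound from the energy identity obtained by testing \eqref{eq:auxa} with $v_h = w_h$, bounding the Lagrange multiplier $\hat r_h$ through a discrete inf--sup argument.

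For the pointwise identities, taking $\hat s_h\in\PP^1(\Fh)$ supported on a single facet $F$ in \eqref{eq:auxc} and letting $\hat s_h|_F$ span $\PP^1(F)$ yields $\jmp{w_h\cdot n}=\tilde p_h$ on every $F\in\Fhi$ and $w_h\cdot n = 0$ on every $F\in\Fhb$, since in both cases the two sides lie in $\PP^1(F)$. Since $\Div_h w_h\in\PP^0(\Th)$, equation \eqref{eq:auxb} forces $\Div_h w_h$ to be a global constant; the divergence theorem combined with the boundary trace and the jump identity determine this constant to equal $|\Omega|^{-1}\int_\Fhi \tilde p_h$, which vanishes under the compatibility that is naturally inherited in the LBB-analysis this auxiliary problem is designed for.

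Testing \eqref{eq:auxa} with $v_h=w_h$ and substituting $\Div_h w_h=0$ together with $\jmp{w_h\cdot n}=\tilde p_h$ collapses the two Lagrange-multiplier contributions to a single facet integral:
\begin{equation*}
    a_h^{\nu=1}(w_h,w_h) = -(\hat r_h,\tilde p_h)_{\Fhi}.
\end{equation*}
Standard interior-penalty coercivity on $[\PP^1(\Th)]^d$ (cf.\ \cite{arnold2002unified,di2011mathematical}) gives $\nrm{w_h}_{1,h}^2 \lesssim a_h^{\nu=1}(w_h,w_h)$, and Cauchy--Schwarz on the right-hand side yields
\begin{equation*}
    \nrm{w_h}_{1,h}^2 \lesssim \nrm{h^{1/2}\hat r_h}_{\Fhi}\,\nrm{h^{-1/2}\tilde p_h}_{\Fhi},
\end{equation*}
so it only remains to prove $\nrm{h^{1/2}\hat r_h}_{\Fhi}\lesssim \nrm{w_h}_{1,h}$ and divide.

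For this last step I would reinterpret \eqref{eq:auxa} as the defining equation for $\hat r_h$ and invoke a discrete inf--sup condition for the facet multiplier: given $\hat r_h\in\PP^1(\Fh)$, construct a locally supported $v_h\in[\PP^1(\Th)]^d$ with $\Div v_h=0$ elementwise satisfying $(\hat r_h,\jmp{v_h\cdot n})_\Fh \gtrsim \nrm{h^{1/2}\hat r_h}_\Fh^2$ and $\nrm{v_h}_{1,h}\lesssim \nrm{h^{1/2}\hat r_h}_\Fh$. Plugging such a $v_h$ into \eqref{eq:auxa} annihilates the middle term by the local divergence-freeness, and continuity of $a_h^{\nu=1}$ closes the estimate. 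Proving this inf--sup is the main obstacle; I would obtain it by an explicit facet-bubble construction on the two elements adjacent to each facet, matching the normal trace to a prescribed multiple of $\hat r_h|_F$ while vanishing on all other facets of those elements, followed by a standard scaling argument. Conceptually this is the well-known stability of the BDM$_1$--$\PP^0$ mixed pair transferred to the broken setting where normal continuity is imposed weakly through the $\PP^1(\Fh)$ multiplier.
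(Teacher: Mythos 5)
Your strategy---read the pointwise identities off the constraint equations, test \eqref{eq:auxa} with $v_h=w_h$ to get $a_h^{\nu=1}(w_h,w_h)=-(\hat r_h,\tilde p_h)_{\Fhi}$, and close by bounding the multiplier---is a legitimate hands-on alternative to the paper's proof, which instead invokes the abstract stability theory of twofold (nested) saddle-point problems: inf-sup of the facet-jump form over all of $[\PP^1(\Th)]^d$ via elementwise $\BDM$ interpolation, inf-sup of the divergence form over the kernel $\BDM^1(\Th)$ of the jump form, and coercivity of $a_h^{\nu=1}$ on the double kernel. The energy identity itself is correct (note $w_h\cdot n=0$ on $\Fhb$ is needed and follows from \eqref{eq:auxc}). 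The gap is in your final step. A facet bubble $v_h$ supported on the two elements adjacent to $F$, elementwise divergence-free and with vanishing normal trace on all other facets, necessarily satisfies $\int_F v_h\cdot n=\int_T\Div v_h=0$, i.e.\ its normal trace on $F$ has zero mean. Such test functions can therefore never detect the facetwise-constant component of $\hat r_h$, and the inf-sup you postulate (divergence-free, locally supported $v_h$ with $(\hat r_h,\jmp{v_h\cdot n})_{\Fh}\gtrsim\nrm{h^{1/2}\hat r_h}_{\Fh}^2$) is false. Reaching the constant modes forces nonzero net flux through $F$, which reintroduces $(r_h,\Div v_h)_{\Th}$. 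The repair is exactly the nested structure the paper uses: first bound $r_h$ by testing \eqref{eq:auxa} with normal-continuous $v_h\in\BDM^1(\Th)$ (the $\hat r_h$-term vanishes there) using the classical $\BDM^1$--$\PP^0$ divergence inf-sup, and only then bound $\hat r_h$ with general broken test fields whose facet normal traces are prescribed by elementwise $\BDM$ interpolation, absorbing the now-controlled $(r_h,\Div v_h)_{\Th}$.

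A second, smaller issue: your own argument only yields that $\Div_h w_h$ is a global constant $c$ with $c\,|\Omega|=\int_{\Fhi}\tilde p_h$ (using $w_h\cdot n=0$ on $\Fhb$), and you dispose of this by asserting the compatibility ``is naturally inherited.'' It is not automatic: in \cref{lem:LBB} the data is $\tilde p_h=-\jmp{q_h}$ with $q_h\in\PP^0(\Th)\setminus\RR$, and $\int_{\Fhi}\jmp{q_h}$ need not vanish (two triangles carrying opposite constants already give a nonzero value). You have put your finger on a real subtlety---one the paper's proof is also silent about---but flagging it and then asserting it away is not a proof; either the compatibility must be verified for the data at hand or the conclusion weakened to $\Div_h w_h=|\Omega|^{-1}\int_{\Fhi}\tilde p_h$.
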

\begin{proof}
  Very closely related statements can be found in the literature of hybridized mixed and $H(\Div)$-conforming DG and hybrid DG methods, cf. e.g. \cite{cockburn2005locally,cockburn2007note}. For completeness, we give a proof here.
  We analyse the twofold saddle-point problem \eqref{eq:aux} with the bilinear forms $d_h: [\PP^1(\Th)]^d \times \PP^0(\Th) \to \mathbb{R}$, $d_h(w_h,s_h) = (s_h,\Div w_h)_\Th$ and 
  $e_h: [\PP^1(\Th)]^d \times \PP^1(\Fh) \to \mathbb{R}$, 
  $e_h(w_h, \hat{s}_h) = (\hat{s}_h, \jmp{w_h \cdot n})_\Fh$. A similar analysis has been carried out in \cite[Lemma 4.3.6]{alemanmaster}. From element-wise $H(\Div)$-interpolation, cf. e.g. \cite[Proposition 2.5.1]{brezzi}, and standard scaling arguments we have that $\jump{w_h\cdot n}$ can be matched with scalar data on facets in the following sense: For all $\hat{s}_h \in \PP^0(\Fh)$, and there holds
  \begin{equation}
    \sup_{w_h \in [\PP^1]^d} \frac{(\hat{s}_h, \jmp{w_h \cdot n})_\Fh}{\nrm{w_h}_{1,h}} \gtrsim \nrm{h^{-\frac12}\hat{s}_h}_{\Fh}.
  \end{equation}
  The kernel of $e_h(\cdot,\cdot)$ are $H(\Div)$-conforming functions in $\BDM^1(\Th)$. For this subspace, we have the following well-known inf-sup result, cf. \cite{hansbolarson02,cockburn2007note}:
  \begin{equation}
    \sup_{w_h \in \BDM^1} \frac{(s_h, \Div w_h)_\Th}{\nrm{w_h}_{1,h}} \gtrsim \nrm{s_h}_{0}.
  \end{equation}
  Hence, we have inf-sup stability of $d_h(\cdot,\cdot)$ on $\ker e_h$ (i.e. functions with normal continuity), inf-sup stability of $e_h(\cdot,\cdot)$ and coercivity of $a_h(\cdot,\cdot)$ on $\ker d_h \cap \ker e_h$ (i.e. divergence free and normal continuous functions). Here we use a slight abuse of notation associating the kernel of the bilinear form with the kernel of the corresponding operator. This implies stability of the twofold saddle-point problem, cf. e.g. \cite{saddle} and thus the stability estimate in the claim.
\end{proof}

We can now state the LBB-stability result for the Trefftz-DG problem:
\begin{lemma}\label{lem:LBB}
  There holds
  \begin{align*}
    &  \inf_{(v_h,q_h) \in \LL} \sup_{(u_h,p_h) \in \HH} \frac{\Kh((u_h,p_h),(v_h,q_h))}{\nrm{(u_h,p_h)}_{\HH} \nrm{(v_h,q_h)}_{\LL}}  \simeq
    \inf_{q_h \in \PP^0} \sup_{(u_h,p_h) \in \HH} \frac{b_h(u_h,q_h)}{\nrm{u_h}_{1,h} \nrm{q_h}_{0,h}}
    \geq c_b,
  \end{align*}
  for a constant $c_b > 0$ independent of $h$ and $\nu$.
\end{lemma}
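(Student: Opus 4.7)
The plan is to prove the two inequalities separately.

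For the first (equivalence), the argument is a direct computation. For $(v_h,q_h)=(0,q_h)\in\LL$, the decomposition \eqref{eq::K_on_TT} combined with the trivialities $a_h(u_h,0)=0$ and $b_h(0,p_h)=0$ gives $\Kh((u_h,p_h),(0,q_h))=b_h(u_h,q_h)$. Combined with the identifications $\nrm{(u_h,p_h)}_\HH\simeq\nu^{1/2}\nrm{u_h}_{1,h}$ for $(u_h,p_h)\in\HH$ and $\nrm{(0,q_h)}_\LL=\nu^{-1/2}\nrm{q_h}_{0,h}$ from \cref{lem:normeq} (using $\Pi^0q_h=q_h$ for $q_h\in\PP^0$), the factors of $\nu$ cancel in the quotient and the stated equivalence follows.

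For the remaining discrete inf-sup, my plan is a Fortin-type construction combining the continuous Stokes LBB with \cref{lem:aux}. Fix $q_h\in\PP^0(\Th)\setminus\RR$ and pick $v\in[H^1_0(\Omega)]^d$ with $\Div v=q_h$ and $\nrm{v}_{H^1}\lesssim\nrm{q_h}_0\simeq\nrm{q_h}_{0,h}$ from the continuous Stokes inf-sup. Using the facet-normal trace of $v$ I construct jump data $\tilde p_h\in\PP^1(\Fhi)$ scaled so that a standard trace inequality gives $\nrm{h^{-1/2}\tilde p_h}_{\Fhi}\lesssim\nrm{v}_{H^1}$, and feed it into \cref{lem:aux} to produce $w_h\in[\PP^1(\Th)]^d$ with $\Div w_h=0$ element-wise, $\jmp{w_h\cdot n}=\tilde p_h$ on $\Fhi$, $w_h\cdot n=0$ on $\partial\Omega$, and $\nrm{w_h}_{1,h}\lesssim\nrm{q_h}_{0,h}$. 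Since $w_h\in[\PP^1]^d$ is harmonic and element-wise divergence-free, $(w_h,0)\in\HH$ by \cref{rem:loworder}, so it is an admissible test candidate.

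To close the inf-sup I then bound $b_h(w_h,q_h)$ from below. Element-wise integration by parts (using $\Div w_h=0$ and $w_h\cdot n|_{\partial\Omega}=0$) yields the two equivalent representations
\[
b_h(w_h,q_h)=(\jmp{w_h\cdot n},\avg{q_h})_{\Fhi}=-(\avg{w_h\cdot n},\jmp{q_h})_{\Fhi},
\]
while on the continuous side $-(\Div v,q_h)_\Omega=-(v\cdot n,\jmp{q_h})_{\Fhi}=-\nrm{q_h}_0^2$, using $\Grad q_h=0$ and $v|_{\partial\Omega}=0$. The key step will be to show that the choice of $\tilde p_h$ together with the optimality conditions of the auxiliary saddle-point problem forces $\avg{w_h\cdot n}$ to inherit enough of the structure of $v\cdot n$ on each interior facet that $b_h(w_h,q_h)\gtrsim\nrm{q_h}_0^2\simeq\nrm{q_h}_{0,h}^2$; together with the energy bound on $w_h$ this produces the $h$- and $\nu$-independent constant $c_b$. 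I expect this matching step to be the main obstacle, since \cref{lem:aux} controls $\jmp{w_h\cdot n}$ explicitly but only controls $\avg{w_h\cdot n}$ implicitly through the optimality of the auxiliary problem; bridging that gap will likely require exploiting the specific saddle-point structure of \cref{lem:aux}, in the spirit of the $H(\Div)$-conforming and hybrid DG stability arguments cited there.
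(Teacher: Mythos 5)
Your reduction of the left-hand side to the scalar inf-sup is correct and is the same (implicit) step the paper takes via \eqref{eq::K_on_TT} and \cref{lem:normeq}. For the core inf-sup bound, however, your plan has a genuine gap, which you have in fact diagnosed yourself: after integrating by parts you are left needing a lower bound on $-(\avg{w_h\cdot n},\jmp{q_h})_{\Fhi}$, i.e.\ you need quantitative control of the \emph{average} of the normal trace of $w_h$, and \cref{lem:aux} only prescribes the \emph{jump} $\jmp{w_h\cdot n}=\tilde p_h$ and an energy bound. The optimality conditions of the auxiliary problem do not hand you $\avg{w_h\cdot n}\approx v\cdot n$ facet-wise in any usable norm, so the ``matching step'' you flag as the main obstacle is not a technicality but the missing proof. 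The detour through the continuous Stokes inf-sup (picking $v$ with $\Div v=q_h$) is also what leads you astray: it aims at controlling $\nrm{q_h}_0$ through the divergence, whereas for $q_h\in\PP^0$ the norm $\nrm{q_h}_{0,h}$ is purely the jump seminorm $\nrm{h^{1/2}\jmp{q_h}}_{\Fhi}$, and the only surviving term of $b_h$ against element-wise divergence-free velocities is the normal-jump term.

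The paper's proof avoids the average entirely. Choose the jump data directly as $\tilde p_h=-\jmp{q_h}\in\PP^0(\Fhi)$ and let $w_h$ solve \eqref{eq:aux}; then $(w_h,0)\in\HH$ as you note. Since $\Div_h w_h=0$, one has $b_h(w_h,q_h)=(\jmp{w_h\cdot n},\avg{q_h})_{\Fh}$, and element-wise integration by parts together with $\Div_h w_h=0$, $\nabla_h q_h=0$ and $\avg{q_h}-q_h=\mp\tfrac12\jmp{q_h}$ gives
\begin{equation*}
  b_h(w_h,q_h)=\sum_{T\in\Th}(\avg{q_h}-q_h,\,w_h\cdot n)_{\partial T}
  =-\tfrac12(\jmp{q_h},\jmp{w_h\cdot n})_{\Fhi}=\tfrac12\nrm{\jmp{q_h}}_{\Fhi}^2,
\end{equation*}
so only the prescribed jump enters. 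Combining this with $\nrm{w_h}_{1,h}\lesssim\nrm{h^{-1/2}\jmp{q_h}}_{\Fhi}$ from \cref{lem:aux} and $\nrm{q_h}_{0,h}=\nrm{h^{1/2}\jmp{q_h}}_{\Fhi}$ yields $b_h(w_h,q_h)\gtrsim\nrm{w_h}_{1,h}\nrm{q_h}_{0,h}$ with constants independent of $h$ and $\nu$. If you replace your Fortin construction by this direct choice of $\tilde p_h$ and this representation of $b_h$, your argument closes; as it stands, the decisive inequality $b_h(w_h,q_h)\gtrsim\nrm{q_h}_{0,h}^2$ is not proved.
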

\begin{proof}
  The first equivalence immediately follows by \cref{lem:normeq}.
  Now let $(v_h,q_h) \in \LL$ be arbitrary, i.e. $v_h = 0$ and $q_h \in \PP^0$.
  We define $\tilde{p}_h = -\jmp{q_h} \in \PP^0(\Fhi)$ on $\Fhi$ and will construct a suitable velocity field $u_h$ that allows to control $\tilde{p}_h$.
  For this we use the element-wise divergence-free space $\BDM^{-1}_0(\Th) =
  \{v \in \BDM^{-1}(\Th) \mid \Div v|_{\Th} = 0\}$. Since for functions $v_h$ in $\BDM^{-1}_0(\Th)$ there holds on each element that $\Delta v_h =
  0$ (since $v_h$ is linear) and $\Div v_h = 0 $, the tuple $(v_h,0) \in \HH$, thus $\BDM^{-1}_0 \times \{0\} \subset \HH$. This gives
  $$
  \inf_{q_h \in \PP^0} \sup_{(u_h,p_h) \in \HH} \frac{b_h(u_h,q_h)}{\nrm{u_h}_{1,h} \nrm{q_h}_{0,h}}
  \geq \inf_{q_h \in \PP^0} \sup_{u_h \in \BDM^{-1}_0(\Th)} \frac{b_h(u_h,q_h)}{\nrm{u_h}_{1,h} \nrm{q_h}_{0,h}}.
  $$
  Now choose $u_h=w_h$ with $w_h$ being the solution of the auxiliary problem \eqref{eq:aux} with data (used for the right hand side) $\tilde{p}_h$. This gives
  \begin{align*}
    b_h(u_h,q_h)  = (\avg{q_h}, \jmp{u_h \cdot n})_{\Fh}  
    &=
    \sum_{T\in\Th} (\avg{q_h}, u_h \cdot n)_{\partial T} - \underbrace{(\Div u_h, q_h)_T}_{=0} \\
    & =\sum_{T\in\Th} (\avg{q_h}-q_h, u_h \cdot n)_{\partial T} + ( u_h, \underbrace{ \nabla q_h}_{=0})_T  \\
    &= -(\frac12 \jmp{q_h}, \jmp{u_h \cdot n})_{\Fhi} = \frac12 \nrm{\jmp{q_h}}_{\Fhi}^2,
  \end{align*}
  and thus by the stability estimates of \cref{lem:LBB} for $u_h=w_h$ we get
  \begin{align*}
    b_h(u_h,q_h) & = \frac12 \nrm{h^{\frac12}\jmp{q_h}}_{\Fhi} \nrm{h^{-\frac12}\jmp{q_h}}_{\Fhi} \gtrsim \nrm{h^{\frac12}\jmp{q_h}}_{\Fhi} \nrm{u_h}_{1,h} \simeq \nrm{q_h}_{0,h} \nrm{u_h}_{1,h},
  \end{align*}
  which concludes the proof.
\end{proof}

\subsection{Inf-Sup-Stability}
\begin{theorem} \label{th:Khinfsup}
  For $(v_h,q_h) \in \TT(\Th)$ there holds
  \begin{equation}
    \sup_{(w_h,r_h) \in \TT} \frac{\Kh((w_h,r_h),(v_h,q_h))}{\nrm{(w_h,r_h)}_{\TT} } \geq c_\TT \nrm{(v_h,q_h)}_{\TT} \geq c^\ast_\TT \nrm{(v_h,q_h)}_{\TT,\ast},
  \end{equation}
  for constants $c_\TT, c^\ast_\TT$ independent of $h$ and $\nu$ and hence the Trefftz-DG problem \eqref{eq:discreteTrefftz} admits a unique solution that continuously depends on the data. 
\end{theorem}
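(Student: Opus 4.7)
The plan is to use a Brezzi--Fortin style saddle-point argument exploiting the decomposition $\TT = \HH \oplus \LL$ together with the splitting of $\Kh$ in \eqref{eq::K_on_TT}. The three prerequisites are all in place: strong coercivity of $\Kh$ on the upper-left block $\HH$ (\cref{lem:coercivity}), LBB stability for the mixed $\HH$--$\LL$ block (\cref{lem:LBB}), and continuity of $\Kh$ in $\nrm{(\cdot,\cdot)}_{\TT,\ast}$ from \eqref{eq:Khcontinuity}. What makes the argument non-routine is that testing naively with $(u_h,p_h) = (v_h,q_h)$ produces a sign-indefinite cross-term $2\,b_h(v_h,\Pi^0 q_h)$ that cannot be absorbed by $\HH$-coercivity alone, so the test has to be designed carefully.

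Given $(v_h,q_h) \in \TT$ I would set $(v_h^\HH,q_h^\HH) := \Pi^\HH(v_h,q_h)$, $q_h^\LL := \Pi^0 q_h$, $A := \nrm{(v_h^\HH,q_h^\HH)}_\HH$, $B := \nrm{(0,q_h^\LL)}_\LL$, so that $\nrm{(v_h,q_h)}_\TT^2 = A^2 + B^2$. Using \cref{lem:LBB} together with \cref{lem:normeq} (to convert the unweighted LBB bound into the $\HH$/$\LL$-weighted form, the $\nu$ factors cancelling cleanly) I pick $(\tilde u_h,\tilde p_h) \in \HH$ with $\nrm{(\tilde u_h,\tilde p_h)}_\HH \simeq B$ and $b_h(\tilde u_h, q_h^\LL) \gtrsim B^2$. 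The crucial twist is then to test with a \emph{minus} sign on the $\LL$-component,
\[
  (u_h,p_h) := (v_h^\HH,q_h^\HH) \;-\; (0,q_h^\LL) \;+\; \beta\,(\tilde u_h,\tilde p_h),
\]
for a small parameter $\beta > 0$ to be fixed below.

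Expanding via \eqref{eq::K_on_TT} and using $\Pi^0 \tilde p_h = 0$ (since $(\tilde u_h,\tilde p_h) \in \HH$), $\Pi^0 p_h = -q_h^\LL$ and $(\id - \Pi^0) p_h = q_h^\HH + \beta\tilde p_h$, the two indefinite contributions $\pm b_h(v_h,q_h^\LL)$ arising from the two $b_h$-slots in \eqref{eq::K_on_TT} cancel exactly, leaving
\[
  \Kh((u_h,p_h),(v_h,q_h)) = \Kh((v_h^\HH,q_h^\HH),(v_h^\HH,q_h^\HH)) + \beta\,\Kh((\tilde u_h,\tilde p_h),(v_h^\HH,q_h^\HH)) + \beta\,b_h(\tilde u_h,q_h^\LL).
\]
Applying \cref{lem:coercivity} to the first term, continuity of $\Kh|_{\HH\times\HH}$ in $\nrm{\cdot}_\HH$ (which follows from \eqref{eq:Khcontinuity} together with the equivalence of $\nrm{\cdot}_\TT$ and $\nrm{\cdot}_{\TT,\ast}$ on $\TT$) to the second, and the LBB-based lower bound to the third yields $\geq c_\HH A^2 - \beta C\,A\,B + \beta c_b' B^2$. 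Young's inequality and a choice of $\beta$ depending only on $c_\HH$, $c_b$, $C$ then give $\gtrsim A^2 + B^2 = \nrm{(v_h,q_h)}_\TT^2$. Combining this with the triangle-inequality bound $\nrm{(u_h,p_h)}_\TT \lesssim A + B \lesssim \nrm{(v_h,q_h)}_\TT$ proves the first inequality in the claim; the second, $\nrm{(v_h,q_h)}_\TT \geq c_\TT^\ast \nrm{(v_h,q_h)}_{\TT,\ast}$, is precisely the norm equivalence on $\TT$ recorded after the definition of $\nrm{\cdot}_{\TT,\ast}$, and well-posedness then follows from the standard Babuška--Nečas argument applied to the finite-dimensional problem. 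The only non-routine point is the sign choice that triggers the $b_h(v_h,q_h^\LL)$ cancellation; once that is in place, everything else is the textbook inf-sup recipe.
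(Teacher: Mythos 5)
Your argument is correct and is essentially the paper's proof: the paper simply invokes Brezzi's theorem with \cref{lem:coercivity}, \cref{lem:LBB}, the continuity \eqref{eq:Khcontinuity} and the norm equivalence on $\TT$, while you unpack that theorem by constructing the explicit supremizer $(v_h^\HH,q_h^\HH)-(0,q_h^\LL)+\beta(\tilde u_h,\tilde p_h)$. The sign flip on the $\LL$-component and the smallness of $\beta$ are exactly the standard mechanics hidden inside Brezzi's theorem, so no new idea is needed and none is missing.
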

\begin{proof}
    Together with \cref{lem:coercivity} and \cref{lem:LBB} and the continuity \eqref{eq:Khcontinuity} we have shown that $\Kh(\cdot,\cdot)$ is inf-sup-stable on $\TT$ and thus well-posed, see e.g. \cite[Lemma 1.30]{di2011mathematical}.
\end{proof}

\subsection{Quasi-best approximation and Aubin-Nitsche}
In the following, we consider a best approximation result in the Trefftz space and discuss
an Aubin-Nitsche-like result for the $L^2$-error of the velocity.

\begin{lemma}
  Let $(u,p) \in [H^{2}(\Th)]^d \times H^{1}(\Th) /\, \RR$ be
  the solution of the Stokes problem~\eqref{eq:weakbasicpde} and
  $(u_h,p_h) \in \TT_{f,g}(\Th)$ be the discrete solution to
  \eqref{eq:discreteTrefftz}. Then, there holds
  \begin{equation}
    \nrm{(u_h-u,p_h-p)}_{\TT} \leq \nrm{(u_h-u,p_h-p)}_{\TT,\ast} \lesssim \inf_{(v_h,q_h) \in \TT_{f,g}} \nrm{(u-v_h,p-q_h)}_{\TT,\ast}.
  \end{equation}
\end{lemma}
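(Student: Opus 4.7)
The first inequality is trivial, since $\nrm{\cdot}_\TT \leq \nrm{\cdot}_{\TT,\ast}$ by construction, so the work lies in establishing the second (Céa-type) bound. My plan is the classical Galerkin-orthogonality argument, adapted to the affine setting $\TT_{f,g} = \TT + (u_{h,p},p_{h,p})$: combine consistency, the inf-sup stability of \cref{th:Khinfsup} on $\TT$, continuity of $\Kh$, and a triangle inequality. The key structural observation is that although $(u_h,p_h),(v_h,q_h)\in\TT_{f,g}$ lie in an affine space, their difference lies in the linear Trefftz space $\TT$, so \cref{th:Khinfsup} applies to it directly.

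Concretely, I would proceed as follows. First, establish Galerkin orthogonality: for $(u,p)\in [H^2(\Th)]^d\times H^1(\Th)$ all element-interface traces appearing in $\Kh$ are well-defined, and using that $u$ is continuous across interior facets (so $\jmp{u}=0$) together with integration by parts element-wise, I would verify
\begin{equation*}
  \Kh((u,p),(w_h,r_h)) = (f,w_h)_\Th + (g,r_h)_\Th \qquad \forall (w_h,r_h)\in \X_h,
\end{equation*}
and in particular for all $(w_h,r_h)\in \TT\subset \X_h$. Subtracting \eqref{eq:discreteTrefftz} this gives $\Kh((u_h-u,p_h-p),(w_h,r_h))=0$ for $(w_h,r_h)\in\TT$. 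Second, fix an arbitrary $(v_h,q_h)\in\TT_{f,g}$ and set $(e_h,\pi_h):=(u_h-v_h,p_h-q_h)\in\TT$. From Galerkin orthogonality I obtain
\begin{equation*}
  \Kh((e_h,\pi_h),(w_h,r_h)) = \Kh((u-v_h,p-q_h),(w_h,r_h)) \qquad \forall (w_h,r_h)\in\TT.
\end{equation*}

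Third, apply \cref{th:Khinfsup} to $(e_h,\pi_h)\in\TT$, then use continuity \eqref{eq:Khcontinuity} on the right-hand side and the equivalence of $\nrm{\cdot}_\TT$ and $\nrm{\cdot}_{\TT,\ast}$ on the discrete space $\TT$ (stated in \cref{sec:analysis}) to control the test-function norm:
\begin{equation*}
  \nrm{(e_h,\pi_h)}_{\TT,\ast}
  \simeq \nrm{(e_h,\pi_h)}_{\TT}
  \lesssim \sup_{(w_h,r_h)\in\TT}\frac{\Kh((u-v_h,p-q_h),(w_h,r_h))}{\nrm{(w_h,r_h)}_\TT}
  \lesssim \nrm{(u-v_h,p-q_h)}_{\TT,\ast}.
\end{equation*}
Finally, a triangle inequality in the stronger norm,
\begin{equation*}
  \nrm{(u-u_h,p-p_h)}_{\TT,\ast}
  \leq \nrm{(u-v_h,p-q_h)}_{\TT,\ast} + \nrm{(e_h,\pi_h)}_{\TT,\ast}
  \lesssim \nrm{(u-v_h,p-q_h)}_{\TT,\ast},
\end{equation*}
together with taking the infimum over $(v_h,q_h)\in\TT_{f,g}$ yields the claim.

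The only delicate point I anticipate is the verification of consistency in the presence of DG interface terms with minimal regularity of the exact solution: I need $\nu\partial_n u$ and $p$ to admit well-defined single-valued facet traces (granted by $u\in [H^2(\Th)]^d$ and $p\in H^1(\Th)$, and by the continuity of the exact stress/velocity), so that the DG boundary contributions telescope to zero correctly; this is routine but must be spelled out. Everything else reduces to applying tools already established: \cref{th:Khinfsup} on $\TT$, the continuity bound \eqref{eq:Khcontinuity} on the sum $\TT + [H^2(\Th)]^d\times H^1(\Th)$, and the norm equivalence between $\nrm{\cdot}_{\TT}$ and $\nrm{\cdot}_{\TT,\ast}$ on discrete functions.
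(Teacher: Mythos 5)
Your proposal is correct and follows essentially the same route as the paper: the paper also takes an arbitrary $(v_h,q_h)\in\TT_{f,g}$, notes that the difference $(u_h-v_h,p_h-q_h)$ lies in the linear space $\TT$, invokes the supremizer from \cref{th:Khinfsup}, replaces the discrete solution by the exact one via consistency (your Galerkin orthogonality), bounds with continuity \eqref{eq:Khcontinuity}, and finishes with the triangle inequality. The only cosmetic difference is that you invoke the $\nrm{\cdot}_{\TT}\simeq\nrm{\cdot}_{\TT,\ast}$ equivalence on $\TT$ explicitly, whereas the paper has already folded it into the second inequality of \cref{th:Khinfsup}.
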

\begin{proof} For $(v_h,q_h) \in \TT_{f,g}$ arbitrary we first have $(u_h-v_h,p_h-q_h) \in \TT$. 
  Next, let $(w_h,r_h) \in \TT$ be a supremizer in \cref{th:Khinfsup} to $(u_h-v_h,p_h-q_h) \in \TT$, then
  \begin{align*}
    c^\ast_\TT  \nrm{(u_h-v_h,p_h-q_h)}_{\TT,\ast} & \leq \frac{\Kh((w_h,r_h),(u_h-v_h,p_h-q_h))}{\nrm{(w_h,r_h)}_{\TT} } \\ & = \frac{\Kh((w_h,r_h),(u-v_h,p-q_h))}{\nrm{(w_h,r_h)}_{\TT} }
    \lesssim \nrm{(u-v_h,p-q_h)}_{\TT,\ast},
  \end{align*}
  where we made use of the consistency of the formulation, and continuity~\eqref{eq:Khcontinuity}. Finally, the claim follows with an application of the triangle inequality.
\end{proof}

To show error estimates in the $L^2$-norm we use a duality argument in the style of Aubin-Nitsche. This requires additional regularity for the solution of the Stokes problem, see e.g. \cite{10.3792/pjaa.67.171} for regularity results of the Stokes problem mentioned below.

\begin{theorem} \label{th::aubin}
  Assume the domain boundary to be sufficiently smooth or $\Omega$ to
  be convex so that $L^2$-$H^2$-regularity holds and let $(u,p) \in
  [H^{2}(\Th)]^d \times H^{1}(\Th) /\, \RR$, be the solution of
  the Stokes problem \eqref{eq:weakbasicpde} and $(u_h,p_h) \in
  \TT_{f,g}(\Th)$ be the discrete solution to \eqref{eq:discreteTrefftz}.
  Then, there holds
  \begin{equation} \label{eq::aubinerror}
    \nrm{u-u_h}_{\Omega} \lesssim h \nrm{(u_h-u,p_h-p)}_{\TT,\ast}.
  \end{equation}
\end{theorem}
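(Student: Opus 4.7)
The plan is to deploy an Aubin--Nitsche duality argument adapted to the Trefftz constraint. First, introduce the dual Stokes problem
\[
  -\nu\Delta z + \nabla\xi = e_u := u-u_h,\quad -\Div z=0 \text{ in }\Omega,\quad z|_{\partial\Omega}=0,
\]
and invoke the $L^2$--$H^2$ regularity assumption to get $\nu\|z\|_{H^2}+\|\xi\|_{H^1}\lesssim\|e_u\|_\Omega$. Since $z$ is smooth, divergence-free, and vanishes on the boundary, an element-wise integration by parts in $K_h$ (in which all jump and boundary contributions collapse) yields the adjoint-consistency identity $\|e_u\|_\Omega^2 = K_h((e_u,e_p),(z,\xi))$.

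Next, combining this identity with the Galerkin orthogonality $K_h((e_u,e_p),(w_h,r_h))=0$ for every $(w_h,r_h)\in\TT$ and the continuity \eqref{eq:Khcontinuity} reduces the proof to an approximation problem:
\[
  \|e_u\|_\Omega^2 \lesssim \|(e_u,e_p)\|_{\TT,*}\inf_{(w_h,r_h)\in\TT}\|(z-w_h,\xi-r_h)\|_{\TT,*}.
\]

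The main obstacle is controlling this infimum: since $(z,\xi)$ does not satisfy the homogeneous Trefftz constraints (as $e_u\neq 0$), no element of $\TT$ approximates it at rate $h$ in general. My proposed resolution is to introduce an intermediate approximation $(\hat z,\hat\xi)\in\TT_{\Pi^{k-2}e_u,0}$ drawn from the shifted Trefftz space associated with the dual forcing, and to decompose $(\hat z,\hat\xi) = (\bar w,\bar r)+(z_p,\xi_p)$ with $(\bar w,\bar r)\in\TT$ and $(z_p,\xi_p)$ a particular Trefftz solution. Galerkin orthogonality eliminates the $(\bar w,\bar r)$-contribution in the dual pairing, leaving two terms to control. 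The first, $K_h((e_u,e_p),(z-\hat z,\xi-\hat\xi))$, is handled by continuity plus the quasi-best approximation of $(z,\xi)$ from $\TT_{\Pi^{k-2}e_u,0}$, which by the primal analysis applied to the dual problem gives a factor $h\,\nu^{-1}\|e_u\|_\Omega$. For the second, $K_h((e_u,e_p),(z_p,\xi_p))$, the elementwise surjectivity in \cref{lem:surj} enables selecting a particular solution with $\|(z_p,\xi_p)\|_{\TT,*}\lesssim h\,\nu^{-1}\|\Pi^{k-2}e_u\|_\Omega \leq h\,\nu^{-1}\|e_u\|_\Omega$ via a scaling/Bramble--Hilbert estimate on a polynomial right-inverse of the elementwise Stokes operator. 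Dividing through by $\|e_u\|_\Omega$ then yields the claim.
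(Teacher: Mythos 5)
Your overall skeleton coincides with the paper's: dual Stokes problem with right-hand side $(u-u_h,0)$, elliptic regularity, adjoint consistency giving $\nrm{u-u_h}_\Omega^2=\Kh((u-u_h,p-p_h),(z,\xi))$, Galerkin orthogonality over $\TT$, and continuity \eqref{eq:Khcontinuity}. You also correctly identify the one genuine difficulty, namely that $(z,\xi)$ is \emph{not} in the homogeneous Trefftz class because the dual forcing $u-u_h$ is nonzero. Where you diverge is in how this is resolved. The paper's resolution is a one-line observation: take $w_h=\Pi^{\BDM,1}z$ and $r_h=\Pi^0\xi$. Since $w_h$ is piecewise linear, $\Delta_h w_h=0$; since $r_h$ is piecewise constant, $\Grad_h r_h=0$; and the commuting property of the $\BDM$ interpolant gives $\Div_h w_h=\Pi^0\Div z=0$. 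Hence $(w_h,r_h)\in\TT^1\subset\TT^k$ \emph{automatically} — the lowest-order block of the Trefftz space sees no inhomogeneity at all (for $k=1$ the constraint $-\Delta_h u_h+\Grad_h p_h=\Pi^{k-2}f$ is vacuous) — and standard $\BDM$ interpolation estimates finish the proof. Your route, by contrast, passes through the shifted space $\TT_{\Pi^{k-2}e_u,0}$ and a particular solution $(z_p,\xi_p)$; it can be made to work, but it is strictly more laborious and, as written, leaves two steps unproven.

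Concretely: (i) the quasi-best approximation lemma \eqref{eq:approx} is stated for $(u,p)\in[H^{k+1}]^d\times H^k$, whereas the dual solution only has $H^2\times H^1$ regularity, so you need (and should state) a low-order/low-regularity variant of that lemma — which, once written down, essentially reduces to the paper's $\BDM^1\times\PP^0$ construction anyway. (ii) The bound $\nrm{(z_p,\xi_p)}_{\TT,\ast}\lesssim h\,\nu^{-1}\nrm{\Pi^{k-2}e_u}_\Omega$ does not follow from \cref{lem:surj}, which only asserts surjectivity of the local Stokes operator with no norm control on a right inverse. To justify it you must fix a specific right inverse (e.g.\ minimal $L^2$-norm) on each element and run a scaling/shape-regularity argument to show the two orders of $h$ are gained uniformly over the mesh family; note the Laplacian does not transform cleanly under general affine maps, so this is a finite-dimensional norm-equivalence argument rather than a pure pull-back. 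Both gaps are fillable, but the paper's choice of test function makes the entire particular-solution machinery unnecessary, and I would recommend adopting it.
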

  \begin{proof}
    We apply the Aubin-Nitsche trick and pose the auxiliary adjoint Stokes problem $\calL(w,r) =
    (u-u_h,0)$ on $\Omega$. With $u-u_h \in [L^2(\Omega)]^d$ and the assumed
    $L^2$-$H^2$-regularity, we have $(w,r) \in [H^2(\Omega)]^d \times H^1(\Omega)$.

    Now, from the (adjoint) consistency of the bilinear forms
    $a_h(\cdot,\cdot)$ and $b_h(\cdot,\cdot)$ we have
    \begin{align*}
      \nrm{u-u_h}_{\Omega}^2
      =&a_h(w,u-u_h) + b_h(u-u_h,r) \quad\text{and}\quad b_h(w,p-p_h) = 0.
    \end{align*}      
    Due to the consistency of the primal problem, we can add the following
    expression which adds up to zero for any $(w_h,r_h) \in \TT$:
    \begin{align*}
      \nrm{u-u_h}_{\Omega}^2 =&a_h(w,u-u_h) + b_h(u-u_h,r) + b_h(w, p-p_h) 
       - a_h(w_h,u-u_h) \\ & -b_h(u-u_h,r_h) - b_h(w_h,p-p_h) 
        \\ 
         =&a_h(w-w_h,u-u_h) + b_h(u-u_h,r-r_h) + b_h(w-w_h, p-p_h). 
    \end{align*}
    With $w_h = \Pi^{\BDM,1} w$, the $\BDM^1$-interpolant of $w$, we have
    $\Div w_h = \Pi^0 \Div w = 0$ and hence with $r_h = \Pi^0 r$ there holds
    $(w_h,r_h) \in \TT^1 \subset \TT$. 
    Now applying
    standard interpolation results for the $\BDM$-interpolator as well as exploiting the $L^2$-$H^2$-regularity yields the bound 
    $$ \nrm{ (w-w_h,r-r_h) }_{\TT,\ast} \lesssim h \left( \snrm{w}_{H^2(\Omega)} +
      \snrm{r}_{H^1(\Omega)} \right) \lesssim h \nrm{u-u_h}_{\Omega}.$$
    We can then conclude with continuity~\eqref{eq:Khcontinuity} to obtain
    \begin{align*}
      \nrm{u-u_h}_{\Omega}^2
      =&a_h(w-w_h,u-u_h) + b_h(u-u_h,r-r_h) + b_h(w-w_h, p-p_h) \\
      \lesssim&  \nrm{ (w-w_h,r-r_h) }_{\TT,\ast} \nrm{ (u-u_h,p-p_h) }_{\TT,\ast}
      \\
      \lesssim&  h \nrm{u-u_h}_{\Omega} \nrm{ (u-u_h,p-p_h) }_{\TT,\ast}.
    \end{align*}
    Dividing by $\nrm{u-u_h}_{\Omega}$ then yields the claim.
    \end{proof}

\subsection{Approximation and a-priori error bounds}

We conclude the stability analysis with the following optimal error estimate. 

\begin{lemma}
Let $(u,p) \in [H^{k+1}(\Th)]^d\cap [H^1(\Omega)]^d \times H^{k}(\Th)
/\, \RR$ be the solution of the Stokes problem.
  There holds
  \begin{equation} \label{eq:approx}
    \inf_{(v_h,q_h) \in \TT_{f,g}} \nrm{(u-v_h,p-q_h)}_{\TT,\ast} \lesssim
    \nu^{\frac12} h^{k} \snrm{ u}_{H^{k+1}(\Th)} + \nu^{-\frac12} h^{k} \snrm{p}_{H^{k}(\Th)}.
  \end{equation}
\end{lemma}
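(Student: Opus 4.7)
My plan is to construct $(v_h,q_h) \in \TT_{f,g}$ by correcting a standard polynomial approximant of $(u,p)$ into the Trefftz space, using the element-wise surjectivity of the local Stokes operator $\calL$ established in \cref{lem:surj}. I would start by taking $(\hat u, \hat p) := (\Pi^k u, \Pi^{k-1} p) \in [\PP^k]^d \times \PP^{k-1}$, or any other $H^1$-stable element-wise interpolant. Classical polynomial approximation on simplices together with standard trace inequalities yields $\nrm{u - \hat u}_{H^m(T)} \lesssim h_T^{k+1-m}\snrm{u}_{H^{k+1}(T)}$ for $m=0,1,2$ and $\nrm{p - \hat p}_{H^m(T)} \lesssim h_T^{k-m}\snrm{p}_{H^{k}(T)}$ for $m=0,1$, together with the analogous $\partial T$-trace bounds. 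Assembling these with the $\nu$-weights appearing in the $\TT,\ast$-norm already delivers $\nrm{(u - \hat u, p - \hat p)}_{\TT,\ast}\lesssim \nu^{\frac12} h^k \snrm{u}_{H^{k+1}(\Th)} + \nu^{-\frac12} h^k \snrm{p}_{H^{k}(\Th)}$.

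Since $(\hat u, \hat p) \notin \TT_{f,g}$ in general, I would next quantify its Trefftz defect $(r_m, r_c) := (\Pi^{k-2}f + \Delta \hat u - \nabla \hat p,\ \Pi^{k-1} g + \Div \hat u)$. Using the strong form of the Stokes PDE and the fact that $\Delta \hat u, \nabla \hat p$ already lie in $[\PP^{k-2}(\Th)]^d$, these defects rewrite as element-wise $L^2$-projections of differentiated approximation errors of $\hat u - u$ and $\hat p - p$. Inserting the estimates from the first step produces the local bounds $\nrm{r_m}_{L^2(T)} \lesssim h_T^{k-1}(\snrm{u}_{H^{k+1}(T)} + \snrm{p}_{H^{k}(T)})$ and $\nrm{r_c}_{L^2(T)} \lesssim h_T^k \snrm{u}_{H^{k+1}(T)}$, up to the appropriate $\nu$-weight inherited from the formulation.

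The final step is the correction. By \cref{lem:surj}, $\calL$ is surjective between the finite-dimensional local spaces $[\cP^k(T)]^d \times \cP^{k-1}(T)$ and $[\cP^{k-2}(T)]^d \times \cP^{k-1}(T)$ and therefore admits a bounded linear right inverse on the reference element. A scaling argument that tracks the homogeneity degree $2$ of $\Delta$ and degree $1$ of $\nabla, \Div$ transports this to a right inverse $\calL_T^\dagger$ on each $T$, producing a correction $(\delta u, \delta p) \in [\cP^k(T)]^d \times \cP^{k-1}(T)$ satisfying $\calL(\delta u, \delta p) = (r_m, r_c)$ together with the element-wise estimate $\nrm{\nabla \delta u}_{L^2(T)} + h_T^{-1}\nrm{\delta p}_{L^2(T)} \lesssim h_T \nrm{r_m}_{L^2(T)} + \nrm{r_c}_{L^2(T)}$. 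Combining this with the defect bounds and controlling the jump and $h^{1/2}\partial_n$-trace contributions in $\nrm{\cdot}_{\TT,\ast}$ via discrete trace and inverse inequalities on polynomials yields $\nrm{(\delta u, \delta p)}_{\TT,\ast}\lesssim \nu^{\frac12} h^k \snrm{u}_{H^{k+1}(\Th)} + \nu^{-\frac12} h^k \snrm{p}_{H^{k}(\Th)}$. Setting $(v_h,q_h) := (\hat u + \delta u, \hat p + \delta p) \in \TT_{f,g}$ and applying the triangle inequality then concludes the argument.

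The hardest part will be the scaling analysis of the element-wise right inverse of $\calL$. Existence is immediate from \cref{lem:surj}, but I need to verify that a careful reference-to-physical pull-back produces constants that are independent of $h$ and $\nu$ and, crucially, that the resulting powers of $h_T$ match the asymmetric velocity and pressure weights of $\nrm{\cdot}_{\TT,\ast}$ exactly. This requires bookkeeping because $\calL$ couples the two fields with incompatible homogeneity degrees, so the $h_T$-factor arising on $r_m$ (from $\Delta$) and the absence of such a factor on $r_c$ (from $\Div$) must align with the $L^2$- and $H^1$-weights used to measure the correction; any loss in this balance would spoil the optimal $h^k$ rate in either velocity or pressure.
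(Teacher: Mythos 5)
Your construction is correct in outline but takes a genuinely different route from the paper. The paper works globally: it introduces the exact solution $(u',p')$ of the continuous Stokes problem with the \emph{projected} data $(\Pi^{k-2}f,\Pi^{k-1}g)$ and applies the element-wise averaged Taylor polynomial $T^k$, whose commutation with differentiation makes $(T^k u', T^{k-1}p')$ land in $\TT_{f,g}$ automatically; the error then splits into a Taylor approximation error plus the perturbation $(u-u',p-p')$, which is controlled by continuous stability together with an orthogonality/negative-norm estimate for $f-\Pi^{k-2}f$ that recovers the missing power of $h$. You instead work entirely element-by-element: interpolate, measure the Trefftz defect, and remove it with a scaled local right inverse of $\calL$ furnished by \cref{lem:surj}. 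Your route avoids the auxiliary perturbed Stokes problem and the $H^{-1}$ data-oscillation argument, and is arguably more self-contained; the price is exactly the point you flag, namely uniform bounds for the family of local right inverses, which the paper's commutation trick sidesteps entirely.

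Two caveats on your sketch. First, the "reference-to-physical pull-back" is more delicate than pure $h_T$-homogeneity bookkeeping: under a general affine map the Laplacian does not pull back to the reference Laplacian but to an anisotropic operator $\Div(M\nabla\cdot)$, so \cref{lem:surj} does not directly give surjectivity of the pulled-back operator. The clean fix is to use only translations and dilations (so $\calL$ scales by pure powers of $h_T$ as you claim) and to obtain uniformity of the right-inverse norm over the resulting compact family of unit-diameter shape-regular simplices by a continuity/compactness argument; alternatively, conjugate the anisotropic operator back to the isotropic one by a further linear change of variables. Either way this step needs to be written out. Second, your claimed correction estimate $\nrm{\nabla\delta u}_{L^2(T)}+h_T^{-1}\nrm{\delta p}_{L^2(T)}\lesssim h_T\nrm{r_m}_{L^2(T)}+\nrm{r_c}_{L^2(T)}$ is one power of $h_T$ too strong in the pressure: the natural scaling yields only $\nrm{\delta p}_{L^2(T)}\lesssim h_T\nrm{r_m}_{L^2(T)}+\nrm{r_c}_{L^2(T)}$ (e.g.\ $r_m$ a constant vector forces $\delta p$ of size $h_T\nrm{r_m}$ whenever the gradient part of $r_m$ cannot be absorbed into $\Delta\delta u$). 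Since the $\nrm{\cdot}_{\TT,\ast}$-norm measures the pressure through $\nu^{-1/2}\nrm{h\nabla\delta p}_{\Th}\lesssim\nu^{-1/2}\nrm{\delta p}_{\Th}$, the weaker, correct estimate still delivers the $h^k$ rate with the right $\nu$-weights, so this is a misstatement rather than a gap — but you should state the achievable bound. Also note that optimal approximation of $\hat u$ in $H^2(T)$ (not merely $H^1$-stability) is what you actually need to bound $r_m$; the $L^2$-projection on shape-regular simplices provides it.
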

\begin{proof}
  Let $(u',p') \in [H^{2}(\Omega)]^d \times H^{1}(\Omega)$ be the
  solutions to the Stokes problem with right-hand side data
  $\Pi^{k-2}f$ and $\Pi^{k-1}g$. Let $T^k: L^1(\Th) \to \PP^k(\Th)$
  denote the element-wise (potentially vectorial) averaged Taylor
  polynomial (averaged on a proper inner ball of each element) of
  degree $k$, cf. \cite[Section 4.1-4.3]{brennerscott}. Then, we set $v_h = T^k u'$ and
  $q_h = T^{k-1} p'$ and have on each element
\begin{align*}
  -\Delta v_h + \Grad q_h\! &=\! - \Delta T^k u' + \Grad T^{k-1} p'\! 
  = T^{k-2}(\! \overbrace{- \Delta u' + \Grad \!p'}^{\Pi^{k-2} f} )\! = \Pi^{k-2} f, \\
 \Div v_h &= \Div T^k u' = T^{k-1} \Div u' = T^{k-1} \Pi^{k-1} g = \Pi^{k-1} g.
\end{align*}
Hence $(v_h,q_h) \in \TT_{f,g}$.
Now we bound:
\begin{align*}
  \nrm{(u-v_h,&p-q_h)}_{\TT,\ast} 
= \nrm{(u-T^ku',p-T^{k-1}p')}_{\TT,\ast} \\
 &\leq \nrm{(u-T^ku,p-T^{k-1}p)}_{\TT,\ast} + \nrm{(T^k(u-u'),T^{k-1}(p-p'))}_{\TT,\ast} = 
 I + II.
\end{align*}
The first part, $I$, can be directly bounded by the right-hand side of \eqref{eq:approx} due to the approximation properties of the averaged Taylor polynomial, cf. \cite[Lemma 4.3.8]{brennerscott}.
We hence turn our attention to $II$. As the terms in $II$ are discrete functions (piecewise polynomials), we can apply inverse inequalities to obtain
\begin{align*}
  II & \lesssim  \nu^{\frac12} \snrm{\nabla T^k (u-u')}_{\Th} + \nu^{\frac12} \nrm{h^{-\frac12} \jmp{T^k (u-u')}}_{\Fh} + \nu^{-\frac12} \nrm{T^{k-1}(p - p')}_{\Omega} \\ &= \nu^{\frac12} (II_a + II_b) + \nu^{-\frac12} II_c.
\end{align*}
Define $v
= u -u'$. Then we have by the properties of the averaged Taylor
polynomial, cf. \cite[Lemma 4.3.8]{brennerscott},
\begin{align*}
  II_a & \lesssim  \snrm{ \nabla (T^k - \id) v}_{\Th} + \snrm{ \nabla v}_{\Th} \lesssim \snrm{ \nabla v}_{\Th}.
\end{align*}
Let $\Pi_C$ be the (vector-valued) Clément interpolation operator $\Pi_C:
[H^1(\Omega)]^d \to [\PP^1(\Th)]^d\cap [C^0(\Omega)]^d$, see \cite{clement}.
  Now, using (in order),
  $\Pi_C v \in C^0(\Omega)$, 
  a trace inequality, triangle inequalities,   
  the interpolation properties of $T^k$ and $\Pi_C$,
  and (local) $H^1$-continuity of $T^k$ and $\Pi_C$ 
  we have
  \begin{align*}
  II_b & \lesssim \nrm{h^{-\frac12} \jmp{ (T^k - \Pi_C) v}}_{\partial \Th} \lesssim  \nrm{ h^{-1}(T^k - \Pi_C) v}_{\Th} + \nrm{\nabla (T^k - \Pi_C) }_{\Th} \\
 &\lesssim  \nrm{ h^{-1}(T^k - \id) v}_{\Th} + \nrm{h^{-1} (\Pi_C - \id) v}_{\Th}
 + \nrm{\nabla T^k v }_{\Th} + \nrm{\nabla \Pi_C v }_{\Th} \lesssim  \snrm{ \nabla v}_{\Th}.
 \end{align*}
Similarly we also have $II_c  =  \nrm{ T^k (p-p')}_{\Omega} \lesssim
 \nrm{ p-p'}_{\Omega}$, hence 
 $$II \leq \nu^{\frac12}  \nrm{ u-u'}_{H^1(\Omega)} + \nu^{-\frac12}
 \nrm{ p-p'}_{\Omega}.$$ We note that $(u-u', p-p')$ solves the Stokes
 problem for the data $(f- \Pi^{k-2}f,g-\Pi^{k-1}g)$. Exploiting
 linearity and stability of the continuous problem (see \cite[Theorem 8.2.1]{di2011mathematical}) and that $f-
 \Pi^{k-2}f$ is ($L^2$-)orthogonal to $\Pi^0 v$ for all $v \in
 H^1(\Omega)$ we have
\begin{align*}
  II & \lesssim \nu^{-\frac12} \nrm{f - \Pi^{k-2} f}_{H^{-1}(\Omega)} + \nu^{-\frac12} \nrm{g - \Pi^{k-1} g}_{0} \\
  & \lesssim \nu^{-\frac12} \sup_{v \in H^1(\Omega)} \frac{ (f - \Pi^{k-2} f,v)}{\nrm{v}_{H^{1}(\Omega)}} + \nu^{-\frac12} \snrm{h^k \ g}_{H^k(\Th)} \\
  & = \nu^{-\frac12} \sup_{v \in H^1(\Omega)} \frac{ ((\id - \Pi^{k-2}) f,v - \Pi^0 v)}{\nrm{v}_{H^{1}(\Omega)}} + \nu^{-\frac12} \snrm{h^k \ g}_{H^k(\Th)} \\
  & \lesssim \nu^{-\frac12} \sup_{v \in H^1(\Omega)} \frac{\nrm{h(\id - \Pi^{k-2}) f}_{\Th} \snrm{ v}_{H^1(\Th)}}{\nrm{v}_{H^{1}(\Omega)}} + \nu^{-\frac12} \snrm{h^k\ g}_{H^k(\Th)} \\
  &   \lesssim \nu^{-\frac12} \nrm{h (\id - \Pi^{k-2}) f}_{\Th} + \nu^{-\frac12} \snrm{h^k \ g}_{H^k(\Th)}
  \\
  & \lesssim \nu^{-\frac12} \left( \snrm{h^k f}_{H^{k-1}(\Th)} + \snrm{h^k \ g}_{H^k(\Th)} \right)
    \lesssim h^k \left( \nu^{\frac12} \snrm{u}_{H^{k+1}(\Th)} + \nu^{-\frac12}  \snrm{p}_{H^k(\Th)} \right),
\end{align*}
what concludes the proof.

\end{proof}

\begin{cor}
  Let $(u,p) \in [H^{k+1}(\Th)]^d\cap [H^1(\Omega)]^d \times H^{k}(\Th) /\, \RR$ be the solution of the Stokes problem and $(u_h,p_h) \in \TT_{f,g}(\Th)$ be the discrete solution to \eqref{eq:discreteTrefftz}. 
  Then, there holds
  \begin{equation} \label{eq:aprioribound1}
    \nrm{(u-v_h,p-q_h)}_{\TT} \lesssim \nu^{\frac12} h^{k} \snrm{u}_{H^{k+1}(\Th)} + \nu^{-\frac12} h^{k} \snrm{p}_{H^{k}(\Th)}.
  \end{equation}
  Assuming $L^2$-$H^2$-regularity there further holds
  \begin{equation}
    \nrm{u-u_h}_{\Omega} \lesssim \nu^{\frac12} h^{k+1} \snrm{u}_{H^{k+1}(\Th)} + \nu^{-\frac12} h^{k+1} \snrm{p}_{H^{k}(\Th)}.
  \end{equation}
\end{cor}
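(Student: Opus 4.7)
The proof of this corollary is essentially a chain of previously established results; the real work has already been done in the quasi-best approximation lemma, the Aubin--Nitsche type theorem, and the approximation lemma immediately preceding the corollary.

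The plan is as follows. For the first estimate \eqref{eq:aprioribound1}, I would start from the quasi-best approximation result
\[
  \nrm{(u-u_h,p-p_h)}_{\TT} \lesssim \nrm{(u-u_h,p-p_h)}_{\TT,\ast} \lesssim \inf_{(v_h,q_h)\in\TT_{f,g}} \nrm{(u-v_h,p-q_h)}_{\TT,\ast},
\]
which holds since $(u,p)\in[H^2(\Th)]^d\times H^1(\Th)\setminus\RR$ under our regularity assumption (in particular because $H^{k+1}(\Th)\cap H^1(\Omega)\subset H^2(\Th)$ for $k\geq 1$). Then I would invoke the approximation lemma \eqref{eq:approx} to bound the right-hand side by $\nu^{\frac12} h^k \snrm{u}_{H^{k+1}(\Th)} + \nu^{-\frac12} h^k \snrm{p}_{H^k(\Th)}$. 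This directly yields the first claim.

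For the $L^2$ estimate, I would combine the Aubin--Nitsche theorem, which gives
\[
  \nrm{u-u_h}_{\Omega} \lesssim h\, \nrm{(u-u_h,p-p_h)}_{\TT,\ast},
\]
with the bound on $\nrm{(u-u_h,p-p_h)}_{\TT,\ast}$ obtained in the first step (note that the quasi-best approximation lemma already delivers the stronger $\nrm{\cdot}_{\TT,\ast}$ norm on the left, so no additional work is needed to upgrade the norm). Substituting the approximation estimate then yields the desired factor of $h^{k+1}$.

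The only subtlety worth noting is to check that the regularity hypothesis $(u,p) \in [H^{k+1}(\Th)]^d \cap [H^1(\Omega)]^d \times H^k(\Th)\setminus\RR$ suffices for all invoked results: the quasi-best approximation lemma and Aubin--Nitsche theorem both require $(u,p)\in[H^2(\Th)]^d\times H^1(\Th)\setminus\RR$, which is implied for $k\geq 1$; for the $L^2$-estimate the additional $L^2$-$H^2$-regularity assumption is made explicitly. I expect no real obstacle here; the proof is a two-line assembly of existing results followed by a triangle inequality, with the main conceptual content residing in the preceding lemmas.
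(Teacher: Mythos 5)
Your proof is correct and is exactly the intended argument: the paper states this as a corollary without a written proof precisely because it is the direct concatenation of the quasi-best approximation lemma, the approximation estimate \eqref{eq:approx}, and the Aubin--Nitsche theorem, in the order you describe. The only point worth flagging is that the left-hand side of \eqref{eq:aprioribound1} as printed reads $\nrm{(u-v_h,p-q_h)}_{\TT}$ where it should be $\nrm{(u-u_h,p-p_h)}_{\TT}$; you implicitly and correctly work with the discrete solution throughout.
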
    
\begin{proof}
  This is a direct consequence of the previous Lemma and \cref{th::aubin}.
\end{proof}


\section{Numerical examples}\label{sec:num}

The method has been implemented using \texttt{NGSolve} \cite{ngsolve} and \texttt{NGSTrefftz} \cite{ngstrefftz} \footnote{Reproduction material is available in \cite{UHPEQG_2023}.}.

\subsection{Exact solution}

For the first numerical example, we consider the domain $\Omega =
(0,1)^d$ and the solution
\begin{align*}
\begin{cases}
  u=\Curl \zeta, \qquad\quad\,\,\,  p = \sin(\pi (x+y)) & \textrm{for } d = 2,\\
  u=\Curl (\zeta,\zeta,\zeta),\quad  p =  \sin(\pi (x+y+z)) + 8/\pi^3 & \textrm{for } d = 3,
\end{cases}
\end{align*}
with the potential 
    $\zeta = \cos(\pi x(1-x) y(1-y))$
and 
$\zeta = \cos(\pi x(1-x) y(1-y) z(1-z))$
for $d=2,3$, respectively. We then compute the numerical solution with
the given right hand side $f=-\nu \Delta u + \nabla p$ (using above
exact solution), $g=0$ and homogeneous Dirichlet boundary data.
Further, for simplicity, we fix $\nu=1$. The penalty parameter is
chosen as $\alpha = 20$.

In Figure \ref{fig:stokes} we compare the $L^2$-error of the Trefftz-DG
method \eqref{eq:discreteTrefftz} (marked with $\TT^k$) to the
solution of the standard DG method \eqref{eq:DGK} (marked with
$\IP^k$) for orders $k=2,3,4$. The error of the velocity and pressure
approximation with respect to the exact solution is measured in the
$L^2$-norm. As expected, see \eqref{eq:approx} and
\eqref{eq::aubinerror}, we observe that the solution of the Trefftz-DG
method converges with the same (optimal) order as the solution of the
standard DG method for both the velocity and the pressure error.

\begin{figure}[ht]
\centering
    \begin{tikzpicture}[scale=0.75,spy using outlines={circle, magnification=4, size=2cm, connect spies}]
    \begin{groupplot}[%
      group style={%
        group name={my plots},
        group size=2 by 2,
        vertical sep=1.5cm,
        horizontal sep=2cm,
      },
    legend style={
        legend columns=8,
        at={(-0.22,-0.15)},
        anchor=north,
        draw=none
    },
    every axis x label/.style={at={(0.5,0)},below,yshift=-.75em},
    xlabel={$h$},
    ymajorgrids=true,
    grid style=dashed,
    cycle list name=paulcolors2,
    ]      
    \nextgroupplot[ymode=log,xmode=log,x dir=reverse, ylabel={$\norm{u-u_h}_\Th$}]
        \foreach \k in {2,3,4}{
            \addplot+[discard if not={k}{\k},discard if not={trefftz}{True}] table [x=h, y=ul2error, col sep=comma] {ex/ex_2d.csv};
            \addplot+[discard if not={k}{\k},discard if not={trefftz}{False}] table [x=h, y=ul2error, col sep=comma] {ex/ex_2d.csv};
        }
        \addplot[domain=0.06:0.7] {exp(-3*ln(1/x)-3.0)};
        \addplot[domain=0.06:0.7] {exp(-4*ln(1/x)-3.5)};
        \addplot[domain=0.06:0.7] {exp(-5*ln(1/x)-4.0)};
    \nextgroupplot[ymode=log,xmode=log,x dir=reverse, ylabel={$\norm{p-p_h}_\Th$}]
        \foreach \k in {2,3,4}{
            \addplot+[discard if not={k}{\k},discard if not={trefftz}{True}] table [x=h, y=pl2error, col sep=comma] {ex/ex_2d.csv};
            \addplot+[discard if not={k}{\k},discard if not={trefftz}{False}] table [x=h, y=pl2error, col sep=comma] {ex/ex_2d.csv};
        }
        \addlegendimage{solid}
        \addplot[dashed, domain=0.06:0.7] {exp(-2*ln(1/x)-0.5)};
        \addplot[dashed, domain=0.06:0.7] {exp(-3*ln(1/x)-0.7)};
        \addplot[dashed, domain=0.06:0.7] {exp(-4*ln(1/x)-1.0)};

    \nextgroupplot[ymode=log,xmode=log,x dir=reverse, ylabel={$\norm{u-u_h}_\Th$}]
        \foreach \k in {2,3,4}{
            \addplot+[discard if not={k}{\k},discard if not={trefftz}{True}] table [x=h, y=ul2error, col sep=comma] {ex/ex_3d.csv};
            \addplot+[discard if not={k}{\k},discard if not={trefftz}{False}] table [x=h, y=ul2error, col sep=comma] {ex/ex_3d.csv};
        }
        \addplot[domain=0.12:0.7] {exp(-3*ln(1/x)-5.5)};
        \addplot[domain=0.12:0.7] {exp(-4*ln(1/x)-6.0)};
        \addplot[domain=0.12:0.7] {exp(-5*ln(1/x)-7.0)};
    \nextgroupplot[ymode=log,xmode=log,x dir=reverse, ylabel={$\norm{p-p_h}_\Th$}]
        \foreach \k in {2,3,4}{
            \addplot+[discard if not={k}{\k},discard if not={trefftz}{True}] table [x=h, y=pl2error, col sep=comma] {ex/ex_3d.csv};
            \addplot+[discard if not={k}{\k},discard if not={trefftz}{False}] table [x=h, y=pl2error, col sep=comma] {ex/ex_3d.csv};
        }
        \addlegendimage{solid}
        \addplot[dashed, domain=0.12:0.7] {exp(-2*ln(1/x)-1.7)};
        \addplot[dashed, domain=0.12:0.7] {exp(-3*ln(1/x)-2.9)};
        \addplot[dashed, domain=0.12:0.7] {exp(-4*ln(1/x)-3.5)};
        \legend{$\IT^2$,$\IP^2$,$\IT^3$,$\IP^3$,$\IT^4$,$\IP^4$,$\mathcal O(h^{k+1})$,{$\mathcal O(h^{k})$ for $k=2,3,4$},}

    \end{groupplot}
    \node[anchor=south] at ($(my plots c1r1.north east)!0.5!(my plots c2r1.north west)$){Example in 2 dimensions};
    \node[anchor=south] at ($(my plots c1r2.north east)!0.5!(my plots c2r2.north west)$){Example in 3 dimensions};
\end{tikzpicture}
\vspace{-0.5cm}
\caption{Numerical results for the DG and Trefftz-DG method for the two dimensional example on the top row and three dimensional example on the bottom. The gray (solid and dashed) lines indicate the expected convergence rates.}
    \label{fig:stokes}
\end{figure}
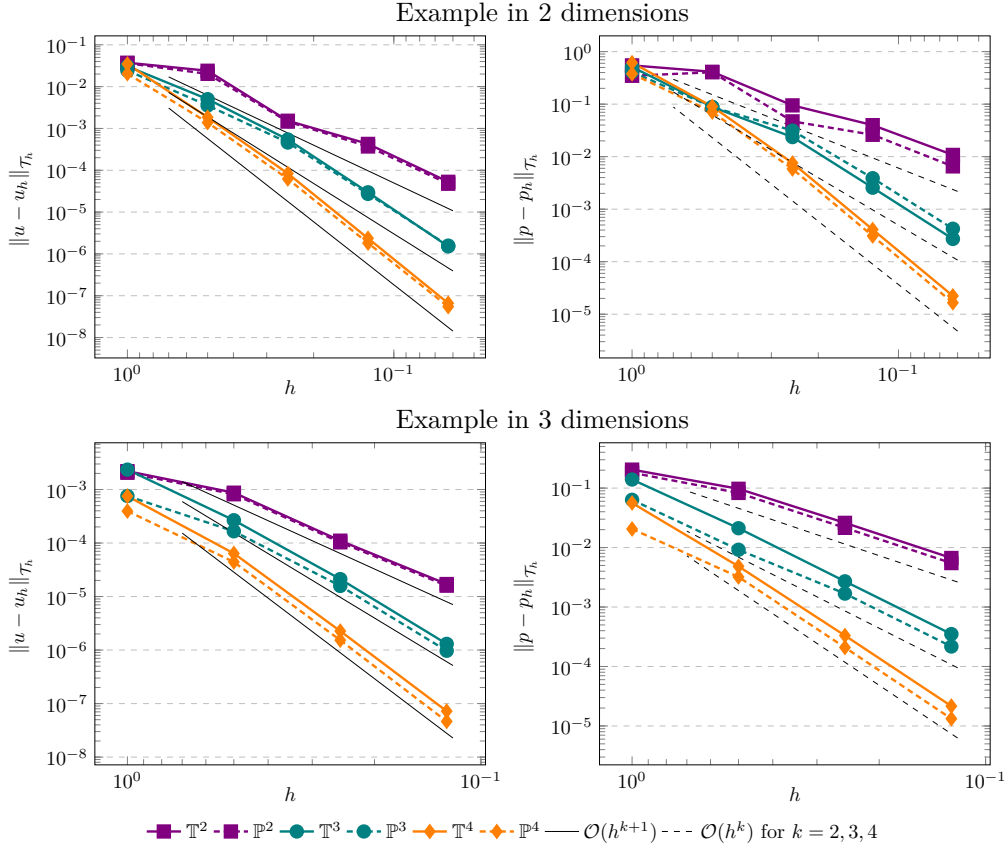

\subsection{Moffatt eddies}
In \cite{moffatt_1964} Moffatt presents an example on a wedge that
produces an infinite amount of eddies that differ greatly in
magnitude, making it a challenging numerical example. This benchmark
has also been considered in \cite{ainsworthparker}.
We consider a triangular domain, shown in \Cref{fig:moffatt} on the right, with non-homogeneous Dirichlet boundary conditions on one side, and zero source term, $f=(0,0)$.
To implement the non-homogeneous Dirichlet boundary conditions we add to the right hand side of our Trefftz-DG discretization \eqref{eq:discreteTrefftz} the following boundary terms
\begin{align*}
    \frac{\alpha\nu}{h} (u_D, v)_{\partial\Omega} - (\nu u_D, \partial_n v_h)_{\partial\Omega},
\end{align*}
where $u_D$ is the Dirichlet boundary data.
We impose an inflow on the part of the boundary given by the line $y=0$ with a
parabolic velocity profile and a no-slip condition (homogeneous Dirichlet boundary) is imposed on the
remaining boundary, i.e. the boundary data $u_D$ is given by
\begin{equation*}
    u_D(x,0)=
      (1-x^2,0)
        \quad \text{for} \quad -1\leq x\leq 1,\quad
        u_D=(0,0)\quad\text{on}\quad \partial\Omega\setminus (-1,1)\times\{0\}.
\end{equation*}
The domain
$\Omega$ is given by a wedge with a sharp angle of approximately
$2\alpha=36.87^\circ$ on the corner opposite of the boundary with the
inflow. 
The velocity solution comprises an endless series of swirls, with each subsequent swirl being approximately 400 times less intense than the one preceding it.
The series of swirls converges to $(0,-2)$
Additionally, the pressure field exhibits two point singularities at $(-1,0)$ and $(1,0)$.

In \Cref{fig:moffatt} we show the numerical results on a mesh with 28 elements for $k=10$.
We see that the Trefftz-DG method is able to capture the sharp corner and the eddies, resolving four to five eddies, encompassing a scale range of $10^{13}$.

\begin{figure}[ht]
    \centering
    \vspace*{-0.5cm}
    \begin{tikzpicture}
    \begin{scope}
    \node {\includegraphics[width=0.39\textwidth]{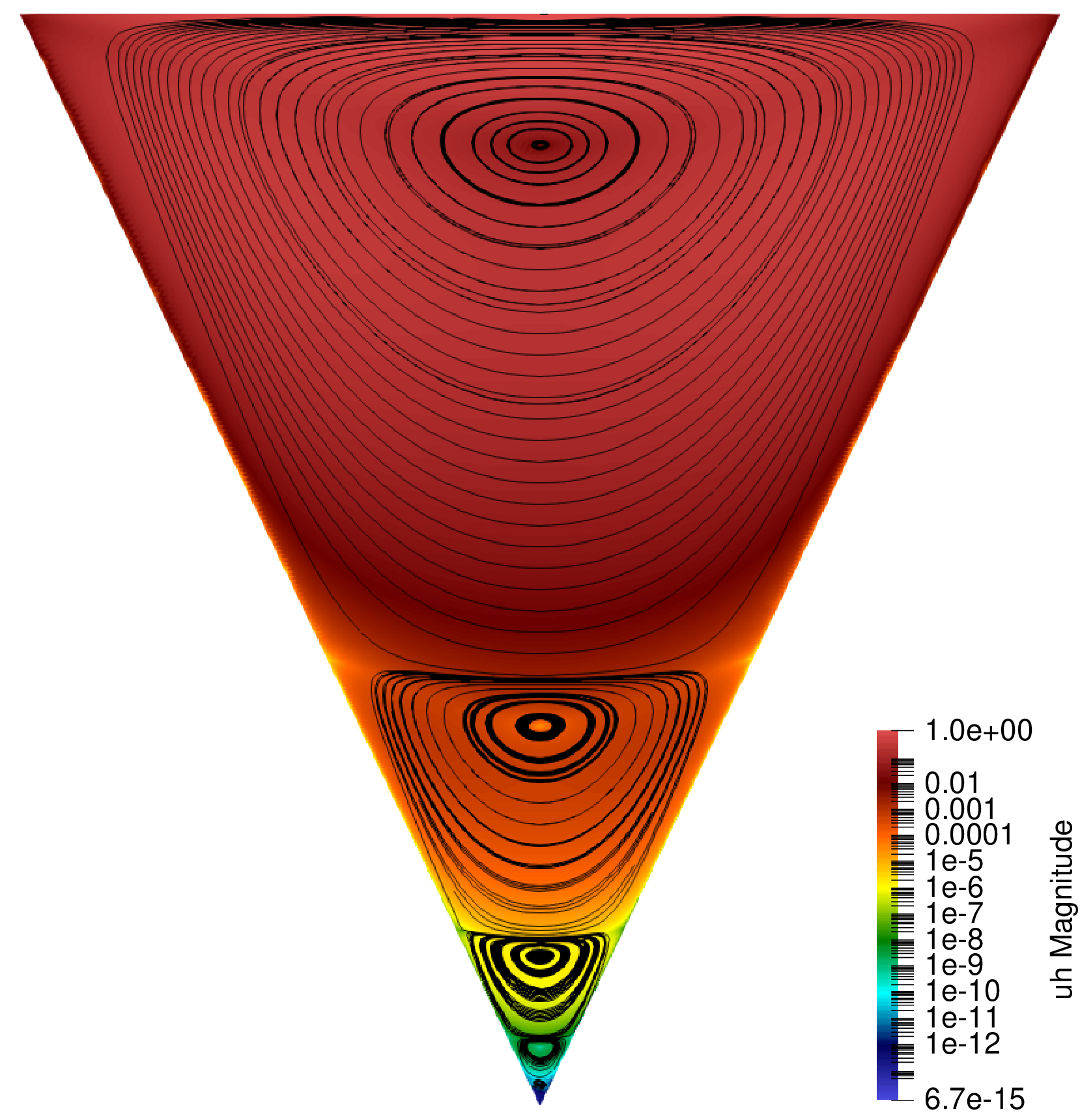} };
    \node at (-1.7,-1.7) {\includegraphics[width=0.12\textwidth]{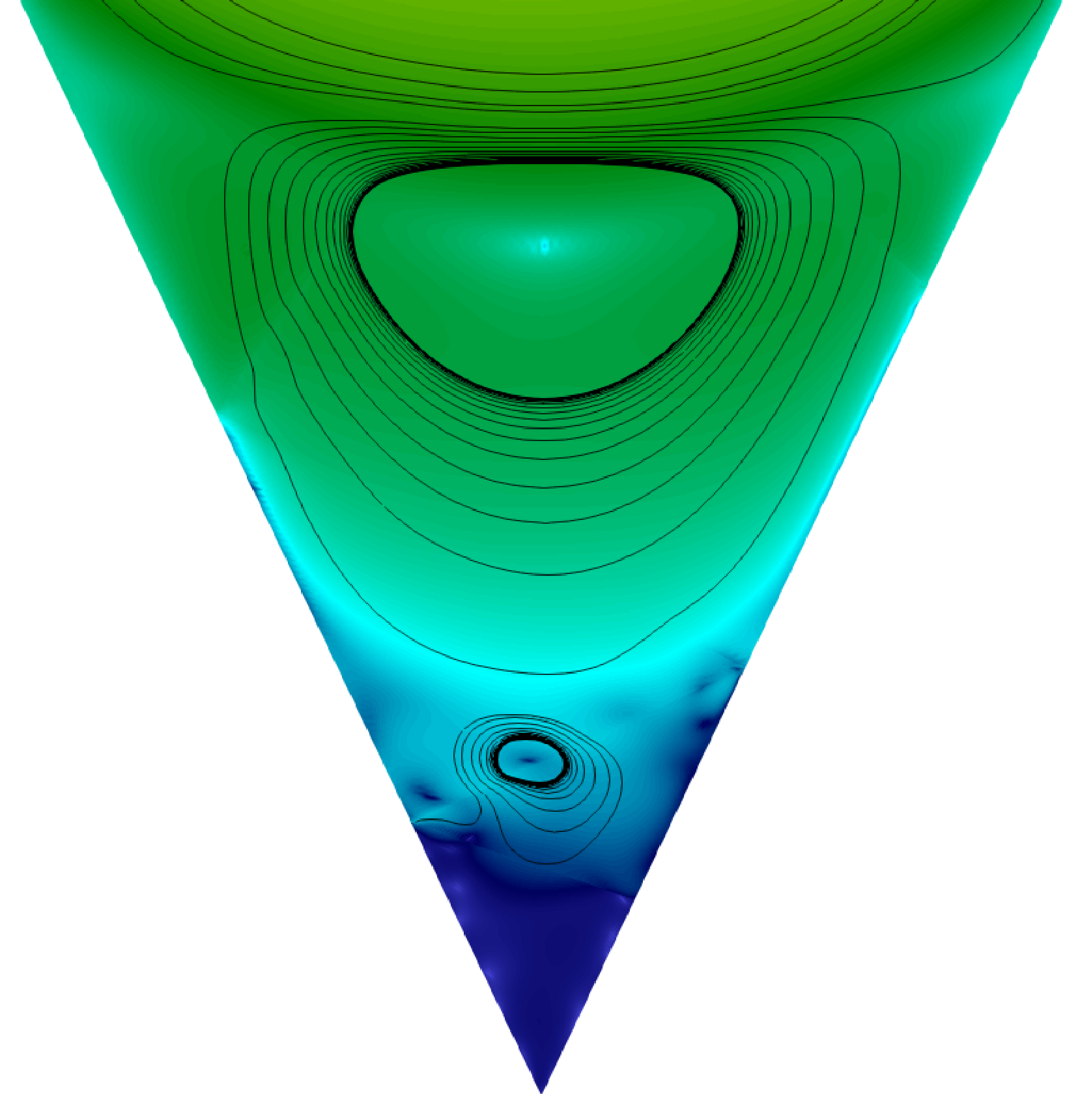} };
    \end{scope}
    \begin{scope}[xshift=0.5\textwidth]
    \node {\includegraphics[width=0.45\textwidth]{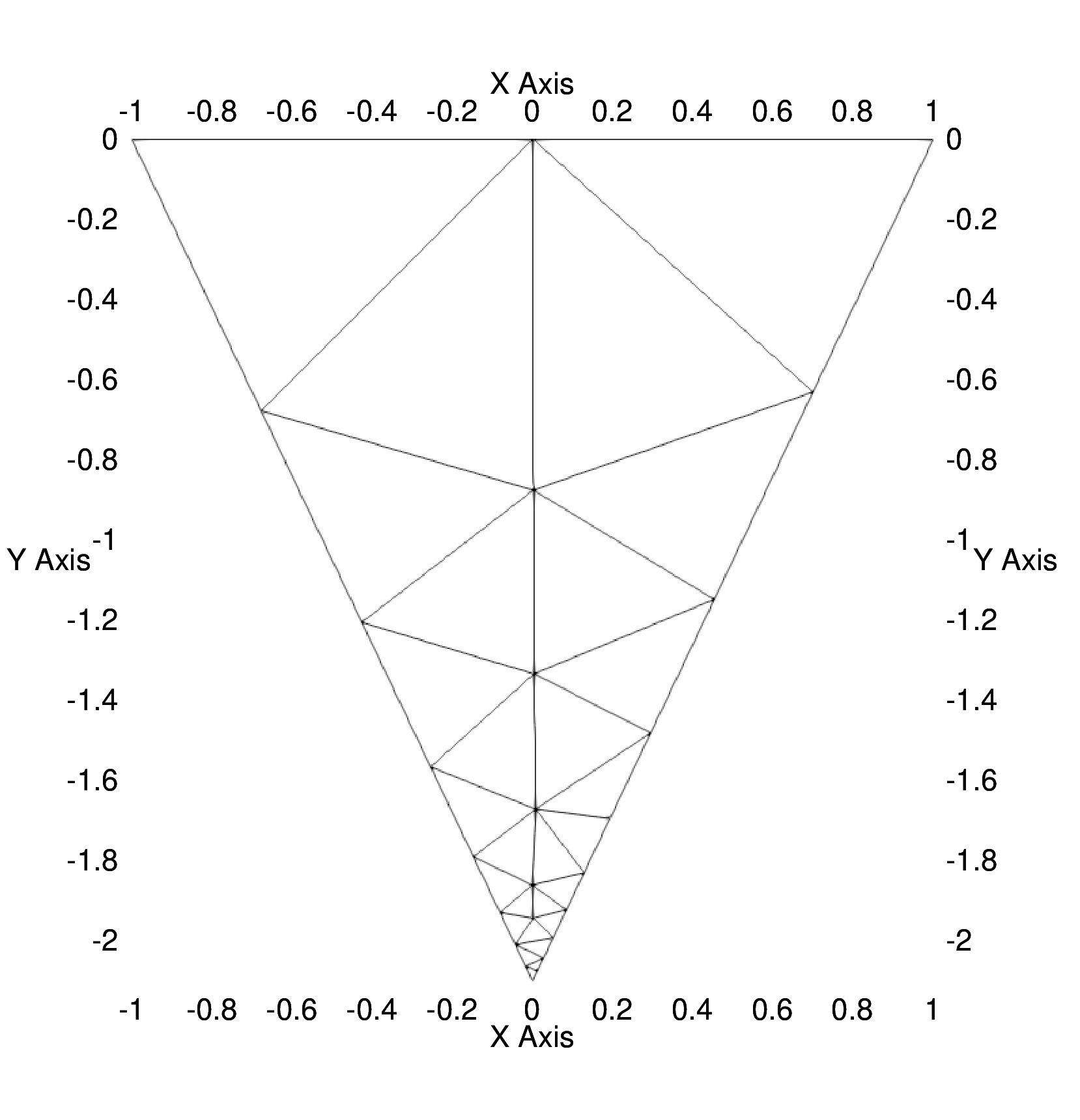} };
    \end{scope}
    \end{tikzpicture}
    \vspace*{-0.5cm}
    \caption{Moffatt eddies with $k=10$ on the left, including a zoom on the bottom eddies on the bottom left. On the right we show the computational mesh.}\label{fig:moffatt}
\end{figure}

\subsection{Comparison of computational effort to different Stokes discretizations} \label{sec:numbercrunching}

In this section, we want to compare the Trefftz-DG method with other popular discretizations for the Stokes problem with respect to the number of unknowns and their sparsity patterns that finally determine the computational costs to a large extent.
For a comparison, we only consider simplicial meshes and mostly restrict to different variants of DG methods.
We focus on a comparison of the total number of degrees of freedom (\texttt{ndof}), the \texttt{ndof} that remains after elimination of all interior unknowns (i.e. after static condensation) denoting them as \emph{coupling} \texttt{ndof} (\texttt{ncdof}), and the resulting sparsity pattern of the methods (after static condensation) in terms of the non-zero entries in the resulting sparse system matrices (\texttt{nnze}).

We include several hybrid Discontinuous Galerkin (HDG) methods, where the velocity couplings across element interfaces stemming from DG-terms are avoided by introducing additional facet unknowns for the velocities.
As an exception, we also include the Taylor-Hood method (of arbitrarily high order) which has continuous velocity and pressure approximations. All methods share the same order of convergence in a discrete $H^1$-type norm for the velocity and an $L^2$-type norm for the pressure. 
The list of methods presented here is by no means complete, for additional material on HDG methods we refer to e.g. \cite{zbMATH05837531,COCKBURN2014221,L_MTH_2010}, and for further exploration, we refer to the relevant literature, including the books \cite{di2011mathematical,brezzi,zbMATH06625569} and the references therein.

First, we consider discontinuous Galerkin methods, with basis functions that are discontinuous across inter-element boundaries, including the method presented in this work.  
The methods can be considered the closest relatives to the Trefftz-DG method as the inter-element regularity of the solution is solely enforced in the bilinear form. 
This makes them very flexible in the mesh choice, allowing easy for e.g. polygonal meshes.
We consider the following:
\begin{itemize}
  \item \underline{Standard DG}: \\[0.4ex] As a representative of a ``standard'' Discontinuous Galerkin method we consider the interior penalty discretization from 
  \cref{sec:underlyingDG}, cf. especially \eqref{eq:dgstokes}, respectively from \cite[Section 6.1.5]{di2011mathematical} with $u_h \in [\PP^k]^d$ and $p_h \in \PP^{k-1}$, cf. \cref{sec:underlyingDG}.
  \item \underline{Trefftz-DG}: \\ The Trefftz-DG method, presented in this work, uses the same variational formulation as the standard DG method, but with a different finite element space so that $(u_h,p_h) \in \TT^k$ with $\TT^k$ as in \eqref{eq:Trefftzconditions}. 
  The method only has unknowns on the elements and static condensation cannot be applied.
  \item \underline{Solenoidal DG}: \\[0.4ex] In \cite{montlaur08,montlaur2010discontinuous} the velocity space is reduced to element-wise divergence-free polynomial functions. For the enforcement of normal-continuity, a facet variable for the pressure is then introduced leading to exactly divergence-free discrete solutions. Velocities $u_h \in \{ v \in [\PP^k(\Th)]^d | \Div v|_T = 0, T \in \Th \} $ and pressures $p_h \in \PP^k(\Fh)$ are assumed to appear in the global linear system, i.e. no static condensation is applied. Volume pressure unknowns do not appear in the global linear system (neither as \texttt{ndof} nor \texttt{ncdof}), but can be reconstructed in element-by-element post-processing.
  \item \underline{Rhebergen-Wells-HDG}: \\[0.4ex] In \cite{RhebergenWells} Rhebergen and Wells introduce $d+1$ scalar facet variables of degree $k$ to enforce normal continuity and the weak continuity of the velocity vector. All element interior unknowns for velocity and pressure can then be eliminated by static condensation.
\end{itemize}
Next, we will consider $H(\Div)$-conforming methods, i.e. methods based on a discrete velocity space that is normal-continuous across element interfaces.
We recall that $\BDM^{k}= [\PP^k]^d \cap H(\Div;\Omega)$ is the Brezzi-Douglas-Marini space, see e.g. \cite[Section 2.3.1]{brezzi}.
\begin{itemize}
  \item \underline{$H(\Div)$-DG}: \\[0.4ex] By considering discrete velocities from $H(\Div)$-conforming finite element spaces the pressure facet variable can be avoided while still obtaining exactly solenoidal solutions (after computation). This has been proposed in \cite{cockburn2007note} with $u_h \in \BDM^k$ and $p_h \in \PP^{k-1}$. 
  \item \underline{High order divergence-free (\texttt{hodf}) $H(\Div)$-DG}: \\[0.4ex] Exploiting the a-priori knowledge that velocity solutions in $H(\Div)$-DG are exactly divergence-free allows to reduce the velocity basis functions a-priorily to those with a piecewise constant divergence, $u_h \in \{v \in [\PP^k]^d \mid \Div v \in \PP^0 \}$, cf. \cite{zaglmayr2006high}. Note that the basis functions in $\BDM^k$ generating a divergence in $\PP^0$ are not local and remain in the system. As pressure couplings across element boundaries vanish in an $H(\Div)$-conforming setting we can correspondingly remove the pressure variables of higher order so that w.r.t. the pressure only $p_h \in \PP^0$ remains for the solution of the global linear system. Higher order pressures can again be reconstructed element-by-element in a simple post-processing and are not considered in this comparison. Compared to the previous $H(\Div)$-DG method this step can be considered as a version of static condensation. 
  \item \underline{$H(\Div)$-HDG}: \\[0.4ex] In the Rhebergen-Wells-HDG method the normal component is made continuous essentially twice, through a pressure facet variable and as one component of the weak continuity stemming from the viscosity term. This redundancy can be removed. In~\cite{LS_CMAME_2016} an $H(\Div)$-conforming space for the velocity is considered, $u_h \in \BDM^k$ with $p_h \in \PP^{k-1}$ and a tangential vector function on the facets $\hat{u} \in \PP_\tau^k(\Fh) := \{ v \in L^2(\Fh)  \mid v|_F \in \PP^k(F), v \cdot n_F |_F = 0 \}$ is introduced to avoid direct couplings between neighboring elements.
  Let $\BDM^\ell = \BDM_{\circ}^\ell \oplus \BDM_{\partial \Th}^\ell$ be the decomposition of the $\BDM^\ell$ space into \emph{interior normal-bubbles} $\BDM_{\circ}^\ell = \{v \in \BDM^\ell \mid v \cdot n|_{\partial \Th} = 0\}$ and \emph{interface} functions $\BDM_{\partial \Th}^\ell$ with $\Div \BDM_{\partial \Th}^\ell = \PP^0$, cf. \cite{zaglmayr2006high}.
  Then, the interior normal-bubbles (which determine the higher-order divergence of the velocity) can be statically condensed alongside the higher-order pressures. Only the facet variables (normal \texttt{dof} of the $\BDM_{\partial \Th}^k$ space and the tangential vector functions in $\PP_\tau^k(\Fh)$) and the piecewise constant pressure functions in $\PP^0(\Th)$ remain in the global linear system.
  \item \underline{Projected jumps modification (\texttt{pj}) of the $H(\Div)$-HDG method}: \\[0.4ex] An improvement of the $H(\Div)$-HDG method is obtained when reducing the polynomial degree of the tangential facet variables by one degree using a \emph{projected jumps} modification of the hybrid interior penalty formulation, cf. \cite{LS_CMAME_2016}.
  \item \underline{Highest order discontinuous facet modification (\texttt{hodc}) of the $H(\Div)$-HDG method}: \\[0.4ex] A second improvement of the $H(\Div)$-HDG method is obtained when normal continuity is relaxed by one degree leading to a special velocity space $\BDM^{\star,k} = \{ v \in [\PP^k]^d \mid (\jump{v \cdot n},w)_F = 0, \forall w \in \PP^{k-1}(F), F \in \Fh\}$. Then, the facet dofs of $\BDM^{\star,k}$ coincide with those of $\BDM_{\partial \Th}^{k-1}$ and together with $\PP_\tau^{k-1}(\Fh)$ they suffice to couple velocity functions on neighboring elements, cf. \cite{LLS_SIAM_2017,LLS_ESAIM_2019}. 
\end{itemize}    
Finally, we also include an $H^1$-conforming method, here we consider:
\begin{itemize}
    \item \underline{Taylor-Hood}: \\[0.4ex] The Taylor-Hood method is an $H^1$-conforming Galerkin method with $u_h \in [\PP^k \cap C^0(\Omega)]^d$ and $p_h \in \PP^{k-1} \cap C^0(\Omega)$ using bilinear and linear form as in the continuous weak formulation. 
    By $\PP^\ell = \PP_{\circ}^\ell \oplus \PP_{\partial \Th}^\ell$ we denote the decomposition of the polynomial space $\PP^\ell$ into \emph{interior bubbles} $\PP_{\circ}^\ell = \{v \in \PP^\ell \mid v|_{\partial \Th} = 0\}$ and \emph{interface} polynomials $\PP_{\partial \Th}^\ell$.
    For the Taylor-Hood element, the interior bubbles can be eliminated by static condensation and only the interface polynomials remain in the reduced linear system. 
\end{itemize}    

In \cref{tab:comparison.methods} we summarized the considered methods in a table with an emphasis on the used discretization spaces and those that remain after static condensation. 
In \cref{fig:drawdofs} we illustrate the different degrees of freedom for some of the methods in two dimensions and $k=4$.

\begin{table}[!h]
\begin{tabular}{r c c c c}
\toprule
method & \multicolumn{4}{c@{~}}{discretization space}  \\ 
\cmidrule[0.4pt](r{0.125em}){1-1}%
\cmidrule[0.4pt](lr{0.25em}){2-5}%
& \multicolumn{2}{c@{~}@{~}}{velocity}  
& \multicolumn{2}{c@{~}}{pressure}  \\ 
\cmidrule[0.4pt](r{0.125em}){1-1}%
\cmidrule[0.4pt](lr{0.125em}){2-3}%
\cmidrule[0.4pt](lr{0.25em}){4-5}%
& element & facet & element & facet \\ 
\cmidrule[0.4pt](r{0.125em}){1-1}%
\cmidrule[0.4pt](lr{0.125em}){2-2}%
\cmidrule[0.4pt](lr{0.125em}){3-3}%
\cmidrule[0.4pt](lr{0.125em}){4-4}%
\cmidrule[0.4pt](lr{0.25em}){5-5}%
\multicolumn{5}{l@{~}}{ \textbf{$L^2$-conforming} } \\ 
\cmidrule[0.4pt](r{0.125em}){1-1}%
Standard DG & $[\PP^k]^d$ & --- & $\PP^{k-1}$ & --- \\ 
Solenoidal DG & $[\PP^k]^d \!\cap\! \{\!\, \Div v\! =\! 0 \!\,\}$ & --- & --- & $\PP^{k}(\Fh)$ \\ 
Rhebergen-Wells-HDG & $[\PP^k]^d$ & $[\PP^k(\Fh)]^d$ & $\PP^{k-1}$ & $\PP^k(\Fh)$ \\
(reduced) &  --- & $[\PP^k(\Fh)]^d$ & --- & $\PP^k(\Fh)$ \\ 
Trefftz-DG & $\TT^k$ & --- & $\TT^k$ & --- \\
\midrule
\multicolumn{5}{l@{~}}{ \textbf{$H(\Div)$-conforming} } \\ 
\cmidrule[0.4pt](r{0.125em}){1-1}%
$H(\Div)$-DG\hphantom{(\texttt{hodf})} & $\BDM^k$ & --- & $\PP^{k-1}$ & --- \\
$H(\Div)$-DG(\texttt{hodf}) & $\BDM^k \!\!\cap\! \{\! \Div v \!\in\! \PP^0 \!\}$ & --- & $\PP^{0}$ & --- \\ 
$H(\Div)$-HDG\hphantom{ (\texttt{hodc})} & $\BDM^k$ & $\PP_\tau^k(\Fh)$ & $\PP^{k-1}$ & --- \\ 
(reduced) & $\BDM_{\partial \Th}^k$ & $\PP_\tau^k(\Fh)$ & $\PP^{0}$ & --- \\[0.4ex] 
$H(\Div)$-HDG (\texttt{pj})\hphantom{\texttt{dc}} & $\BDM^k$ & $\PP_\tau^{k-1}(\Fh)$ & $\PP^{k-1}$ & --- \\ (reduced) & $\BDM_{\partial \Th}^k$ & $\PP_\tau^{k-1}(\Fh)$ & $\PP^{0}$ & --- \\[0.4ex] 
$H(\Div)$-HDG (\texttt{hodc}) & ${\BDM}^{\star,k}$ & $\PP_\tau^{k-1}(\Fh)$ & $\PP^{k-1}$ & --- \\ (reduced) & ${\BDM}_{\partial \Th}^{k-1}$ & $\PP_\tau^{k-1}(\Fh)$ & $\PP^{0}$ & --- \\[0.4ex] 
\midrule
\multicolumn{5}{l@{~}}{ \textbf{$H^1$-conforming} } \\ 
\cmidrule[0.4pt](r{0.125em}){1-1}%
Taylor-Hood & \multicolumn{2}{c@{~}@{~}}{$[\PP^k \cap C^0]^d$} & \multicolumn{2}{c@{~}}{$\PP^{k-1} \cap C^0$} \\ 
(reduced) & \multicolumn{2}{c@{~}@{~}}{$[\PP_{\partial \Th}^k \cap C^0]^d$} & \multicolumn{2}{c@{~}}{$\PP_{\partial \Th}^{k-1} \cap C^0$} \\
\bottomrule
\end{tabular}
\vspace{1em}
\caption{Overview of the considered methods with respect to the used discretization spaces; before and after static condensation (if applicable).}
\label{tab:comparison.methods}
\vspace{-2em}
\end{table}

\input{draw_dofs}

For the experiment, we consider two fixed domains, the unit square $(0,1)^2$ and the unit cube $(0,1)^3$, with fixed meshes with 34 elements and 59 facets in 2D and 492 elements and 1087 facets in 3D and only vary the polynomial degree~$k$. 

\begin{figure}[ht]
  \begin{center}    
  {
      \begin{tikzpicture}[scale=0.75,spy using outlines={circle, magnification=4, size=2cm, connect spies}]
      \begin{groupplot}[%
        group style={%
          group name={my plots},
          group size=3 by 2,
          vertical sep=0.5cm,
          horizontal sep=0.7cm,
        },
        title style={at={(0.01,0.94)},
        anchor=north west,
        draw=none,fill=white},
        every axis x label/.style={
          scale=0.8,
        at={(0.95,-0.05)},
      anchor=west,
      },
      legend style={
          legend columns=4,
          at={(-0.6,-0.1)},
          anchor=north,
          draw=none
      },
      xlabel={$k$},
      ymajorgrids=true,
      grid style=dashed,
      cycle list name=christophcolors,
      width=6.6cm,
      ]     
      \nextgroupplot[ymode=log,xmode=linear, title={\texttt{ndofs} (2D)}]
      \node [above left] at (rel axis cs:1.05,-0.05) {\includegraphics[width=2cm]{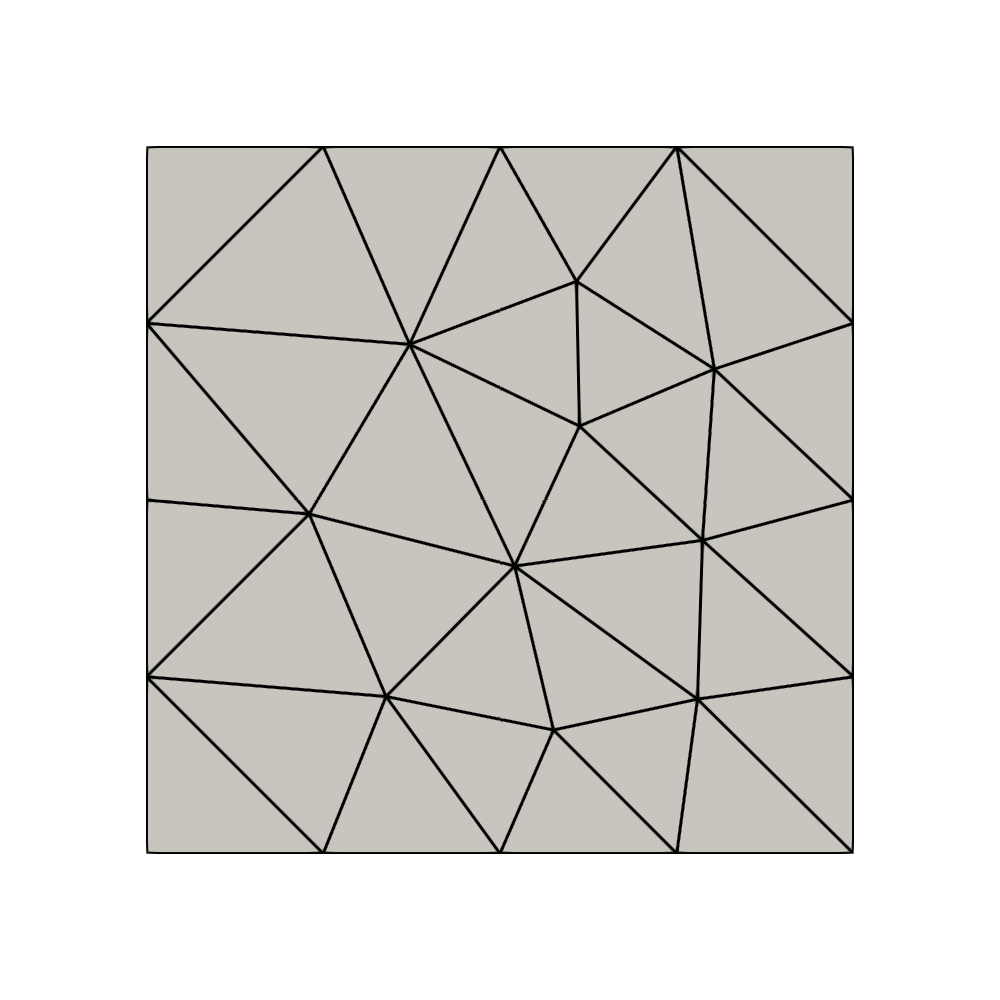}};      
          \foreach \k in {CGTH, DG, HdivHDGpjmp, TDG}{
              \addplot+[discard if not={method}{\k}] table [x=order, y=ndofs, col sep=comma] {ex/ex_compare_2D.csv};
              }

      \nextgroupplot[ymode=log,xmode=linear, title={\texttt{ncdofs} (2D)}]
      \node [above left] at (rel axis cs:1.05,-0.05) {\includegraphics[width=2cm]{ex/mesh2D.png}};      
      \foreach \k in {CGTH, DG, HdivHDGpjmp, TDG}{
              \addplot+[discard if not={method}{\k}] table [x=order, y=ncdofs, col sep=comma] {ex/ex_compare_2D.csv};
          }
      \nextgroupplot[ymode=log,xmode=linear, title={\texttt{nnzes} (2D)}]
      \node [above left] at (rel axis cs:1.05,-0.05) {\includegraphics[width=2cm]{ex/mesh2D.png}};      
      \foreach \k in {CGTH, DG, HdivHDGpjmp, TDG}{
          \addplot+[discard if not={method}{\k}] table [x=order, y=nnzes, col sep=comma] {ex/ex_compare_2D.csv};
      }
      \nextgroupplot[ymode=log,xmode=linear, title={\texttt{ndofs} (3D)}]
      \node [above left] at (rel axis cs:1.05,-0.05) {\includegraphics[width=2cm]{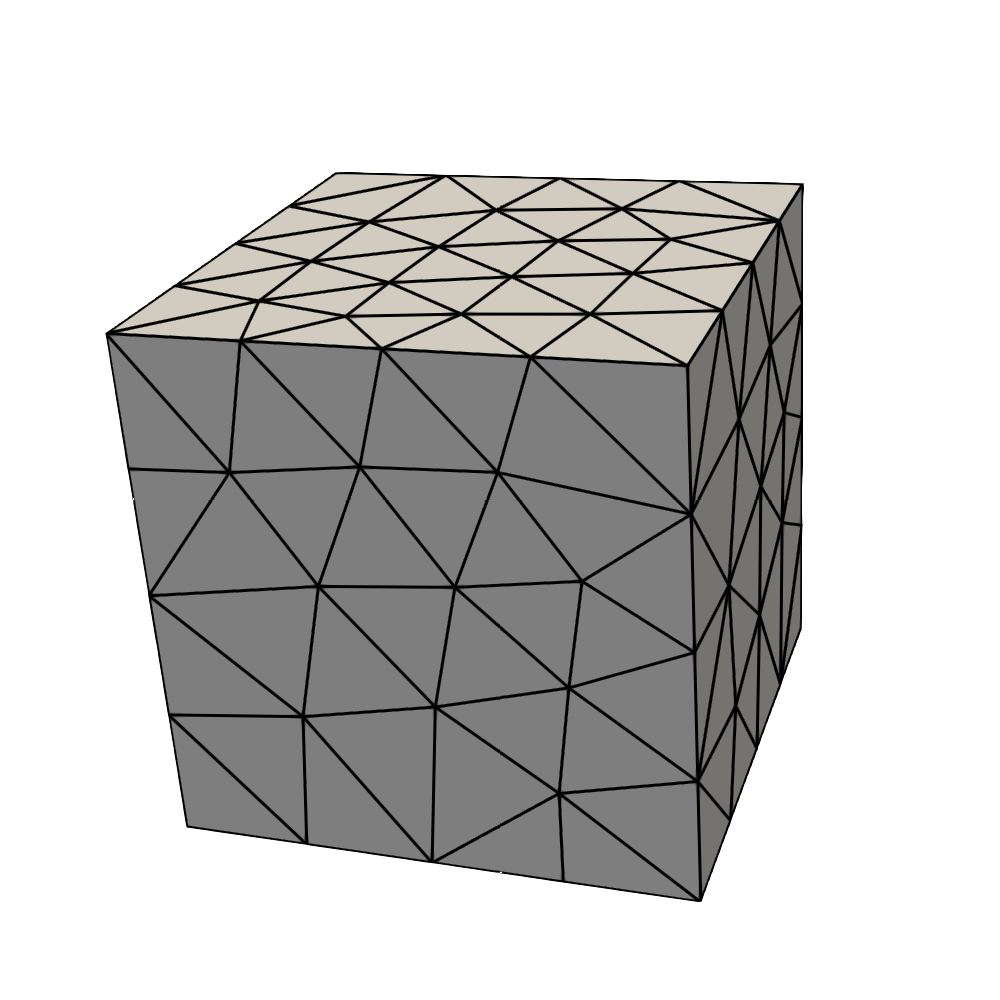}};      
          \foreach \k in {CGTH, DG, HdivHDGpjmp, TDG}{
              \addplot+[discard if not={method}{\k}] table [x=order, y=ndofs, col sep=comma] {ex/ex_compare_3D.csv};
          }
      \nextgroupplot[ymode=log,xmode=linear, title={\texttt{ncdofs} (3D)}]
      \node [above left] at (rel axis cs:1.05,-0.05) {\includegraphics[width=2cm]{ex/mesh3D.png}};      
      \foreach \k in {CGTH, DG, HdivHDGpjmp, TDG}{
              \addplot+[discard if not={method}{\k}] table [x=order, y=ncdofs, col sep=comma] {ex/ex_compare_3D.csv};
          }
      \nextgroupplot[ymode=log,xmode=linear, title={\texttt{nnzes} (3D)}]
      \node [above left] at (rel axis cs:1.05,-0.05) {\includegraphics[width=2cm]{ex/mesh3D.png}};      
      \foreach \k in {CGTH, DG, HdivHDGpjmp, TDG}{
              \addplot+[discard if not={method}{\k}] table [x=order, y=nnzes, col sep=comma] {ex/ex_compare_3D.csv};
          }
          \addlegendimage{solid}
          \legend{Taylor-Hood, 
          Standard DG~~, 
          $H(\mathrm{div})$-HDG (\texttt{hodc})~~, 
          Trefftz-DG~~}
          \end{groupplot}
  \end{tikzpicture}}
  \end{center} 
  \caption{Comparison of \texttt{ndof}, \texttt{ncdof} and \texttt{nnze} for a selection of methods in \cref{sec:numbercrunching} for the 2D and 3D Stokes problem on the displayed mesh.}
      \label{fig:stokes:dofscompare}
  \end{figure}

For a high level overview we first focus only on results for four methods, the Taylor-Hood method, the standard DG method, the $H(\Div)$-DG method (with \texttt{hodc}) and the Trefftz-DG method in terms of plots in \cref{fig:stokes:dofscompare}.
In the appendix in \cref{app:moretables}, in \cref{tab:ndof2d,tab:ncdof2d,tab:nnze2d,tab:ndof3d,tab:ncdof3d,tab:nnze3d} we show the corresponding results for all methods mentioned above.
We observe that the Trefftz-DG method brings a significant improvement over the Standard DG method, especially in terms of non-zero entries in the matrix.
Compared to the Hybrid DG methods it is competitive in terms of \texttt{ndof} and \texttt{ncdof} and only slightly worse in terms of \texttt{nnze} when compared to the optimized HDG variants.
It even compares quite well to the popular Taylor-Hood method for higher orders.

\section{Conclusion \& Outlook}\label{sec13} In this paper we
introduced a new Stokes discretization based on local basis functions
that are \emph{Trefftz}, i.e. that fulfill the Stokes equations
pointwise (w.r.t. to an approximated r.h.s.). This leads to a strong
reduction of unknowns compared to standard DG methods. To construct
the corresponding basis we use the embedded Trefftz-DG method which allows us to deal with inhomogeneous forces and sources. The method
is analyzed and a priori error bounds are derived. To the best of our
knowledge, this is the first Trefftz-DG discretization for the Stokes
problem. Crucial and new components in the analysis are a splitting of
the pressure unknowns into higher order and lower order parts and the
analysis for inhomogeneous forces and sources. Especially the latter
will also be useful for the analysis of (embedded) Trefftz-DG methods
for other PDEs. 

A topic that we have not addressed in this work is pressure robustness. Although discrete solutions of the presented scheme will be pointwise divergence-free, discrete solutions will in general not be normal-continuous or \emph{pressure robust}. We leave improvements of the proposed scheme in that regard for future research.

Due to the generic construction of the Trefftz-DG space using the embedded Trefftz-DG approach, the presented approach allows to explore extensions to Oseen-type or non- and stationary Navier-Stokes problems where the Trefftz reduction of the underlying DG space is applied to each linear problem arising within potential linearization steps. Further investigations and analysis are left for future work.
\section{Acknowledgement}

PL acknowledges the support by the Austrian Science Fund (FWF) via the
standalone project “High-Fidelity and Efficient Direct Aeroacoustic
Simulations” P35931-N. CL and PS were funded by DFG SFB 1456 project 432680300.

\begin{appendices}

\section{Selected proof}
\begin{cor}\label{cor:lapsurj}
    The operator $\Delta : \calP^k \to \calP^{k-2}$ is surjective.
\end{cor}
\begin{proof}
    Considering $k<2$ we have $\calP^{k-2} = \{0\}$ and the result is trivial. Now take $k\geq 2$. 
    We can write $\calP^k$ as the following algebraic direct sum
    $$\calP^k = \calH^k \oplus |x|^2 \calP^{k-2}, \quad \text{for} \quad k\geq 2,$$
    where $\calH^k$ is the space of harmonic polynomials of degree $k$, i.e. $\calH^k=\ker(\Delta: \calP^k \to \calP^{k-2})$, see \cite[Proposition 5.5]{harmonicfcttheory}.
    Now consider the map $F : \calP^{k-2} \to \calP^{k-2}$ given by $F(p) = \Delta(|x|^2 p)$. 
    Since $|x|^2 p\in \calP^{k}$ and $F$ has a trivial kernel by the above decomposition, the result follows.
\end{proof}


\section{Further results of computational comparison in \cref{sec:numbercrunching}} \label{app:moretables}

\begin{table}[ht!]
   \begin{tabular}{rNNNNNNNNNN}
    \toprule
    $k$
    & \multicolumn{1}{r}{\rotatebox{90}{\begin{minipage}{2.05cm}Taylor-Hood \end{minipage}}}
    & \multicolumn{1}{r}{\rotatebox{90}{\begin{minipage}{2.05cm}Standard DG \\ \cite[Sect. 6.1.5]{di2011mathematical}\end{minipage}}}
    & \multicolumn{1}{r}{\rotatebox{90}{\begin{minipage}{2.05cm}Solenoidal DG \cite{montlaur08}\end{minipage}}}
    & \multicolumn{1}{r}{\rotatebox{90}{\begin{minipage}{2.05cm}$H(\text{div})$-{DG} \\ \cite{cockburn2007note}\end{minipage}}}
    & \multicolumn{1}{r}{\rotatebox{90}{\begin{minipage}{2.05cm}$H(\text{div})$-{DG} \\ \texttt{hodf}\end{minipage}}}
    & \multicolumn{1}{r}{\rotatebox{90}{\begin{minipage}{2.05cm}Rhebergen-Wells HDG \cite{RhebergenWells}\end{minipage}}}
    & \multicolumn{1}{r}{\rotatebox{90}{\begin{minipage}{2.05cm}$H(\text{div})$-{HDG} \\ \cite{LS_CMAME_2016} \end{minipage}}}
    & \multicolumn{1}{r}{\rotatebox{90}{\begin{minipage}{2.05cm}$H(\text{div})$-{HDG} \\ \texttt{pj} \cite{LS_CMAME_2016} \end{minipage}}}
    & \multicolumn{1}{r}{\rotatebox{90}{\begin{minipage}{2.05cm}$H(\text{div})$-{HDG} \\ \texttt{hodc} \cite{LS_CMAME_2016,LLS_SIAM_2017,LLS_ESAIM_2019} \end{minipage}}}
    & \multicolumn{1}{r}{\rotatebox{90}{\begin{minipage}{2.05cm}Trefftz-DG\end{minipage}}}
     \\
    \midrule
1&	 \multicolumn{1}{r}{---}&	 238&	 288&	 \ca{152}&	 \ca{152}&	592&	  270&	 211& 254&	 204\\
2&	 \ca{196}&	 510&	 483&	 381&	 245&	1041&  558&	 499&  542&	340\\
3&	 441&	 884&	 712&	 712&	 \ca{372}&	1592&  948&	 889&  932&	476\\
4&	 788&	1360&	 975&	1145&	 \ca{533}&	2245& 1440&	1381& 1424&	612\\
5&	1237&	1938&	1272&	1680&	 \ca{728}&	3000& 2034&	1975& 2018&	748\\
6&	1788&	2618&	1603&	2317&	 957&	3857& 2730&	2671& 2714&	\ca{ 884}\\
7&	2441&	3400&	1968&	3056&	1220&	4816&	3528&	3469& 3512&	\ca{1020}\\
8&	3196&	4284&	2367&	3897&	1517&	5877&	4428&	4369& 4412&	\ca{1156}\\
9&	4053&	5270&	2800&	4840&	1848&	7040&	5430&	5371& 5414&	\ca{1292}\\
10&	5012&	6358&	3267&	5885&	2213&	8305&	6534&	6475& 6518&	\ca{1428}\\
\bottomrule
\end{tabular}
\caption{Comparison of the total number of degrees of freedom (\texttt{ndof}) for the methods considered in \cref{sec:numbercrunching} in two dimensions (possibly without (higher order) pressure functions that may be obtained from post-processing).} \label{tab:ndof2d}
\end{table}

\begin{table}[ht!]
   \begin{tabular}{rNNNNNNNNNN}
    \toprule
    $k$
    & \multicolumn{1}{r}{\rotatebox{90}{\begin{minipage}{2.05cm}Taylor-Hood \end{minipage}}}
    & \multicolumn{1}{r}{\rotatebox{90}{\begin{minipage}{2.05cm}Standard DG \\ \cite[Sect. 6.1.5]{di2011mathematical}\end{minipage}}}
    & \multicolumn{1}{r}{\rotatebox{90}{\begin{minipage}{2.05cm}Solenoidal DG \cite{montlaur08}\end{minipage}}}
    & \multicolumn{1}{r}{\rotatebox{90}{\begin{minipage}{2.05cm}$H(\text{div})$-{DG} \\ \cite{cockburn2007note}\end{minipage}}}
    & \multicolumn{1}{r}{\rotatebox{90}{\begin{minipage}{2.05cm}$H(\text{div})$-{DG} \\ \texttt{hodf}\end{minipage}}}
    & \multicolumn{1}{r}{\rotatebox{90}{\begin{minipage}{2.05cm}Rhebergen-Wells HDG \cite{RhebergenWells}\end{minipage}}}
    & \multicolumn{1}{r}{\rotatebox{90}{\begin{minipage}{2.05cm}$H(\text{div})$-{HDG} \\ \cite{LS_CMAME_2016} \end{minipage}}}
    & \multicolumn{1}{r}{\rotatebox{90}{\begin{minipage}{2.05cm}$H(\text{div})$-{HDG} \\ \texttt{pj} \cite{LS_CMAME_2016} \end{minipage}}}
    & \multicolumn{1}{r}{\rotatebox{90}{\begin{minipage}{2.05cm}$H(\text{div})$-{HDG} \\ \texttt{hodc} \cite{LS_CMAME_2016,LLS_SIAM_2017,LLS_ESAIM_2019} \end{minipage}}}
    & \multicolumn{1}{r}{\rotatebox{90}{\begin{minipage}{2.05cm}Trefftz-DG\end{minipage}}}
     \\
    \midrule
1& \multicolumn{1}{c}{---} &238 &	288&	\ca{152}&	\ca{152}&	354&	270&	211&	168&	204\\
2&\ca{196}  &	510&	483&	381&	245&	531&	388&	329&	286&	340 \\
3&373 &	884&	712&	712&	\ca{372}&	708&	506&	447&	404&	476 \\
4&550 &	1360&	975&	1145&	533&	885&	624&	565&	\ca{522}&	612 \\
5& 727 &	1938&	1272&	1680&	728&	1062&	742&	683&	\ca{640}&	748 \\
6& 904 &	2618&	1603&	2317&	957&	1239&	860&	801&	\ca{758}&	884 \\
7&1081 &	3400&	1968&	3056&	1220&	1416&	978&	919&	\ca{876}&	1020 \\
8&1258 &	4284&	2367&	3897&	1517&	1593&	1096&	1037&	\ca{994}&	1156 \\
9&1435 &	5270&	2800&	4840&	1848&	1770&	1214&	1155&	\ca{1112}&	1292 \\
10&1612 &	6358&	3267&	5885&	2213&	1947&	1332&	1273&	\ca{1230}&	1428 \\
\bottomrule
\end{tabular}
\caption{Comparison of the number of coupling degrees of freedom (\texttt{ncdof}) for the methods considered in \cref{sec:numbercrunching} in two dimensions (possibly without (higher-order) pressure functions that may be obtained from post-processing).} \label{tab:ncdof2d}
\end{table}

\begin{table}[ht!]
   \begin{tabular}{r@{~~}N@{~~}N@{~~}N@{~~}N@{~~}N@{~~}N@{~~}N@{~~}N@{~~}N@{~~}N}
    \toprule
    $k$
    & \multicolumn{1}{r}{\rotatebox{90}{\begin{minipage}{2.05cm}Taylor-Hood \end{minipage}}}
    & \multicolumn{1}{r}{\rotatebox{90}{\begin{minipage}{2.05cm}Standard DG \\ \cite[Sect. 6.1.5]{di2011mathematical}\end{minipage}}}
    & \multicolumn{1}{r}{\rotatebox{90}{\begin{minipage}{2.05cm}Solenoidal DG \cite{montlaur08}\end{minipage}}}
    & \multicolumn{1}{r}{\rotatebox{90}{\begin{minipage}{2.05cm}$H(\text{div})$-{DG} \\ \cite{cockburn2007note}\end{minipage}}}
    & \multicolumn{1}{r}{\rotatebox{90}{\begin{minipage}{2.05cm}$H(\text{div})$-{DG} \\ \texttt{hodf}\end{minipage}}}
    & \multicolumn{1}{r}{\rotatebox{90}{\begin{minipage}{2.05cm}Rhebergen-Wells HDG \cite{RhebergenWells}\end{minipage}}}
    & \multicolumn{1}{r}{\rotatebox{90}{\begin{minipage}{2.05cm}$H(\text{div})$-{HDG} \\ \cite{LS_CMAME_2016} \end{minipage}}}
    & \multicolumn{1}{r}{\rotatebox{90}{\begin{minipage}{2.05cm}$H(\text{div})$-{HDG} \\ \texttt{pj} \cite{LS_CMAME_2016} \end{minipage}}}
    & \multicolumn{1}{r}{\rotatebox{90}{\begin{minipage}{2.05cm}$H(\text{div})$-{HDG} \\ \texttt{hodc} \cite{LS_CMAME_2016,LLS_SIAM_2017,LLS_ESAIM_2019} \end{minipage}}}
    & \multicolumn{1}{r}{\rotatebox{90}{\begin{minipage}{2.05cm}Trefftz-DG\end{minipage}}}
     \\
    \midrule
1&	\multicolumn{1}{c}{---} & 5760	&6092	&2836	&2836	&9468	&5024	&2979	&\ca{1706}	&4320 \\
2&	\ca{4844}	 & 25920	&17595	&13923	&7907	&21303	&10692	&7595	&5462	&12000 \\
3&	13624	 & 76800	&	39152&43088	&18388	&37872	&18464	&14315	&\ca{11322}	&23520 \\
4&	26612	 & 180000	&	74975&103675	&37363	&59175	&28340	&23139	&\ca{19286}	&38880 \\
5&	43808	 & 362880	&	129996&212724	&68636	&85212	&40320	&34067	&\ca{29354}	&58080 \\
6&	65212	 & 658560	&	209867&390971	&116731	&115983	&54404	&47099	&\ca{41526}	&81120 \\
7&	90824	 & 1105920	&	320960&662848	&186892	&151488	&70592	&62235	&\ca{55802}	&108000 \\
8&	120644 & 1749600	&	470367&1056483	&285083	&191727	&88884	&79475	&\ca{72182}	&138720 \\
9&	154672 & 2640000	&	665900&1603700	&417988	&236700	&109280	&98819	&\ca{90666}	&173280 \\
10&	192908 & 3833280	&	916091&2340019	&593011	&286407	&131780	&120267	&\ca{111254}	&211680 \\
\bottomrule
\end{tabular}
\caption{Comparison of the number of non-zero entries (\texttt{nnze}) for the methods considered in \cref{sec:numbercrunching} in two dimensions.} \label{tab:nnze2d}
\end{table}

\begin{table}[ht!]
   \begin{tabular}{rNNNNNNNNNN}
    \toprule
    $k$
    & \multicolumn{1}{r}{\rotatebox{90}{\begin{minipage}{2.05cm}Taylor-Hood \end{minipage}}}
    & \multicolumn{1}{r}{\rotatebox{90}{\begin{minipage}{2.05cm}Standard DG \\ \cite[Sect. 6.1.5]{di2011mathematical}\end{minipage}}}
    & \multicolumn{1}{r}{\rotatebox{90}{\begin{minipage}{2.05cm}Solenoidal DG \cite{montlaur08}\end{minipage}}}
    & \multicolumn{1}{r}{\rotatebox{90}{\begin{minipage}{2.05cm}$H(\text{div})$-{DG} \\ \cite{cockburn2007note}\end{minipage}}}
    & \multicolumn{1}{r}{\rotatebox{90}{\begin{minipage}{2.05cm}$H(\text{div})$-{DG} \\ \texttt{hodf}\end{minipage}}}
    & \multicolumn{1}{r}{\rotatebox{90}{\begin{minipage}{2.05cm}Rhebergen-Wells HDG \cite{RhebergenWells}\end{minipage}}}
    & \multicolumn{1}{r}{\rotatebox{90}{\begin{minipage}{2.05cm}$H(\text{div})$-{HDG} \\ \cite{LS_CMAME_2016} \end{minipage}}}
    & \multicolumn{1}{r}{\rotatebox{90}{\begin{minipage}{2.05cm}$H(\text{div})$-{HDG} \\ \texttt{pj} \cite{LS_CMAME_2016} \end{minipage}}}
    & \multicolumn{1}{r}{\rotatebox{90}{\begin{minipage}{2.05cm}$H(\text{div})$-{HDG} \\ \texttt{hodc} \cite{LS_CMAME_2016,LLS_SIAM_2017,LLS_ESAIM_2019} \end{minipage}}}
    & \multicolumn{1}{r}{\rotatebox{90}{\begin{minipage}{2.05cm}Trefftz-DG\end{minipage}}}
     \\
    \midrule
1 &	\multicolumn{1}{c}{---}&6396&8673&\ca{3753}&\ca{3753}&19440&10275&5927&7689&5904\\
2 &	\ca{2811}&16728&19314&11442&8490&42816&24486&17964&20607&13284\\
3 &	\ca{9036}&34440&35470&25630&16774&77920&47370&38674&42198&23616\\
4 &	\ca{21085}&61500&58125&48285&29589&126720&80895&70025&74430&36900\\
5 &	\ca{40926}&99876&88263&81375&47919&191184&127029&113985&119271&53136\\
6 &	\ca{70527}&151536&126868&126868&72748&273280&187740&172522&178689&72324\\
7 &	111856&218448&174924&186732&105060&374976&264996&247604&254652&\ca{94464}\\
8 &	166881&302580&233415&262935&145839&498240&360765&341199&349128&\ca{119556}\\
9 &	237570&405900&303325&357445&196069&645040&477015&455275&464085&\ca{147600}\\
10 &	325891&530376&385638&472230&256734&817344&615714&591800&601491&\ca{178596}\\
\bottomrule
\end{tabular}
\caption{Comparison of total number of degrees of freedom (\texttt{ndof}) for the methods considered in \cref{sec:numbercrunching} in three dimensions (possibly without (higher order) pressure functions that may be obtained from post-processing).} \label{tab:ndof3d}
\end{table}

\begin{table}[ht!]
   \begin{tabular}{rNNNNNNNNNN}
    \toprule
    $k$
    & \multicolumn{1}{r}{\rotatebox{90}{\begin{minipage}{2.05cm}Taylor-Hood \end{minipage}}}
    & \multicolumn{1}{r}{\rotatebox{90}{\begin{minipage}{2.05cm}Standard DG \\ \cite[Sect. 6.1.5]{di2011mathematical}\end{minipage}}}
    & \multicolumn{1}{r}{\rotatebox{90}{\begin{minipage}{2.05cm}Solenoidal DG \cite{montlaur08}\end{minipage}}}
    & \multicolumn{1}{r}{\rotatebox{90}{\begin{minipage}{2.05cm}$H(\text{div})$-{DG} \\ \cite{cockburn2007note}\end{minipage}}}
    & \multicolumn{1}{r}{\rotatebox{90}{\begin{minipage}{2.05cm}$H(\text{div})$-{DG} \\ \texttt{hodf}\end{minipage}}}
    & \multicolumn{1}{r}{\rotatebox{90}{\begin{minipage}{2.05cm}Rhebergen-Wells HDG \cite{RhebergenWells}\end{minipage}}}
    & \multicolumn{1}{r}{\rotatebox{90}{\begin{minipage}{2.05cm}$H(\text{div})$-{HDG} \\ \cite{LS_CMAME_2016} \end{minipage}}}
    & \multicolumn{1}{r}{\rotatebox{90}{\begin{minipage}{2.05cm}$H(\text{div})$-{HDG} \\ \texttt{pj} \cite{LS_CMAME_2016} \end{minipage}}}
    & \multicolumn{1}{r}{\rotatebox{90}{\begin{minipage}{2.05cm}$H(\text{div})$-{HDG} \\ \texttt{hodc} \cite{LS_CMAME_2016,LLS_SIAM_2017,LLS_ESAIM_2019} \end{minipage}}}
    & \multicolumn{1}{r}{\rotatebox{90}{\begin{minipage}{2.05cm}Trefftz-DG\end{minipage}}}
     \\
    \midrule
1 &	\multicolumn{1}{c}{---}&6396&8673&\ca{3753}&\ca{3753}&13044&10275&5927&4165&5904 \\
2 &	\ca{2811}&16728&19314&11442&8490&26088&20058&13536&10893&13284 \\
3 &	\ca{9036}&34440&35470&25630&16774&43480&33102&24406&20882&23616 \\
4 &	\ca{19609}&61500&58125&48285&29589&65220&49407&38537&34132&36900 \\
5 &	\ca{34530}&99876&88263&81375&47919&91308&68973&55929&50643&53136 \\
6 &	\ca{53799}&151536&126868&126868&72748&121744&91800&76582&70415&72324 \\
7 &	\ca{77416}&218448&174924&186732&105060&156528&117888&100496&93448&94464 \\
8 &	\ca{105381}&302580&233415&262935&145839&195660&147237&127671&119742&119556 \\
9 &	\ca{137694}&405900&303325&357445&196069&239140&179847&158107&149297&147600 \\
10 &	\ca{174355}&530376&385638&472230&256734&286968&215718&191804&182113&178596 \\
\bottomrule
\end{tabular}
\caption{Comparison of number of coupling degrees of freedom (\texttt{ncdof}) for the methods considered in \cref{sec:numbercrunching} in three dimensions (possibly without (higher order) pressure functions that may be obtained from post-processing).} \label{tab:ncdof3d}
\end{table}

\makeatletter \@rot@twosidetrue \makeatother
\begin{sidewaystable}[ht!]
   \begin{tabular}{r@{~~}N@{~K~~}N@{~K~~}N@{~K~~}N@{~K~~}N@{~K~~}N@{~K~~}N@{~K~~}N@{~K~~}N@{~K~~}N@{~K~~}}
    \toprule
    $k$
    & \multicolumn{1}{r}{\rotatebox{90}{\begin{minipage}{2.05cm}Taylor-Hood \end{minipage}}}
    & \multicolumn{1}{r}{\rotatebox{90}{\begin{minipage}{2.05cm}Standard DG \\ \cite[Sect. 6.1.5]{di2011mathematical}\end{minipage}}}
    & \multicolumn{1}{r}{\rotatebox{90}{\begin{minipage}{2.05cm}Solenoidal DG \cite{montlaur08}\end{minipage}}}
    & \multicolumn{1}{r}{\rotatebox{90}{\begin{minipage}{2.05cm}$H(\text{div})$-{DG} \\ \cite{cockburn2007note}\end{minipage}}}
    & \multicolumn{1}{r}{\rotatebox{90}{\begin{minipage}{2.05cm}$H(\text{div})$-{DG} \\ \texttt{hodf}\end{minipage}}}
    & \multicolumn{1}{r}{\rotatebox{90}{\begin{minipage}{2.05cm}Rhebergen-Wells HDG \cite{RhebergenWells}\end{minipage}}}
    & \multicolumn{1}{r}{\rotatebox{90}{\begin{minipage}{2.05cm}$H(\text{div})$-{HDG} \\ \cite{LS_CMAME_2016} \end{minipage}}}
    & \multicolumn{1}{r}{\rotatebox{90}{\begin{minipage}{2.05cm}$H(\text{div})$-{HDG} \\ \texttt{pj} \cite{LS_CMAME_2016} \end{minipage}}}
    & \multicolumn{1}{r}{\rotatebox{90}{\begin{minipage}{2.05cm}$H(\text{div})$-{HDG} \\ \texttt{hodc} \cite{LS_CMAME_2016,LLS_SIAM_2017,LLS_ESAIM_2019} \end{minipage}}}
    & \multicolumn{1}{r}{\rotatebox{90}{\begin{minipage}{2.05cm}Trefftz-DG\end{minipage}}}
     \\
    \midrule
1&\multicolumn{1}{c}{---}&379& 466&213&213&1007&602&194&\caK{87}&325\\
2&\caK{230}&2570& 2389&1527&1114&4027&2336&1054&650&1643\\
3&\caK{1344}&10819& 8302&6632&4157&11186&6410&3470&2461&5193\\
4&\caK{4698}&34317& 22877&21459&12302&25168&14334&8702&6666&12679\\
5&\caK{12248}&90124& 53947&57152&30870&49328&27995&18384&14791&26291\\
6&\caK{26581}&206755& 113671&132483&68555&87695&49659&34531&28739&48707\\
7&50914&428440& 219869&276739&138614&144965&81968&59534&\caK{50788}&83091\\
8&89091&820062& 397546&533075&260242&226508&127942&96160&\caK{83596}&133096\\
9&145586&1472764& 680592&962328&460123&338364&190979&147556&\caK{130199}&202860\\
10&225502&2510244& 1113667&1647295&774162&487245&274854&217246&\caK{194009}&297007\\
\bottomrule
\end{tabular}
\caption{Comparison of number of non-zero entries (\texttt{nnze}) for the methods considered in \cref{sec:numbercrunching} in three dimensions.} \label{tab:nnze3d}
\end{sidewaystable}




\end{appendices}

\clearpage
\bibliography{literature}

\end{document}